\documentclass[reqno,11pt]{amsart}
\setlength{\textwidth}{\paperwidth}
\addtolength{\textwidth}{-2in}
\calclayout
\usepackage{amsmath}
\usepackage{amsthm}
\usepackage{amssymb}
\usepackage{amsbsy}
\usepackage{amsfonts}
\usepackage{amstext}
\usepackage{amscd}
\usepackage{tikz}
\usepackage{graphicx}
\usepackage{verbatim}
\usepackage{bm}
\usepackage{commath,mathtools,setspace}
\usepackage{paralist}
\usepackage{extarrows}
\usepackage{color}

\usepackage[T1]{fontenc}
\usepackage{graphicx}
\usepackage{blindtext} 
\usepackage{geometry}
\geometry{
	left=28mm,
	right=25mm,
	top=30mm,
	bottom=30mm,
	footskip=8mm
}

\usepackage[colorlinks=true,linkcolor=blue,citecolor=blue]{hyperref}
\usepackage[colorinlistoftodos,prependcaption,textsize=small]{todonotes}

\definecolor{red}{RGB}{255,0,0}
\definecolor{green}{RGB}{0,100,0}
\definecolor{blue}{RGB}{0,0,255}

\newtheorem{theorem}{Theorem}[section]
\newtheorem{lemma}[theorem]{Lemma}

\newtheorem{corollary}[theorem]{Corollary}
\newtheorem{proposition}[theorem]{Proposition}
\newtheorem{notation}[theorem]{Notation}

\theoremstyle{remark}
\newtheorem{remark}[theorem]{Remark}
\newtheorem{definition}[theorem]{Definition}
\newtheorem{example}[theorem]{Example}


\newcommand{\mesh}{\mathop{\rm mesh}}
\newcommand{\lmesh}{\mathop{\rm lmesh}}

\renewcommand{\P}{\mathbb{P}}

\newcommand{\R}{\mathbb{R}}
\newcommand{\C}{\mathbb{C}}
\newcommand{\Z}{\mathbb{Z}}

\newcommand{\N}{\mathbb{N}}



\newcommand{\monicpols}{\mathbb P_n^*}
\newcommand{\monicpolreal}{\mathbb P_n^*(\mathbb R)}

\newcommand{\lag}{\widehat {L}}
\newcommand{\jac}{\widehat {J}}
\newcommand{\bes}{\widehat {B}}

\newcommand{\cc}{\mathbb{C}}

\newcommand{\nn}{\mathbb{N}}

\newcommand{\pp}{\mathbb{P}}

\newcommand{\rr}{\mathbb{R}}

\newcommand{\zz}{\mathbb{Z}}

\newcommand{\FF}{\mathcal{F}}

\newcommand{\raising}[2]{\left(#1\right)^{\overline{#2}}}
\newcommand{\falling}[2]{\left(#1\right)^{\underline{#2}}}

\newmuskip\pFqmuskip

\newcommand*\pFqN[6][8]{%
  \begingroup 
  \pFqmuskip=#1mu\relax
  \mathcode`\,=\string"8000
  \begingroup\lccode`\~=`\,
  \lowercase{\endgroup\let~}\pFqcomma
  {}_{#2}F_{#3}{\left(\genfrac..{0pt}{}{#4}{#5};#6\right)}%
  \endgroup
}
\newcommand{\pFqcomma}{\mskip\pFqmuskip}

\newcommand*\pFq[5]{%
 {\ }_{#1}{\mathcal F}_{#2}{\left(\genfrac..{0pt}{}{#3}{#4};#5\right)}%
}

\newcommand*\HGF[5]{%
 {\ }_{#1} F_{#2}{\left(\genfrac..{0pt}{}{#3}{#4};#5\right)}%
}

\newcommand*\HGP[3]{%
 \mathcal{H}_{#1}{\left[\genfrac..{0pt}{1}{#2}{#3}\right]}%
}

\title[Finite Free Convolution of Hypergeometric polynomials]{Real roots of hypergeometric polynomials via finite free convolution} 
\author[A. Mart\'{\i}nez-Finkelshtein]{Andrei Mart\'{\i}nez-Finkelshtein}

\address[AMF]{Department of Mathematics, Baylor University, TX, USA, and Department of Mathematics, University of Almer\'{\i}a, Spain}

\email{A\_Martinez-Finkelshtein@baylor.edu}

\author[R.~Morales]{Rafael Morales}

\address[RM]{Department of Mathematics, Baylor University, TX, USA}

\email{rafael\_morales2@baylor.edu}

\author[D.~Perales]{Daniel Perales}

\address[DP]{Department of Mathematics, Texas A\&M University, TX, USA}

\email{daniel.perales@tamu.edu}

\date{\today}

\keywords{Hypergeometric polynomials; Finite free convolution; Free probability; Zeros}

\subjclass[2020]{Primary:  33C20; Secondary: 33C45, 42C05, 46L54}

\begin{document}

\begin{abstract}
We examine two binary operations on the set of algebraic polynomials, known as  multiplicative and additive finite free convolutions,  specifically in the context of hypergeometric polynomials. We show that the representation of a hypergeometric polynomial as a finite free convolution of more elementary blocks, combined with the preservation of the real zeros and interlacing by the free convolutions, is an effective tool that allows us to analyze when all roots of a specific hypergeometric polynomial are real. Moreover, the known limit behavior of finite free convolutions allows us to write the asymptotic zero distribution of some hypergeometric polynomials as free convolutions of Marchenko-Pastur, reciprocal Marchenko-Pastur, and free beta laws, which has an independent interest within free probability.
\end{abstract}

\maketitle

\tableofcontents

\section{Introduction}

The definition of the general hypergeometric function ${ }_{i+1} F_j$ with $i+1$ numerator and $j$ denominator parameters is well known, see 
\eqref{hypergeoSeries} below. If one of the numerator parameters is equal to a negative integer, say $ -n$, with $n \in \mathbb{N}$, then the series terminates and is a polynomial of degree $n$. The natural question that arises in connection with any polynomial is the location and behavior of its zeros, in particular, when they are all real (``real-rootedness''). If all its zeros are real, we also want to know additional properties like positivity/negativity, interlacing, and monotonicity with respect to the parameters. This has importance, among other matters, in the study of the Laguerre-Pólya class $\mathcal L$-$\mathcal P$ of entire functions (functions that can be obtained as a limit, uniformly on compact subsets of $\mathbb{C}$, of a sequence of negative-real-rooted polynomials, see \cite{MR4442978}).

The connection between ${ }_{i+1} F_j$ hypergeometric polynomials and some classical families of polynomials, in many cases orthogonal, yields straightforward answers to these questions, at least for small values of $i$ and $j$. But when $i\ge 1$ and $j\ge 2$, the problem becomes more difficult due to the limited number of tools that allow us to investigate the zero location.  

One of such tools is the idea of transformations acting on the space of polynomials. Several such transformations have ``zero-mapping'' properties,  the differentiation acting on polynomials with all real roots being the simplest example. Further examples of such linear transformations can be constructed within the theory of multiplier sequences, originated in \cite{polya1914uber}, see also \cite{MR1113932,MR0568321,MR1483603,MR3105315,MR3207673,MR2218995}. In the classical theory, multiplier sequences that preserve real zeros are characterized by means of certain analytic properties of their generating functions (e.g., that they belong to the $\mathcal L$-$\mathcal P$ class).  

Several of these transformations can also be written as a ``convolution'' of a given polynomial with another polynomial or function. Again, many results can be traced back to the work of Szeg\H{o}, Schur, Walsh, and others. Recently, several such transformations have been rediscovered as a finite analogue of free probability, named generically as finite free convolution of polynomials \cite{MR4408504}. They have a number of very useful properties, not only preserving real-rootedness, but also interlacing, monotonicity and even asymptotic distribution of zeros under certain conditions.

The connection between these polynomial convolutions and free probability is revealed in the asymptotic regime, when we consider the zero-counting measure (also known in this context as the empirical root distribution) of a polynomial of degree $n$ and let the degree tend to $\infty$ to obtain a limiting measure. Then the finite free convolution of polynomials turns into a free convolution of measures. This interesting connection has benefited both areas of research. On the one hand, the several relations between measures studied in free probability can guide our intuition on the type of relation that their polynomial analogues could satisfy, as well as provide a simple way to compute limiting measures using free probability. On the other hand, some properties that are clear in the context of discrete measures (such as zero-counting measures of polynomials) give a concrete explanation to phenomena that are not apparent when working with absolutely continuous measures.

In this paper, we examine two of such finite free convolutions, namely the multiplicative $\boxtimes_n$ (also known as Schur--Szeg\H{o} composition) and the additive $\boxplus_n$ convolutions, specifically in the context of hypergeometric polynomials. The main finding is that these operations have natural realizations in the class of these polynomials, providing an additional tool for studying their zeros. 
To make it more precise, as well as to provide a guide to facilitate the reader to navigate the unavoidable abundance of formulas and identities, we give a brief outline of the main highlights of this paper next.

We introduce all the necessary notation and facts in Section~\ref{sec:preliminaries}. In particular, given two complex  polynomials
\[
p(x)=\sum_{i=0}^n x^{n-i}(-1)^i e_i(p) \qquad \text{and} \qquad  q(x)=\sum_{i=0}^n x^{n-i}(-1)^i   e_i(q)
\]
of degree  $n$, the finite free additive convolution, $p\boxplus_n q$, and the finite free multiplicative  convolution, $p\boxtimes_n q$, are defined as:
$$ [p\boxplus_n q](x):=\sum_{k=0}^n x^{n-k}(-1)^k \sum_{i+j=k}\frac{(n-i)!(n-j)!}{n!(n-k)!}\, e_i(p) e_j(q), $$
and
$$
[p\boxtimes_n q](x): =\sum_{k=0}^n x^{n-k}(-1)^k \binom{n}{k}^{-1}  e_i(p) e_k(q)  .
$$

These operations are closed on the set of polynomials with all real positive roots, making them a useful tool to study real-rooted polynomials, their root interlacing, and root separation; see Subsections \ref{sec:freeconvol}  and \ref{sec:realrootednessfreeconv} below for details.

Our goal is to study the effect of these operations on the roots of hypergeometric polynomials
$$
\pFq{i+1}{j}{-n, \bm a}{\bm b }{x} :=\raising{\bm b}{n} \HGF{i+1}{j}{-n , \bm a}{\bm b }{x}= \raising{\bm b}{n} \, \sum_{k=0}^n \frac{\raising{-n}{k} \raising{\bm a}{k}}{\raising{\bm b}{k}} \frac{x^k}{k !}, 
$$
where $\bm a =(a_1, \dots, a_i)\in \R^i$ and $\bm b =(b_1, \dots, b_j)\in \R^j$ are vectors of parameters, and $\raising{\bm a}{k}:=\raising{a_1}{k}\raising{a_2}{k} \dots \raising{a_s}{k}$, with $\raising{a}{k}:=a(a+1)\dots(a+k-1)$, denotes the rising factorial; see Subsection \ref{subsec:polynomials}.

Section \ref{sec:convolutionHGP} contains the main results on the representation of finite free multiplicative and additive convolutions of two hypergeometric polynomials. For instance, we show that the free multiplicative convolution satisfies
$$
\pFq{i_1+1}{j_1}{-n, \bm a_1}{\bm b_1}{x} \boxtimes_n \pFq{i_2+1}{j_2}{-n, \bm a_2}{\bm b_2}{x}=\pFq{i_1+i_2+1}{j_1+j_2}{-n, \bm a_1, \bm a_2}{\bm b_1,\bm b_2}{x},
$$
(Theorem \ref{thm:multiplicativeConvHG}), while closed expressions for the additive convolution 
$$\pFq{i+1}{j}{-n,\ \bm a}{\bm b }{ x}\boxplus_n \pFq{s +1}{t}{-n,\ \bm c}{\bm d}{ x},$$
follow from factorizations (summation formulas) of hypergeometric functions of the form
$$
    \HGF{j_1}{i_1}{\bm a_1}{\bm b_1}{x}  \HGF{j_2}{i_2}{\bm a_2}{\bm b_2}{ x}=\HGF{j_3}{i_3}{\bm a_3}{\bm b_3}{x},
$$
(Theorem~\ref{thm.additive.conv.HG} and Corollary \ref{cor.additive.conv.as.poruduct.HG}). 
These formulas allow us to assemble more complicated hypergeometric polynomials from simpler hypergeometric ``building blocks''. 

Combining knowledge of the behavior of the zeros of these blocks with the zero-preserving properties of the finite free convolution, we can obtain further results on zeros of hypergeometric polynomials (Section~\ref{sec:compilation}). For small values of $i$ and $j$, the ${}_{i+1}\mathcal{F}_{j}$ hypergeometric polynomials correspond to classical families of polynomials: Laguerre, Bessel, and Jacobi. Their root location has been extensively studied, with very precise descriptions on when the polynomials are real-rooted, when they interlace, and several results on the asymptotic distribution of the zero counting measures.
A combination of this knowledge with the results of Section \ref{sec:convolutionHGP} yields a systematic approach to the construction of families of real-rooted hypergeometric polynomials for larger $i$ and $j$. 

For instance, here are some of the general facts we can establish about zeros of the hypergeometric polynomial 
$$
p(x)= \pFq{i+1}{j}{-n,\ \bm a}{ \bm b}{x}.
$$
\begin{itemize}
    \item If $b_1,\dots, b_j>0$ and  $a_1,\dots,a_i<-n+1$ then $p$ is real-rooted with all the roots of the same sign (positive if $i$ is even, or negative if $i$ is odd), see Theorem \ref{prop:42bis}.

    \item If $j\geq i$, $b_1,\dots, b_j > 0$ and $a_1,\dots, a_i\in \rr$ are such that $a_s\geq n-1+b_s$ for $s=1,\dots,i$, then $p$ has all positive roots, see Theorem \ref{prop:42}.
\end{itemize}

For the ${}_{2}\mathcal{F}_{2}$, ${}_{1}\mathcal{F}_{3}$, and ${}_{3}\mathcal{F}_{2}$ polynomials, we provide more specific results in Sections \ref{sec:2F2real} and \ref{sec:3F2real}. For the reader's convenience, we have compiled the main combinations of the hypergeometric parameters for which the corresponding polynomials are real-rooted in Tables \ref{tab:2F2}--\ref{tab:3F2case3extension}. However, we want to make clear that neither these results are exhaustive nor we consider the free convolution to be the universal tool for establishing the real-rootedness of such polynomials. Instead, our goal is to illustrate how this approach can yield some new and non-trivial results or provide alternative proofs of known facts. 

Finally, in Section \ref{sec:free.prob.asymototics} we use a simple reparametrization to recast the previous results in the framework of finite free probability in the hope of giving additional intuition or insight to the readers more familiar with this field.  Moreover, with this new reformulation the asymptotic root-counting measure of hypergeometric polynomials is reduced to studying the distribution of the addition and multiplication of free random variables that obey the Marchenko-Pastur, reciprocal Marchenko-Pastur, or free beta laws.

This text is part of a larger project that includes an article \cite{MFMP} in preparation.

\section{Preliminaries }
\label{sec:preliminaries}

\subsection{Polynomials and their coefficients} \label{subsec:polynomials}\

We start by introducing some notation. In what follows, $\P_n$ stands for all algebraic polynomials of degree $\le n$, and $\monicpols \subset \P_n$ is the subset of monic polynomials of degree $n$. Also, for $K\subset \C$, we denote by $\P_n(K) $ (resp., $\monicpols(K)$) the subset of polynomials of degree $\le n$ (resp., monic polynomials of degree $n$) with all zeros in $K$. In particular, $\monicpolreal$ denotes the family of real-rooted monic polynomials of degree $n$, $\monicpols(\R_{\ge 0})$ is the subset of $\monicpolreal$ of polynomials having only non-negative roots, etc. 

Every polynomial $p$ of degree $n$ can be written in the form
\begin{equation}\label{monicP}
    p(x)=\sum_{j=0}^n x^{n-j} (-1)^j e_j(p).
\end{equation}
Since we do not require a priori $e_0(p)\neq 0$, notation $e_j(p)$ implicitly assumes the dependence on $n$. It is convenient to keep it in mind, although we will avoid mentioning it explicitly to simplify the notation.

If $p$ is a polynomial of degree $n$, then  $e_j(p)/ e_0(p)$ are just the symmetric sums of its roots: denoting by  $\lambda_1(p),\dots, \lambda_n(p)$ the roots of $p$ (in the case when $p$ is real-rooted, we use the convention that $\lambda_1(p)\geq \lambda_2(p)\geq \dots \geq \lambda_n(p)$), then 
\begin{equation}
    \label{zerossymmetric}
e_j(p)/ e_0(p)=\sum_{sym} \lambda_1(p)\lambda_2(p)\dots \lambda_j(p):=\sum_{1\leq i_1<i_2<\dots< i_j\leq n} \lambda_{i_1}(p)\lambda_{i_2}(p)\dots \lambda_{i_j}(p).
\end{equation}

One simple observation that we will use later is that if $p$ is \eqref{monicP}, and that for a constant $c\in \C $,  
$$
 q(x):=x^n p(c/x) 
$$
then the coefficients for $q$ are
\begin{equation}
    \label{eq:monicreversed}
e_j(q) = (-1)^n c^j e_{n-j}(p), \quad j=0, 1, \dots, n.     
\end{equation}

\subsection{Hypergeometric polynomials} \label{subsec:HGpolynomials}\

Rising and falling factorials play a crucial role in our calculations. The \textbf{rising factorial} (also, \textbf{Pochhammer's symbol\footnote{\, Another standard notation for the raising factorial is $(a)_j$. We prefer to use the notation defined here.}})  for $a\ne 0$ and $j\in \mathbb Z_{\ge 0}:= \N \cup \{0\}$ is  
$$
\raising{a}{j}:=a(a+1)\dots(a+j-1)= \frac{\Gamma(a+j)}{\Gamma(a)}, \quad \raising{a}{0}:=1,$$
while the \textbf{falling factorial} is defined as
$$
\falling{a}{j}:=a(a-1)\dots(a-j+1)= \raising{a-j+1}{j}, \quad \falling{a}{0}:=1.
$$
Notice the obvious useful relations
\begin{equation}
    \label{connectionRandL}
    \falling{a}{j}=(-1)^j \raising{-a}{j}, 
\end{equation}
as well as 
\begin{equation}
    \label{connectionRandL2}
     \raising{a}{n} = \raising{a}{n-j} \falling{a+n-1}{j}, \quad 0\le j \le n.
\end{equation}

A generalized \textbf{hypergeometric series} \cite{MR2656096, MR2723248} is an expression 
\begin{equation} \label{hypergeoSeries}
\HGF{i+1}{j}{a_0, a_1, \dots, a_i}{b_1, \dots, b_j}{x}=\sum_{k=0}^\infty  \frac{\raising{a_0}{k} \raising{a_1}{k}\dots\raising{a_i}{k}}{\raising{b_1}{k} \dots\raising{b_j}{k}} \frac{x^k}{k!}.
\end{equation}
If $\bm a =(a_1, \dots, a_i)\in \R^i$ is a vector (tuple), we understand by
$$
\raising{\bm a}{k} =\prod_{s=1}^i \raising{a_s}{k},
$$
and therefore, with $\bm a =(a_1, \dots, a_i)\in \R^i$ and $\bm b =(b_1, \dots, b_j)\in \R^j$, we can write 
\begin{equation} \label{hypergeoSeriesBis}
\HGF{i+1}{j}{a_0, \bm a}{\bm b}{x}=\sum_{k=0}^\infty  \frac{\raising{a_0}{k} \raising{\bm a }{k} }{\raising{\bm b }{k}  } \frac{x^k}{k !}.
\end{equation}
In the particular case when $a_0$ is a negative integer, the series is terminating and defines a polynomial. More precisely, for $n\in \N$, 
$$
 \HGF{i+1}{j}{-n, \bm a  }{\bm b }{x}=\sum_{k=0}^n  \frac{\raising{-n}{k} \raising{\bm a }{k} }{\raising{\bm b }{k}  } \frac{x^k}{k !}
$$
is a (generalized) \textbf{hypergeometric polynomial} of degree $ \le n$, as long as 
\begin{equation*}  
    b_1,\dots b_j \in \C \setminus \{0, -1,-2,\dots, -n+1 , -n\}.
\end{equation*}

In what follows, it will be more convenient for us to work with the \textbf{normalized terminating hypergeometric series}
\begin{equation} \label{hypergeoSeriesNormalized}
\pFq{i+1}{j}{-n, a_1, \dots, a_i}{b_1, \dots, b_j}{x}:= \left( \prod_{k=1}^j \raising{b_k}{n}\right) \HGF{i+1}{j}{-n, a_1, \dots, a_i}{b_1, \dots, b_j}{x}.
\end{equation}
Correspondingly, we say that the  (generalized) \textbf{hypergeometric polynomial} $p$ of degree $n\in \mathbb Z_{\ge 0} $ is in \textbf{standard normalization} if\,\footnote{\, If $j=0$, then the factor $\raising{\bm b}{n}$ in the right-hand side of \eqref{hypergeoPolyn} equals $1$, and thus we identify $_{i+1}F_0$ and $_{i+1}\mathcal F_0$.}
\begin{equation} \label{hypergeoPolyn}
p(x)=\pFq{i+1}{j}{-n, \bm a}{\bm b }{x} = \raising{\bm b}{n} \, \sum_{k=0}^n \frac{\raising{-n}{k} \raising{\bm a}{k}}{\raising{\bm b}{k}} \frac{x^k}{k !}.
\end{equation}
Notice that with this normalization, $p$   is a polynomial in both $x$ and its parameters $a_s$, $b_s$: using \eqref{connectionRandL2}, we can rewrite the expression \eqref{hypergeoPolyn} as 
\begin{equation}
    \label{consequenceconnectionRandL2}
      p(x)=\pFq{i+1}{j}{-n, \bm a}{\bm b }{x} =   \sum_{k=0}^n \ \raising{-n}{k} \raising{\bm a}{k} \raising{\bm b+k}{n-k} \frac{x^k}{k !};
\end{equation}
so that, for this polynomial, written in the form \eqref{monicP}, we have that 
\begin{equation} \label{leadingcoeffHGFBis}
    e_k(p)=(-1)^{n+k(i+j)}  \raising{\bm a}{n} \binom{n}{k} \frac{\raising{-\bm b-n+1}{k}}{\raising{-\bm a-n+1}{k}} =(-1)^{n }   \binom{n}{k}
    \raising{\bm a}{n-k} \raising{\bm b+n-k}{k},   
\end{equation}
and in particular,
\begin{equation} \label{leadingcoeffHGF}
    e_0(p)=(-1)^n  \raising{\bm a}{n} , \quad e_n(p)=(-1)^n  \raising{\bm b}{n}.
\end{equation}
These expressions show that the polynomial is of degree exactly  $n$ if and only if
\begin{equation}\label{assumptionsHGFNew}
  a_1,\dots, a_i   \in \C \setminus \{0, -1,-2,\dots, -n+1  \},
\end{equation}
constraint that we will consider enforced henceforth. For the rest of the values of the parameters, we always understand by the hypergeometric polynomial in standard normalization the expression \eqref{consequenceconnectionRandL2}. 

Since the following discrete set will appear very frequently in this work, we will introduce the notation
\begin{equation}
    \label{notationZn}
    \Z_n := \{0,  1, 2,\dots,  n-1  \},
\end{equation}
understanding by $-\Z_n$ the set $\{0, -1,-2,\dots, -n+1  \}$. In particular, condition \eqref{assumptionsHGFNew} can be written as $a_1,\dots, a_i \notin \left(- \Z_n\right)$. 

Notice that as in \eqref{eq:monicreversed}, for a constant $c\in \C$, the polynomial
$$q(x):=x^n p(c/x)= x^n {}_{i+1} F_j\left(\begin{matrix} -n, \bm a \\ \bm b
\end{matrix};\frac{c}{x}\right),$$
written in the form \eqref{monicP}, has coefficients  
$$
e_k(q)=c^k \binom{n}{k} \frac{ \raising{\bm a}{k} }{\raising{\bm b}{k} }.
$$ 

Comparing it with \eqref{eq:monicreversed} and \eqref{leadingcoeffHGFBis}, direct computations lead to the following identity:
\begin{lemma}
\label{lem:pFq.reversed}
The following identity holds true for $x\neq 0$:
$$
\pFq{i+1}{j}{-n,\ \bm a}{ \bm b}{x}=
\left((-1)^{i+1} x\right)^n \pFq{j+1}{i}{-n,\ -n -{\bm b} +1}{ -n -{\bm a} +1}{(-1)^{i+j}\frac{1}{x}}.
$$
\end{lemma}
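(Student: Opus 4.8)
The plan is to prove the identity by comparing the coefficients $e_k$ in the monic expansion \eqref{monicP}, using the reversal rule \eqref{eq:monicreversed}, the closed form \eqref{leadingcoeffHGFBis}, and the coefficient formula for $x^{n}p(c/x)$ displayed just above the statement. Write
\[
P(x):=\pFq{i+1}{j}{-n,\ \bm a}{\bm b}{x},\qquad
\tilde P(x):=\pFq{j+1}{i}{-n,\ -n-\bm b+1}{-n-\bm a+1}{x},\qquad c:=(-1)^{i+j}.
\]
Since $\tilde P$ has degree $\le n$, the right-hand side $\left((-1)^{i+1}x\right)^{n}\tilde P(c/x)$ is a genuine polynomial in $x$ (one multiplies $x^{n}\tilde P(c/x)=\sum_{k}[\,\cdot\,]\,c^{k}x^{n-k}$ by $(\pm1)^{n}$); the restriction $x\neq0$ in the statement is only an artifact of writing that side through $1/x$. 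Substituting $x\mapsto c/x$ (legitimate since $c=\pm1\neq0$) and using $c/(c/x)=x$, proving the lemma is equivalent, both sides being polynomials in $x$, to proving
\begin{equation}\label{eq:plan.reduced}
x^{n}P(c/x)=(-1)^{n(j+1)}\,\tilde P(x),
\end{equation}
after which only the powers of $c$ and of $-1$ have to be collected.

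To establish \eqref{eq:plan.reduced} I would compare coefficients on both sides. For the left-hand side, I would write $P=\raising{\bm b}{n}\,{}_{i+1}F_{j}(-n,\bm a;\bm b;\cdot)$ and invoke the coefficient formula recorded just above the lemma to get $e_{k}(x^{n}P(c/x))=\raising{\bm b}{n}\,c^{k}\binom{n}{k}\raising{\bm a}{k}/\raising{\bm b}{k}$. For the right-hand side, the numerator tuple $-n-\bm b+1$ of $\tilde P$ has $j$ entries and its denominator tuple $-n-\bm a+1$ has $i$ entries, so the roles of $i$ and $j$ are interchanged while the exponent $k(i+j)=k(j+i)$ in \eqref{leadingcoeffHGFBis} stays the same; feeding these parameters into the first expression of \eqref{leadingcoeffHGFBis} and using $-(-n-a_{s}+1)-n+1=a_{s}$ and $-(-n-b_{s}+1)-n+1=b_{s}$ gives $e_{k}(\tilde P)=(-1)^{n+k(i+j)}\raising{-n-\bm b+1}{n}\binom{n}{k}\raising{\bm a}{k}/\raising{\bm b}{k}$. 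Since $c^{k}=(-1)^{k(i+j)}$, the two sequences are proportional, and the last ingredient is the scalar reflection $\raising{-n-b+1}{n}=(-1)^{n}\raising{b}{n}$ — immediate from \eqref{connectionRandL} followed by \eqref{connectionRandL2} with $j=n$, or by simply reversing the product defining the rising factorial — applied entrywise to give $\raising{-n-\bm b+1}{n}=(-1)^{nj}\raising{\bm b}{n}$. Combining these yields $e_{k}(x^{n}P(c/x))=(-1)^{n(j+1)}e_{k}(\tilde P)$ for every $k$, hence \eqref{eq:plan.reduced}. Finally, replacing $x\mapsto c/x$ in \eqref{eq:plan.reduced} and using $c^{-n}=(-1)^{n(i+j)}$ gives $P(x)=(-1)^{n(j+1)+n(i+j)}x^{n}\tilde P(c/x)=(-1)^{n(i+1)}x^{n}\tilde P(c/x)$ (the exponent $2nj$ being even), which, since $(-1)^{n(i+1)}x^{n}=\left((-1)^{i+1}x\right)^{n}$ and $c=(-1)^{i+j}$, is exactly the claimed identity.

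The arithmetic is entirely elementary, so I expect the main obstacle to be the sign bookkeeping — carrying correctly the various $n,i,j$-dependent exponents of $-1$ and the factor $\raising{-n-\bm b+1}{n}$ versus $\raising{\bm b}{n}$ — together with checking that exchanging $i$ and $j$ when passing to $\tilde P$ causes no asymmetry in \eqref{leadingcoeffHGFBis}. If one wishes to avoid even the mild concern that $\raising{\bm b}{n}$ could vanish (so that the ${}_{i+1}F_{j}$ representation of $P$ degenerates), one can run the same scheme using the second, manifestly polynomial, expression in \eqref{leadingcoeffHGFBis} for both $P$ and $\tilde P$ and comparing $e_{n-k}(\tilde P)$ with $e_{k}(P)$ directly via \eqref{eq:monicreversed}; the reflections then required are $\raising{-n-b+1}{k}=(-1)^{k}\raising{b+n-k}{k}$ and $\raising{-m-a+1}{m}=(-1)^{m}\raising{a}{m}$, and the same overall sign $(-1)^{n(i+1)}$ appears. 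In any case, both sides are polynomials in $x$ with coefficients polynomial in the parameters, so it would suffice to check the identity for generic $\bm a,\bm b$, e.g.\ with $\raising{\bm a}{n}\raising{\bm b}{n}\neq0$.
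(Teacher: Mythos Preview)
Your proof is correct and follows exactly the route the paper indicates: compare the coefficients $e_k$ of both sides using the reversal formula \eqref{eq:monicreversed} together with the explicit expression \eqref{leadingcoeffHGFBis} (and the display immediately preceding the lemma). The paper's own argument is just the one-line ``direct computations'' via these two identities; you have simply written out those computations in full, including the sign bookkeeping and the reflection $\raising{-n-b+1}{n}=(-1)^n\raising{b}{n}$, so there is nothing to add.
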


\subsection{Some classical hypergeometric polynomials}\label{sec:classicalHGP}\

Many classical families of polynomials are hypergeometric. In this section, we summarize some basic information needed in what follows. Further details and formulas can be found in \cite{MR2542683, MR2656096, MR2723248, szego1975orthogonal}.

We will use, whenever possible, the standard normalization introduced in \eqref{hypergeoPolyn}.

Since for  $a\in \C$, and for each $n\in \N$, 
$$ 
p_n(x)=(x-a)^n= \sum_{k=1}^n x^{n-k} (-1)^k \binom{n}{k} a^k,
$$ 
we can conclude that for $a\neq 0$,  
\begin{equation}
    \label{Power1F0}
    (x-a)^n= (-a)^n \ {}_1 F_0\left(\begin{matrix} -n \\ \cdot 
\end{matrix}; \frac{x}{a} \right).
\end{equation}

The   (generalized) \textbf{Laguerre polynomials} of degree $n$ and parameter $\alpha\in \C$, in their traditional normalization, are defined as
\begin{equation}
    \label{laguerreP}
    L^{(\alpha)}_{n}(x):=\sum_{k=0}^{n}
\frac{ \falling{n+\alpha}{n-k}}{k!(n-k)!}\, (-x)^{k}.
\end{equation}
Additionally, for $k=1,2,\dots,n$,
\begin{equation} \label{reductionLaguerre}
    L_n^{(-k)}(x)=(-x)^k \, \frac{(n-k) !}{n !}\,  L_{n-k}^{(k)}(x).
\end{equation}
When $\alpha\geq -1$,   $ L^{(\alpha)}_{n} $ are orthogonal on $[0, \infty)$, so that all their roots are simple and $L^{(\alpha)}_{n} \in \P_n(\R_{>0})$. By \eqref{reductionLaguerre}, for $\alpha=-1,-2,\dots,-n$, $L^{(\alpha)}_{n}\in \P_n(\R_{\ge 0})$, with a unique multiple root of order $-\alpha$ at $ 0$, and all other roots distinct and positive. Moreover,  $L^{(\alpha)}_{n}\in \P_n(\R )$   even when  $\alpha\in(-2,-1)$, with   $n-1$ positive zeros and one negative zero, see, e.g.,~\cite[\S 6,73]{szego1975orthogonal}.  
 
As a hypergeometric function one has
\begin{equation} \label{HGFLaguerre}
L^{(\alpha)}_{n}(x)= 
\frac{1}{n!}\, 
\pFq{ 1}{1}{-n}{\alpha +1}{x}.
\end{equation}

By Lemma~\ref{lem:pFq.reversed}, the reciprocal polynomials are
\begin{equation}
    \label{reversedLaguerre}
    q(x)=x^n L^{(\alpha)}_{n}(-1/x)=\frac{1}{n!}\, \pFq{ 2}{0}{-n, -n-\alpha}{\cdot}{x},
\end{equation}
and are known as \textbf{Bessel polynomials}.

Finally, the \textbf{Jacobi polynomials} of degree $n$ and parameters $\alpha,\beta\in \C$ are
\begin{align*}
P^{(\alpha,\beta)}_{n}(x)&:=\frac{1}{n!}\sum_{k=0}^{n} \binom{n}{k} \falling{n+\alpha+\beta+k}{k} \falling{\alpha + n}{n-k} \left(\frac{x-1}{2}\right)^k \\
&=\frac{1}{n!}\sum_{k=0}^{n} \binom{n}{k} \raising{n+\alpha+\beta+1}{k} \raising{\alpha + k+1}{n-k} \left(\frac{x-1}{2}\right)^k.    
\end{align*}
In consequence,  
\begin{align} \label{JacobiPfla1}
P^{(\alpha,\beta)}_{n}(x)&:= \frac{1}{n!}\, \pFq{ 2}{1}{-n, n+ \alpha+ \beta+1}{\alpha +1}{\frac{1-x}{2}} 
\\ 
\label{JacobiPfla3}
& =\frac{(-1)^n}{n!}\, \pFq{ 2}{1}{-n, n+ \alpha+ \beta+1}{\beta +1}{\frac{1+x}{2}}
\\ 
\label{JacobiPfla2}
&=\frac{1}{n!}\, \left(\frac{1+x}{2}\right)^n \, \pFq{ 2}{1}{-n, -n-\beta}{\alpha + 1 }{\frac{x-1}{x+1}}.
\end{align}

The following  identities are well known:
\begin{equation}\label{transfJacobi}
\begin{split}
P_n^{(\alpha, \beta)}(-x)& =(-1)^n P_n^{(\beta, \alpha)}(x), \\
P_n^{(\alpha, \beta)}(x) & =\left(\frac{1-x}{2}\right)^n P_n^{(-2 n-\alpha-\beta-1, \beta)}\left(\frac{x+3}{x-1}\right), \\
P_n^{(\alpha, \beta)}(x)& =\left(\frac{1+x}{2}\right)^n P_n^{(\alpha,-2 n-\alpha-\beta-1)}\left(\frac{3-x}{x+1}\right),
\end{split}
\end{equation}
see \cite[\S 4.22]{szego1975orthogonal}.

The classical Jacobi polynomials (that correspond to parameters $\alpha, \beta>-1$)  are orthogonal on $[-1,1]$ with respect to the weight function $(1-x)^\alpha(1+x)^\beta$. Consequently, all their zeros are simple and belong to the interval $(-1,1)$. If $\alpha$ or $\beta$ are in $[-2,-1]$, we can also guarantee that $P_n^{(\alpha, \beta)}$ has real zeros, but not all of them in $[-1,1]$, see \cite{driver2018zeros}. 

Moreover, $P_n^{(\alpha, \beta)}(x)$ may have a multiple zero, but always at $x=\pm1$:
\begin{itemize}
    \item at $x=1$, if $\alpha \in\{-1, \ldots,-n\}$. More precisely, for $k \in\{1, \ldots, n\}$, we have (see \cite[Eq. (4.22.2)]{szego1975orthogonal}),
\begin{equation}
    \label{reductionJacobi1}
    P_n^{(-k, \beta)}(x)=\frac{\raising{n+\beta+1-k}{k} }{\raising{n-k+1}{k}} \left(\frac{x-1}{2}\right)^k P_{n-k}^{(k, \beta)}(x).
\end{equation}
This implies, in particular, that $P_n^{(-k, \beta)}(x) \equiv 0$ if additionally $\max \{k,-\beta\} \leq n \leq k-\beta-1$. 

    \item at $x=-1$ if $\beta \in\{-1, \ldots,-n\}$. More precisely, when $l \in\{1, \ldots, n\}$. 
    \begin{equation}
    \label{reductionJacobi1bis}
    P_n^{(\alpha, -l)}(x)=  \frac{\raising{n+\alpha+1-l}{l} }{\raising{n-l+1}{l}} \left(\frac{x+1}{2}\right)^l P_{n-l}^{(\alpha, l)}(x).
\end{equation}
    
The formulas above show that when both $k, l \in \mathbb{N}$ and $k+l \leq n$, we have
   \begin{equation}
   \label{reductionJacobi3new}
P_n^{(-k,-l)}(x)=2^{-k-l}(x-1)^k(x+1)^l P_{n-k-l}^{(k, l)}(x),
\end{equation}
with $P_n^{(-k,-l)}\equiv 0$ if $n\le k+l-1$. \\

    \item at $x=\infty$ (which means a degree reduction):  when $n+\alpha+\beta=-k \in\{-1, \ldots,-n\}$,
 \begin{equation*}
P_n^{(\alpha, \beta)}(x)=\frac{\Gamma(n+\alpha+1)}{\Gamma(k+\alpha)} \frac{(k-1) !}{n !} P_{k-1}^{(\alpha, \beta)}(x) ;
\end{equation*}
see \cite[Eq. (4.22.3)]{szego1975orthogonal}.  
\end{itemize}

These identities can be easily reformulated in terms of the hypergeometric polynomials in standard normalization using that
\begin{align} \label{HPJacobiPfla1}
\pFq{ 2}{1}{-n, a}{b }{x} 
&:=  n!  \, P^{(b-1,-n+a-b)}_{n}(1-2x) =(-1)^n  n! \, P^{(-n+a-b,b-1)}_{n}(2x-1) .
\end{align}
Furthermore, for $k\in \Z_n  $, we have the following consequences of applying formula  \eqref{JacobiPfla1} to \eqref{reductionJacobi1}--\eqref{reductionJacobi3new}:
\begin{itemize}
\item formula \eqref{reductionJacobi1} gives
\begin{equation}
    \label{2ndreduction2F1}
\pFq{2}{1}{-n, b + k+1}{-n+k+1}{x}
        = \raising{b+k+1}{n-k} \, (-x)^{n-k} \, \pFq{2}{1}{-k, b + n+1}{n-k+1}{x}   
\end{equation}
(in order to enforce condition \eqref{assumptionsHGFNew} in both sides, we assume that
\begin{equation}
    \label{b+k}
  b+k \notin (-\Z_n),  
\end{equation}
and evaluate the polynomials using the equivalent expression \eqref{consequenceconnectionRandL2});
 \item formula \eqref{reductionJacobi1bis} produces
    \begin{equation}
    \label{reduction2F1}
\pFq{2}{1}{-n, b + k}{b}{x}
        =  \raising{b+k}{n-k} \, (1-x)^{n-k} \, \pFq{2}{1}{-k, b + n}{b}{x}   
\end{equation}
(again, we require \eqref{b+k}); 

\item finally, by \eqref{reductionJacobi3new},  for $0\le k \le j \le n$,
  \begin{equation}
    \label{reduction2F14bis}
\pFq{2}{1}{-n, j-k+1}{1-k}{x}
        =   \raising{j-k+1}{n+k-j}\,(-x)^{k}\,(1-x)^{n-j}  \, \pFq{2}{1}{k-j, n+1}{k+1}{x}  . 
\end{equation}
\end{itemize}

\subsection{Finite free convolution of polynomials} \label{sec:freeconvol} \

In this section, we summarize some definitions and results on the finite free additive and multiplicative convolutions that will be used throughout the paper. These correspond to two classical polynomial convolutions studied a century ago by Szeg\H{o} \cite{szego1922} and Walsh \cite{Walsh1922} that were recently rediscovered in \cite{MR4408504} as expected characteristic polynomials of the sum and product of randomly rotated matrices.

\subsubsection{Multiplicative finite free convolution}
\begin{definition}[\cite{MR4408504}] \label{def:MultiplicativeConv}
Given two polynomials, $p$ and $q$, of degree at most $n$, the 
\textbf{$n$-th multiplicative finite free convolution} of $p$ and $q$, denoted as $p\boxtimes_n q$, is a polynomial of degree at most $n$, which can be defined in terms of the coefficients of polynomials written in the form \eqref{monicP}: if
\begin{equation}
    \label{defFMULTCONV}
    p(x)=\sum_{j=0}^n x^{n-j}(-1)^j e_j(p) \quad \text { and } q(x)=\sum_{j=0}^n x^{n-j}(-1)^j e_j(q),
\end{equation}
then
$$
[p\boxtimes_n q](x) = \sum_{k=0}^n x^{n-k}(-1)^k e_k(p\boxtimes_n q),
$$
with
\begin{equation}
    \label{coeffMultConv}
  e_k(p\boxtimes_n q) :=  \binom{n}{k}^{-1} e_j(p) e_j(q).
\end{equation}
\end{definition}
In particular, if $p, q \in \monicpols$, then also $p\boxtimes_n q \in \monicpols$. 
\begin{remark}
This operation, originally introduced in \cite{szego1922}, is known in the study of geometry of polynomials and of the Laguerre-Polya class as the \textbf{Schur--Szeg\H{o} composition}, see, e.g.~\cite{MR2242036, MR2375107}. It can also be regarded as the Hadamard product of $p$ and $q$, up to a sign and binomial factor; see~\cite{MR2825108}. We will use the term ``multiplicative finite free convolution'' for uniformity of terminology.
\end{remark}

The multiplicative finite free convolution is a linear operator  from $\P_n \times \P_n$ to $\P_n$: if $p,q,r\in \P_n$,  and $\alpha\in\rr$, then 
\begin{equation*}
    (\alpha p+q)\boxtimes_n r= \alpha (p\boxtimes_n r)+q\boxtimes_n r.
\end{equation*}

Definition \ref{def:MultiplicativeConv} allows us to establish easily that 
    \begin{equation}
        \label{identityMult1}
        p(x) \boxtimes_n (x-1)^n = p(x),
    \end{equation}
(that is, $(x-1)^n$ is an identity for the multiplicative convolution), as well as that
\begin{equation}
    \label{identityMult2}
    p(x) \boxtimes_n (x-\alpha)^n = \alpha^n p\left(\frac{x}{\alpha}\right), \quad \alpha\neq 0.
\end{equation}
This motivates the following definition:
\begin{definition} \label{def:multInverse}
Given $p  $ of degree $n$, the polynomial $q\in \mathbb P_n$ such that $p(x)\boxtimes_n q(x)=(x-1)^n$ is called the \textbf{inverse of $p$ under the multiplicative (finite free) convolution}.
\end{definition}
Notice that such an inverse does not always exist, since by \eqref{coeffMultConv}, a coefficient of $p\boxtimes_n q$ vanishes if the corresponding coefficient of $p$ or $q$ is $0$.

\subsubsection{Additive finite free convolution}

\begin{definition}[\cite{MR4408504}]
Given two polynomials, $p$ and $q$, of degree at most $n$, the \textbf{$n$-th additive finite free convolution} of $p$ and $q$, denoted as $p\boxplus_n q$, is a polynomial of degree at most $n$, defined in terms of the coefficients of polynomials written in the form \eqref{monicP}: if
\begin{equation}
    \label{defFADDCONV}
    p(x)=\sum_{j=0}^n x^{n-j}(-1)^j e_j(p) \quad \text { and } q(x)=\sum_{j=0}^n x^{n-j}(-1)^j e_j(q),
\end{equation}
then
$$
[p\boxplus_n q](x) = \sum_{k=0}^n x^{n-k}(-1)^k e_k(p\boxplus_n q),
$$
with
\begin{equation}
    \label{coeffAdditiveConv}
e_k(p\boxplus_n q) :=  \sum_{i+j=k} \frac{(n-i) !(n-j) !}{n !(n-k) !} \, e_i(p) e_j(q)
\end{equation}
(and thus, $e_0(p\boxplus_n q)=e_0(p)e_0(  q)$).
\end{definition}
 
Equivalently, the additive free convolution can be defined as
\begin{equation}
    \label{defAdditiveConv}
    [p\boxplus_n q](x):=\frac{1}{n!} \sum_{i=0}^n p^{(i)}(x) q^{(n-i)}(0)=\frac{1}{n!} \sum_{i=0}^n q^{(i)}(x) p^{(n-i)}(0).
\end{equation}
Especially useful for our purposes will be the third equivalent definition in terms of the associated differential operators. Namely, given the polynomial $p$ in \eqref{monicP}, define
a differential operator $D_p$ as
\begin{equation}\label{diffOperatorforP}
    D_p:=\sum_{j=0}^n (-1)^j \frac{e_j(p)}{\falling{n}{j}} \left(\frac{\partial}{\partial x}\right)^j.
\end{equation}
Then 
\begin{equation}\label{correspondencediffOperatorforP}
    D_p[x^n]= \sum_{j=0}^n x^{n-j} (-1)^j e_j(p) =p(x).
\end{equation}
Clearly, the correspondence $p \leftrightarrow D_p$ is a bijection between $\mathbb P_n$ and linear differential operators of degree $\le n$ with constant coefficients. For future reference, it is also convenient to observe that for a constant $c\neq 0$,
\begin{equation}\label{correspondencediffOperatorforP2}
   \left( \sum_{j=0}^n (-1)^j \frac{e_j(p)}{\falling{n}{j}} \left(c \frac{\partial}{\partial x}\right)^j \right) [x^n]= \sum_{j=0}^n x^{n-j} (-1)^j c^j e_j(p) = c^n p(x/c).
\end{equation}
  
Now, back to the additive free convolution: if $D_p$ and $D_q$ are the differential operators, corresponding to polynomials $p$ and $q$ in \eqref{defFADDCONV}, that is, if $p(x)=D_p [x^n]$ and $q(x)=D_q [x^n]$, then
\begin{equation} \label{thirdDefAdditiveConv}
    [p\boxplus_n q](x)=D_p[D_q[x^n]]=D_q[D_p[x^n]].
\end{equation} 
It follows from here (or directly from the definition) that
\begin{equation} \label{inversion}
    [p(-x)]\boxplus_n [q(-x)]=(-1)^n[p\boxplus_n q](-x).
\end{equation} 
 
When at least one of the polynomial is of degree strictly smaller than $n$, then (see Lemma 1.16 of \cite{MR4408504})
\begin{equation}
    \label{derivativeFreeConv}
    p\boxplus_n q = \frac{1}{n} p'\boxplus_{n-1} q,
\end{equation}
whenever $p\in \P_n$ and $q\in \P_{n-1}$. In particular, if $\deg p=n$ and $0\le k \le n$, then
$$
p \boxplus_n x^k = \frac{(n-k)!}{n!} \, p^{(n-k)},
$$
which also easily follows from \eqref{defAdditiveConv}.

The additive finite free convolution is a linear operator  from $\P_n \times \P_n$ to $\P_n$: if $p,q,r\in \P_n$,  and $\alpha\in\rr$, then 
\begin{equation*}
    (\alpha p+q)\boxplus_n r= \alpha (p\boxplus_n r)+q\boxplus_n r.
\end{equation*}
 
The three equivalent definitions of additive finite free convolution allow us to establish easily that 
    \begin{equation}
        \label{shift}
        p(x) \boxplus_n (x-\alpha)^n = p(x-\alpha), \quad p\in \P_n,
    \end{equation}
    so that, in particular, $p \boxplus_n x^n = p$. In other words, $x^n$ is an identity for the additive convolution. This motivates the following definition:
    \begin{definition} \label{def:additiveInverse}
  Given $p  $ of degree $n$, the polynomial $q\in \mathbb P_n$ such that $p(x)\boxplus_n q(x)=x^n$ is called the \textbf{inverse of $p$ under the additive (finite free) convolution}.
\end{definition}
Such an inverse always exists (see \cite[Corollary 6.2]{marcus}) and can be constructed recursively, using \eqref{coeffAdditiveConv}. 

Moreover, $p \boxplus_n q =0$ if and only if $\deg(p)+\deg(q) < n$, or if $\deg p=n$ then $q\equiv 0$ (last observation follows from \eqref{defAdditiveConv} and the fact that when $\deg p=n$, polynomials $p^{(i)}$, $i=0, 1, \dots, n$, form a basis of $\mathbb P_n$). This also shows that the inverse of any $p\in \P_n$ under the additive (finite free) convolution $\boxplus_n$ is unique.

\subsection{Real roots, interlacing, and free finite convolution} \label{sec:realrootednessfreeconv}\

A very important fact is that in many circumstances the finite free convolution of two polynomials with real roots also has all its roots real. Here, we use the notation introduced at the beginning of Section \ref{subsec:polynomials}. 

\begin{proposition}[Szeg\H{o} \cite{szego1922}, Walsh \cite{Walsh1922}]
    \label{prop:realrootedness}
    Let $p, q \in \P_n$. Then
    \begin{enumerate}
        \item[(i)]  $p,q\in \P_n(\rr) \ \Rightarrow \ p\boxplus_n q\in \P_n(\rr)$.
        \item[(ii)] $p\in \P_n(\rr),\ q\in \P_n(\rr_{\geq 0}) \ \Rightarrow \ p\boxtimes_n q\in \P(\rr)$.
        \item[(iii)] $p,q\in \P_n(\rr_{\geq 0}) \ \Rightarrow \ p\boxtimes_n q\in \P(\rr_{\geq 0})$.
    \end{enumerate}
\end{proposition}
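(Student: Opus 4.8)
The plan is to prove Proposition~\ref{prop:realrootedness} via the representation of finite free convolution as expected characteristic polynomials of sums and products of randomly rotated Hermitian matrices, which is the viewpoint of \cite{MR4408504}. Concretely, if $p$ and $q$ are monic of degree $n$ with all real roots, pick diagonal real matrices $A$ and $B$ whose eigenvalues are the roots of $p$ and $q$ respectively. Let $Q$ be a Haar-distributed unitary (or orthogonal) matrix. Then the key identities are
\begin{equation*}
p\boxplus_n q = \E_Q \det\!\left(xI - A - QBQ^*\right), \qquad p\boxtimes_n q = \E_Q \det\!\left(xI - A\,QBQ^*\right),
\end{equation*}
valid up to the appropriate normalization; these are Theorem~1.1 and Theorem~1.4 of \cite{MR4408504}. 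Once one has such a formula, the argument is structural: the matrix $A+QBQ^*$ is Hermitian for every realization of $Q$, hence each polynomial $\det(xI-A-QBQ^*)$ is real-rooted, so $p\boxplus_n q$ is an average of real-rooted polynomials of the same degree. One then invokes the classical fact that an average (more precisely, a convex combination, or even an integral against a probability measure) of real-rooted polynomials of degree $n$ with a common sign of the leading coefficient need not in general be real-rooted --- so this is precisely the step where more is needed.

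Thus the real crux, and what I expect to be the main obstacle, is that averaging does not preserve real-rootedness in general; one needs the stronger notion of a \emph{common interlacing family}. The standard fix (again from \cite{MR4408504}, following the method of interlacing polynomials of Marcus--Spielman--Srivastava) is: (a) show that the family $\{\det(xI - A - QBQ^*)\}_Q$ has a common interlacing, or more robustly that it forms an interlacing family in the sense that suitable conditional expectations remain real-rooted; (b) deduce that the top-level expectation $\E_Q \det(xI-A-QBQ^*)$ is real-rooted. Actually, for this particular proposition there is a cleaner route, since we are not chasing largest-root bounds but only real-rootedness: one can use that the set of real-rooted polynomials (of degree $\le n$, with the convention on leading coefficient) together with the zero polynomial is closed under the operations $D_p$ acting on real-rooted polynomials, i.e. that $D_p$ for $p\in\P_n(\R)$ is a real-root-preserving differential operator. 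This is the operator viewpoint of \eqref{thirdDefAdditiveConv}: writing $p\boxplus_n q = D_p[q]$ with $D_p=\sum_j (-1)^j \frac{e_j(p)}{\falling{n}{j}}(\partial/\partial x)^j$, real-rootedness of $p\boxplus_n q$ reduces to showing $D_p$ maps $\P_n(\R)$ into $\P_n(\R)$.

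So the cleanest exposition I would give has two independent pieces. For part~(i): factor $D_p = c\prod_{k=1}^n \bigl(I - \tfrac{1}{n}\lambda_k(p)\,\tfrac{\partial}{\partial x}\bigr)$ using the real roots $\lambda_k(p)$ of $p$ (this factorization follows by comparing coefficients, since both sides are degree-$n$ polynomials in $\partial/\partial x$ with matching roots), and then observe that each elementary factor $I - c\,\partial/\partial x$ with $c\in\R$ preserves $\P_n(\R)$ --- this is a one-dimensional instance of the classical Hermite--Poulain / Laguerre theorem, or can be seen directly from Rolle's theorem applied to $e^{-x/c}$-type multipliers (if $c\neq 0$, $(I-c\,\partial/\partial x)f(x) = -c\,e^{x/c}\tfrac{d}{dx}\bigl(e^{-x/c}f(x)\bigr)$, and $e^{-x/c}f$ has the same real zeros as $f$, so by Rolle its derivative has at least $n-1$ real zeros interlacing them; a degree count and sign-at-$\pm\infty$ argument supplies the last one). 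Composing $n$ such real-root-preserving operators finishes~(i). For parts~(ii) and~(iii): use instead \eqref{coeffMultConv}, $e_k(p\boxtimes_n q)=\binom{n}{k}^{-1}e_k(p)e_k(q)$, together with the matrix model $p\boxtimes_n q = \E_Q\det(xI - AQBQ^*)$; when $q\in\P_n(\R_{\ge0})$ one may take $B\succeq 0$, and then $AQBQ^*$ is similar (via $B^{1/2}Q^*$) to the Hermitian matrix $B^{1/2}Q^*AQB^{1/2}$, hence each $\det(xI-AQBQ^*)$ is real-rooted --- and nonnegative-rooted when additionally $A\succeq 0$. The remaining averaging step is again handled by the interlacing-family argument of \cite{MR4408504}. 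The single genuine difficulty in the whole proof is this averaging/interlacing step; everything else is bookkeeping with the three equivalent definitions already recorded in Subsection~\ref{sec:freeconvol}. I would present part~(i) through the Hermite--Poulain factorization (self-contained, no probability) and parts~(ii)--(iii) through the positive-semidefinite matrix model, citing \cite{szego1922,Walsh1922,MR4408504} for the interlacing lemma that legitimizes taking expectations.
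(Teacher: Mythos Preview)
The paper does not prove Proposition~\ref{prop:realrootedness}; it is stated as a classical result of Szeg\H{o} and Walsh with no accompanying argument, so there is no in-paper proof to compare against.

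Your proposal for part~(i) contains a genuine error. The claimed factorization
\[
D_p \;=\; c\prod_{k=1}^n\Bigl(I - \tfrac{1}{n}\lambda_k(p)\,\tfrac{\partial}{\partial x}\Bigr)
\]
is false in general. Take $n=2$ and $p(x)=(x-1)(x-2)$, so that $e_0(p)=1$, $e_1(p)=3$, $e_2(p)=2$; from \eqref{diffOperatorforP} one computes $D_p = I - \tfrac{3}{2}\,\tfrac{\partial}{\partial x} + \bigl(\tfrac{\partial}{\partial x}\bigr)^{2}$, whose symbol $t^{2} - \tfrac{3}{2}t + 1$ has discriminant $9/4-4=-7/4<0$ and therefore admits \emph{no} factorization over $\R$ whatsoever. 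So the roots of the symbol of $D_p$ are not $n/\lambda_k(p)$, and even the weaker fallback---that the symbol of $D_p$ is real-rooted whenever $p$ is, so that \emph{some} Hermite--Poulain factorization exists---already fails on this example. The Hermite--Poulain criterion characterizes constant-coefficient differential operators preserving real-rootedness on \emph{all} polynomials, whereas here $D_p$ need only preserve it on $\P_n(\R)$; that is a strictly weaker requirement, and the symbol being non-real-rooted does not contradict~(i). The classical route to~(i) in \cite{Walsh1922} is Grace's apolarity theorem (equivalently, the Walsh coincidence theorem), not a symbol factorization.

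For~(ii) and~(iii) your matrix-model-plus-interlacing outline is the argument of \cite{MR4408504} and is correct in outline, though the interlacing-family step you defer to is itself the substantive content of that reference; the original proofs in \cite{szego1922} again go through Grace's theorem, with no probability involved.
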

Taking into account that $p(-x)=p\boxtimes_n(x+1)^n$ (see \eqref{identityMult2} with $\alpha=-1$), a simple consequence of this proposition is that additionally the following ``rule of signs'' applies:
\begin{itemize}
\item $p,q\in \P_n(\rr_{\leq 0}) \ \Rightarrow \ p\boxtimes_n q\in \P(\rr_{\geq 0})$

\item $p\in \P_n(\rr_{\leq 0}),\ q\in \P_n(\rr_{\geq 0}) \ \Rightarrow \ p\boxtimes_n q\in \P(\rr_{\leq 0})$.
\end{itemize}
\begin{remark}
Multiplicative finite free convolution can also be considered in the framework of finite multiplier sequences \cite{MR1483603, MR0568321, MR2242036}, where the zero preservation results are known.
\end{remark}

\begin{definition}[Interlacing] 
Let
$$
p(x)=e_0(p)\prod_{j=1}^n \left(x-\lambda_j(p)\right) \in \P_n(\R), \quad \lambda_1(p) \leq \dots \leq \lambda_n(p),
$$
and 
$$
q(x)=e_0(q)\prod_{j=1}^m \left(x-\lambda_j(q)\right) \in \P_m(\R), \quad \lambda_1(q) \leq \dots \leq \lambda_m(q).
$$
We say that $q$ \textbf{interlaces} $p$ (or, equivalently, that \textbf{zeros of $q$ interlace zeros of $p$}, see, e.g.,~\cite{MR3051099}), and denote it $p \preccurlyeq q$, if 
\begin{equation} \label{interlacing1}
    m=n \quad \text{and} \quad \lambda_1(p) \leq \lambda_1(q) \leq \lambda_2(p) \leq \lambda_2(q) \leq \cdots \leq  \lambda_n(p) \leq \lambda_n(q),
\end{equation}
or if
\begin{equation} \label{interlacing2}
    m=n-1 \quad \text{and} \quad \lambda_1(p) \leq \lambda_1(q) \leq \lambda_2(p) \leq \lambda_2(q) \leq \cdots \leq  \lambda_{n-1}(p) \leq \lambda_{n-1}(q) \le \lambda_n(p).
\end{equation}
Furthermore, we use the notation $p \prec q$ when all inequalities in \eqref{interlacing1} or \eqref{interlacing2} are strict.
\end{definition}

The Hermite-Kakeya (a.k.a.~the Hermite–Kakeya–Obreschkoff or HKO) Theorem says that if $p$ and $q$ are non-constant polynomials with real coefficients. Then $p \prec q$ or $q\prec p$ if and only if, for all $\alpha, \beta \in \mathbb{R}$ such that $\alpha^2+\beta^2 \neq 0$, the polynomial $ \alpha p +\beta q $ has simple, real zeros (see, e.g.~\cite[Theorem 6.3.8]{MR1954841}. An extension to this result to allow for multiple roots and non-strict interlacing can also be found in the literature. For completeness, we present a proof, following the scheme from \cite{MR1179101,MR1268783}. 
In order to simplify the statement, we assume here that a constant polynomial is also real-rooted:
\begin{proposition} 
\label{lem:interlacing}
For $p, q\in \pp_n(\rr)\setminus \pp_{n-1}(\rr)$,  
$$
p \preccurlyeq q \text{ or } q \preccurlyeq p \quad \Leftrightarrow \quad \alpha p+\beta q\in \pp_n(\rr)  \quad \text{for every } \alpha,\beta\in \rr.
$$
Additionally, if $p$ and $q$ are relatively prime (i.e., they do not share a root), then we can replace $\preccurlyeq$ above by $\prec$.
\end{proposition}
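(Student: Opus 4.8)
The plan is to prove the equivalence by establishing the two implications separately, treating the non-strict version as the main statement and deducing the strict version at the end. For the direction ($\Rightarrow$), I would assume without loss of generality that $p\preccurlyeq q$, so both have degree $n$ (the degree-$(n-1)$ case of \eqref{interlacing2} is excluded by hypothesis since $q\notin\pp_{n-1}$). Fix $\alpha,\beta\in\rr$ with $(\alpha,\beta)\neq(0,0)$; the case $\beta=0$ is trivial, so assume $\beta\neq 0$ and, rescaling, consider $r_t(x):=p(x)+t\,q(x)$ for $t\in\rr$. The idea is a sign-change count: because the roots interlace, on each of the $n-1$ intervals $[\lambda_k(p),\lambda_k(q)]$ (or the reversed ones, depending on which polynomial is ``on top'') the two polynomials $p$ and $q$ have opposite-sign behavior at the endpoints in a way that forces $r_t$ to have a root in the closure of each such interval; counting multiplicities at shared roots and using that $\deg r_t = n$ (the leading coefficients of $p,q$ have the same sign, or one can handle the sign separately) accounts for all $n$ roots, which are therefore real. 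I would phrase this cleanly using the standard trick of looking at $q/p$ as a real rational function with simple poles and interlaced zeros, hence monotone between consecutive poles, so that $q(x)/p(x)=-1/t$ has exactly one solution in each pole-interval.

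For the converse ($\Leftarrow$), I would argue the contrapositive: if neither $p\preccurlyeq q$ nor $q\preccurlyeq p$ holds, I must produce $\alpha,\beta$ with $\alpha p+\beta q\notin\pp_n(\rr)$. Here I would again pass to the rational function $g=q/p$ (after dividing out any common factor, which is harmless for counting non-real roots of $\alpha p+\beta q$), and use that failure of interlacing means the real zeros of $p$ and $q$ are not arranged alternately; a continuity/intermediate-value argument then shows that for a suitable value $c$ the equation $g(x)=c$, equivalently $q(x)-c\,p(x)=0$, has fewer than $n$ real solutions counted with multiplicity, hence has a non-real root. Concretely, one tracks how the number of real roots of $q-cp$ changes as $c$ runs over $\rr\cup\{\infty\}$: it is constant and equal to $n$ on the whole line precisely when the zeros interlace, and otherwise drops somewhere. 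I would follow the bookkeeping in \cite{MR1179101,MR1268783} rather than reprove it from scratch.

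Finally, for the strict statement, assuming $p$ and $q$ are relatively prime: in the ($\Rightarrow$) direction the interlacing is already strict once there are no common roots (equalities in \eqref{interlacing1} would force a shared zero), and the sign-change argument then places a root of $\alpha p+\beta q$ strictly inside each open interval $(\lambda_k(p),\lambda_{k+1}(p))$ — or one can invoke that $\alpha p+\beta q$ cannot have a multiple real root $x_0$, since $\alpha p(x_0)+\beta q(x_0)=\alpha p'(x_0)+\beta q'(x_0)=0$ with $(p(x_0),p'(x_0))$ and $(q(x_0),q'(x_0))$ linearly independent (as $p,q$ share no root) would be a contradiction. For ($\Leftarrow$) with strict conclusion one notes that ``$\alpha p+\beta q$ has simple real zeros for all $(\alpha,\beta)$'' is formally stronger than the non-strict hypothesis, so the non-strict conclusion $p\preccurlyeq q$ or $q\preccurlyeq p$ holds, and it must in fact be strict, again because a common root of $p$ and $q$ (absent by hypothesis) or a coincident pair $\lambda_k(p)=\lambda_k(q)$ would produce a multiple root of some combination.

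The main obstacle I anticipate is the converse direction: carefully controlling the number of real roots of the one-parameter family $q - c\,p$ as $c$ varies, making sure the count is done with the right multiplicity conventions and that the ``drop'' in the real-root count is correctly attributed to the failure of interlacing rather than to degree degeneracy — the hypothesis $p,q\in\pp_n(\rr)\setminus\pp_{n-1}(\rr)$ is exactly what rules out the spurious degenerate cases, and I would need to use it explicitly at that point.
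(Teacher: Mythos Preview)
Your plan is correct in outline but follows a genuinely different route from the paper's proof in both directions.

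For ($\Rightarrow$), the paper does \emph{not} argue by sign changes or monotonicity of $q/p$. Instead it reduces the non-strict case to the strict one by an $\varepsilon$-perturbation of the roots (shifting $\lambda_k(p)$ and $\lambda_k(q)$ by different small amounts to force $p_\varepsilon\prec q_\varepsilon$), invokes the classical Hermite--Kakeya--Obreschkoff theorem as a black box for the strict case, and then passes to the limit $\varepsilon\downarrow 0$. Your direct sign-change/monotonicity argument is more self-contained (it does not need HKO externally) but is messier when roots coincide; the paper's perturbation trick handles the non-strict interlacing uniformly with almost no bookkeeping.

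For ($\Leftarrow$), the paper also argues the contrapositive but does it \emph{locally} rather than by your proposed global root-count of $q-cp$: assuming $p,q$ coprime, it isolates either (i) two consecutive roots of $p$ with no root of $q$ in between, or (ii) a multiple root of $p$ not shared by $q$, and in each case writes a Taylor expansion of $r_\varepsilon=p-(m+\varepsilon)q$ (resp.\ $q-\varepsilon p$) near the critical point to show that for small $\varepsilon$ of suitable sign the nearby roots are forced off the real axis. This is more explicit than your ``track the real-root count as $c$ varies'' plan, though both ultimately rely on the same Dedieu references.

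One small gap in your strict addendum: the claim that $(p(x_0),p'(x_0))$ and $(q(x_0),q'(x_0))$ are linearly independent merely because $p,q$ share no root is not justified as stated; what you actually need is that the Wronskian $pq'-p'q$ does not vanish, which follows from strict interlacing but not from coprimality alone. This is easily fixed, but the reasoning you wrote is incomplete.
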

\begin{proof}
(Sufficiency)  Without loss of generality, let $p,q$ be monic and assume that $p \preccurlyeq q$, so that their roots, enumerated with account of their multiplicities, satisfy 
$$
\lambda_1(p) \leq \lambda_1(q) \leq \lambda_2(p) \leq \lambda_2(q) \leq \cdots \leq  \lambda_n(p) \leq \lambda_n(q).
$$
For $\varepsilon>0$, define 
$$
p_\varepsilon(x):= \prod_{k=1}^n \left(x-\lambda_{k,\varepsilon}(p) \right), \qquad  q_\varepsilon(x):= \prod_{k=1}^n \left(x-\lambda_{k,\varepsilon}(q) \right),
$$
with
$$
\lambda_{k,\varepsilon}(p):=  \lambda_k(p)+ (2k-2)\varepsilon ,  \qquad  \lambda_{k,\varepsilon}(q)  :=  \lambda_k(q)+  (2k-1)\varepsilon , \qquad k=1,\dots, n.
$$
By construction,
$$
\lambda_{1,\varepsilon}(p) < \lambda_{1,\varepsilon}(q) < \lambda_{2,\varepsilon}(p) < \lambda_{2,\varepsilon}(q) < \cdots <  \lambda_{n,\varepsilon}(p) < \lambda_{n,\varepsilon}(q),
$$
so that $p_\varepsilon \prec q_\varepsilon$.
By the HKO Theorem, for $\alpha, \beta\in \rr$, $\alpha^2+\beta^2 \neq 0$, we have that $\alpha p_\varepsilon+\beta q_\varepsilon\in \pp_n(\rr)$. Since $\lim_{\varepsilon\downarrow 0 }p_\downarrow=p$ and $\lim_{\varepsilon\downarrow 0}q_\downarrow=q$, we conclude that
$$
\alpha p+\beta q=\lim_{\varepsilon\downarrow 0}\, (\alpha p_\varepsilon+\beta q_\varepsilon)\in \pp_n(\rr)\quad \text{for } \alpha,\beta\in \rr, 
\; \alpha^2+\beta^2 \neq 0.
$$

(Necessity) Let $p, q$ be of degree $n$ with real zeros, such that neither $p \preccurlyeq q$ nor $q \preccurlyeq p$ holds. 

If we assume that $p$ and $q$ are relatively prime, this means at least one of the following two possibilities:
\begin{enumerate}[(i)]
    \item there are two consecutive zeros (without loss of generality, of $p$), say $\lambda_i(p)<\lambda_{i+1}(p)$, such that $q(x)\neq 0$ for $\lambda_i(p)<x<\lambda_{i+1}(p)$. We can assume that both $p,q>0$ for $\lambda_i(p)<x<\lambda_{i+1}(p)$. Then $p/q$ achieves its global minimum in $[\lambda_i(p),\lambda_{i+1}(p)]$ at an interior point $\lambda_i(p)<x_0<\lambda_{i+1}(p)$. Denote the value of this minimum by $m>0$, and consider the polynomial 
$$
r_\varepsilon(x):= p(x) - (m+\varepsilon) q(x), \quad \varepsilon\in \R.
$$
By construction, $r_0(x)<0$ for $\lambda_i(p)<x<\lambda_{i+1}(p)$, $x\neq x_0$, and $r_0(x_0)=0$. Hence, $x_0$ is a root of even order of $r_0$: there exists $k\in \N$ such that $r_0^{(j)}(x_0)=0$, $j=0, 1, \dots, 2k-1$, and $r_0^{(2k)}(x_0)\neq 0$.\footnote{\, Here $r^{(j)}(x)$ denotes the $j$-th derivative of $r$ with respect to $x$.} Expanding the polynomial $r_\varepsilon$ at $x=x_0$, we have that
\begin{align*}
r_\varepsilon(x) & = \sum_{j=0}^n \frac{r_0^{(j)}(x_0)}{j!}(x-x_0) - \varepsilon \sum_{j=0}^n \frac{q^{(j)}(x_0)}{j!}(x-x_0)  \\
&= \frac{r_0^{(2k)}(x_0)}{(2k)!}(x-x_0)^{2k} \left( 1+ \mathcal O(x-x_0)\right) - \varepsilon q(x_0) \left( 1+ \mathcal O(x-x_0)\right),
\end{align*}
 so that the roots $\lambda(r_\varepsilon)$ of $r_\varepsilon$ in the neighborhood of $x=x_0$ have the asymptotic expansion
$$
\lambda(r_\varepsilon) = x_0 + \left(\frac{(2k)!\,  q(x_0)}{r_0^{(2k)}(x_0)} \, \varepsilon \right)^{1/(2k)} \left( 1+ o(1)\right), \quad \varepsilon\to 0.
$$
It shows that the polynomial $r_\varepsilon$ cannot have all its roots real for all small values of $\varepsilon\in \R$, which proves the necessity in this situation.

\item there are two zeros (without loss of generality, of $p$), such that  $\lambda_i(p)=\lambda_{i+1}(p)\neq \lambda_i(q)$, which means that for $x_0=\lambda_i(p)$, $q(x_0)\neq 0$, and there is $2\le k\le n$ such that for $j<k$, $p^{(j)}(x_0)= 0$, and $p^{(k)}(x_0)\ne 0$. 

Considering again 
$$
r_\varepsilon(x):= q(x) -  \varepsilon p(x), \quad \varepsilon\in \R;
$$
and expanding it at $x=x_0$, we have that 
\begin{align*}
r_\varepsilon(x) & = \sum_{j=0}^n \frac{p^{(j)}(x_0)}{j!}(x-x_0) - \varepsilon \sum_{j=0}^n \frac{q^{(j)}(x_0)}{j!}(x-x_0)  \\
&= \frac{p^{(k)}(x_0)}{k!}(x-x_0)^{k} \left( 1+ \mathcal O(x-x_0)\right) - \varepsilon q(x_0) \left( 1+ \mathcal O(x-x_0)\right),
\end{align*}
This means that the roots $\lambda(r_\varepsilon)$ of $r_\varepsilon$ in the neighborhood of $x=x_0$ have the asymptotic expansion
$$
\lambda(r_\varepsilon) = x_0 + \left(\frac{k!\,  q(x_0)}{r_0^{(k)}(x_0)} \, \varepsilon \right)^{1/k} \left( 1+ o(1)\right), \quad \varepsilon\to 0,
$$
showing once again that the polynomial $r_\varepsilon$ cannot have all its roots real for all small values of $\varepsilon\in \R$, which proves the necessity in this situation.
\end{enumerate}

Finally, we are ready to drop the assumption that $p$ and $q$ are relatively prime: denote by $r$ the maximum common divisor of $p$ and $q$, necessarily real-rooted, and denote $p=r \widetilde p$, $q=r \widetilde q$, where $\widetilde p$ and $\widetilde q$ are of the same degree and relatively prime. The assumption that neither $p \preccurlyeq q$ nor $q \preccurlyeq p$ holds implies that  $\widetilde p \not \preccurlyeq \widetilde q$,  $\widetilde q \not \preccurlyeq \widetilde p$. Then, as follows from the analysis of cases (i)--(ii) of the proof, not every linear combination $\alpha\widetilde p+\beta\widetilde q$,  $\alpha,\beta\in \rr$, such that $\alpha\widetilde p+\beta\widetilde q \not \equiv 0 $, is real-rooted. It remains to notice that the same conclusion applies then to  
$$
\alpha  p+\beta  q = r\left( \alpha\widetilde p+\beta\widetilde q\right),
$$
and the statement is proved.
\end{proof}

From here and the linearity of the free finite convolution we easily obtain the following interlacing-preservation property:
\begin{proposition}[Preservation of interlacing]
\label{lem:preservinginterlacingMult}
For $p, \tilde p, q\in \pp_n(\rr)\setminus \pp_{n-1}(\rr)$,  
if  $p \preccurlyeq \widetilde{p}$, then 
$$
q \in \P_n(\R) \quad  \Rightarrow \quad p\boxplus_n q \preccurlyeq \widetilde{p} \boxplus_n q
$$
and 
$$
q \in \P_n(\R_{\ge 0}) \quad  \Rightarrow \quad p\boxtimes_n q \preccurlyeq \widetilde{p} \boxtimes_n q.
$$
\end{proposition}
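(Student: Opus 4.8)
The plan is to reduce the claim to the Hermite--Kakeya-type dichotomy of Proposition~\ref{lem:interlacing}, using the linearity of the two convolutions in their first argument together with the real-rootedness preservation of Proposition~\ref{prop:realrootedness}, and then to pin down which of the two possible interlacings actually occurs by a continuity argument.

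First I would dispose of the degenerate case $p=c\,\widetilde p$: then $p$ and $\widetilde p$ have the same zeros and the two convolved polynomials are again proportional, so both conclusions hold trivially. Assuming from now on that $p$ and $\widetilde p$ are not proportional, note that $p\preccurlyeq\widetilde p$ together with Proposition~\ref{lem:interlacing} gives $\alpha p+\beta\widetilde p\in\P_n(\R)$ for all $\alpha,\beta\in\R$. By linearity of $\boxplus_n$ one has $\alpha(p\boxplus_n q)+\beta(\widetilde p\boxplus_n q)=(\alpha p+\beta\widetilde p)\boxplus_n q$, and this lies in $\P_n(\R)$ by Proposition~\ref{prop:realrootedness}(i). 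Since $e_0(p\boxplus_n q)=e_0(p)e_0(q)\neq0$, both $p\boxplus_n q$ and $\widetilde p\boxplus_n q$ have exact degree $n$, so the converse half of Proposition~\ref{lem:interlacing} forces $p\boxplus_n q\preccurlyeq\widetilde p\boxplus_n q$ or $\widetilde p\boxplus_n q\preccurlyeq p\boxplus_n q$; the same reasoning with Proposition~\ref{prop:realrootedness}(ii) gives the corresponding dichotomy for $\boxtimes_n$ when $q\in\P_n(\R_{\ge0})$.

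To decide the direction, I would deform $q$. For $q$ of exact degree $n$, the vanishing criterion for $\boxplus_n$ recorded after Definition~\ref{def:additiveInverse} shows $(p-c\widetilde p)\boxplus_n q\neq0$ for every $c$, hence $p\boxplus_n q$ and $\widetilde p\boxplus_n q$ are never proportional; since two real-rooted polynomials of common degree $n$ that interlace each other in both directions must share all their zeros (and hence be proportional), exactly one of the two interlacings holds in our situation. Now connect $q$ to $(x-a)^n$ by a path $q_t$, $t\in[0,1]$, with $q_0=q$ and $q_1=(x-a)^n$, inside the set of monic real-rooted polynomials of degree $n$ (for instance by linearly interpolating the multisets of zeros); this set is path-connected. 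Along the path $p\boxplus_n q_t$ and $\widetilde p\boxplus_n q_t$ remain real-rooted of exact degree $n$ and never proportional, their zeros move continuously in $t$, and $\preccurlyeq$ is a closed condition, so the valid direction is locally constant, hence constant along the path. At the endpoint $p\boxplus_n q_1=p(x-a)$ and $\widetilde p\boxplus_n q_1=\widetilde p(x-a)$ by~\eqref{shift}, and a common real shift preserves interlacing; therefore the valid direction is $p\boxplus_n q\preccurlyeq\widetilde p\boxplus_n q$.

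For $\boxtimes_n$ the same scheme works with two modifications. Non-proportionality of $p\boxtimes_n q$ and $\widetilde p\boxtimes_n q$ now requires $e_k(q)\neq0$ for all $k$, because $e_k(p\boxtimes_n q)=\binom{n}{k}^{-1}e_k(p)e_k(q)$, and this holds when $q$ has all zeros in $\R_{>0}$; so I would first run the deformation argument inside the monic polynomials with strictly positive zeros, anchoring at $(x-a)^n$ with $a>0$, where $p\boxtimes_n(x-a)^n=a^n p(x/a)$ by~\eqref{identityMult2} and dilation by $a>0$ preserves interlacing. For a general $q\in\P_n(\R_{\ge0})$ one then lets the zeros at the origin be reached only in the limit and uses the continuity of zeros, the non-vanishing of the leading coefficient $e_0(p)e_0(q)$, and the closedness of $\preccurlyeq$ to pass to the limit. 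The only genuinely delicate part of the argument is this last step, converting the dichotomy of the first half into a definite direction; the remainder is bookkeeping once Propositions~\ref{prop:realrootedness} and~\ref{lem:interlacing} are in hand.
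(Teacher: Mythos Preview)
Your argument is correct: the first half (deriving the dichotomy from Propositions~\ref{lem:interlacing} and~\ref{prop:realrootedness} via bilinearity) matches the paper exactly, and your deformation argument to pin down the direction is sound. The connectedness step works because the two sets $\{t: p\boxplus_n q_t\preccurlyeq \widetilde p\boxplus_n q_t\}$ and its reverse are closed, cover $[0,1]$, and are disjoint by your non-proportionality observation.

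However, the paper resolves the direction far more cheaply. Instead of a homotopy, it simply compares the first elementary symmetric functions: $p\preccurlyeq\widetilde p$ forces $e_1(p)/e_0(p)\le e_1(\widetilde p)/e_0(\widetilde p)$ (the sum of the roots of $p$ is at most that of $\widetilde p$), and then one computes directly from \eqref{coeffAdditiveConv} and \eqref{coeffMultConv} that
\[
\frac{e_1(p\boxplus_n q)}{e_0(p\boxplus_n q)}=\frac{e_1(p)}{e_0(p)}+\frac{e_1(q)}{e_0(q)},\qquad
\frac{e_1(p\boxtimes_n q)}{e_0(p\boxtimes_n q)}=\frac{1}{n}\,\frac{e_1(p)}{e_0(p)}\cdot\frac{e_1(q)}{e_0(q)}.
\]
In the additive case the inequality is preserved automatically; in the multiplicative case $q\in\P_n(\R_{\ge0})$ gives $e_1(q)/e_0(q)\ge0$, so the inequality is again preserved. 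Since interlacing (in either direction) is already known to hold, the inequality on $e_1/e_0$ singles out $p\boxplus_n q\preccurlyeq\widetilde p\boxplus_n q$ (resp.\ $\boxtimes_n$). This avoids the path-connectedness, non-proportionality, and limiting arguments entirely, and in particular handles $q\in\P_n(\R_{\ge0})$ with zeros at the origin without any separate approximation step. Your approach has the merit of being more conceptual and potentially reusable in settings where the coefficients are less explicit, but here the paper's one-line computation is the natural choice.
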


\begin{proof}
Since $p\preccurlyeq\tilde{p}$, then by Proposition \ref{lem:interlacing}, $\alpha p+\beta \tilde{p}\in \pp_n(\rr)$ for every $\alpha,\beta\in \rr$. By assertions \textit{(i)} and \textit{(ii)} of Proposition~\ref{prop:realrootedness}, 
\begin{align*}
  \alpha (p  \boxdot_n  q) + \beta (\tilde{p}   \boxdot_n  q) & = (\alpha p+\beta \tilde{p}) \boxdot_n  q\in \pp_n(\rr), 
\end{align*}
where $ \boxdot_n $ represents either $\boxplus_n$ or $ \boxtimes_n $.
Thus, again by Proposition \ref{lem:interlacing}, either $p  \boxdot_n  q\preccurlyeq \tilde{p}  \boxdot_n  q $ or $\tilde{p}  \boxdot_n  q\preccurlyeq  p \boxdot_n  q$.

However, by \eqref{zerossymmetric}, $p\preccurlyeq\tilde{p}$ implies that $e_1(p) / e_0(p)\leq e_1(\tilde{p})/ e_0(\tilde{p})$. In the case of $\boxplus_n$, 
$$
\frac{e_1(p\boxplus_n q)}{e_0( p \boxplus_n q) }
=\frac{e_1(p)e_0( q ) +e_1(q) e_0( p ) }{e_0( p ) e_0(q)}
=\frac{e_1(p)  }{e_0( p )  } + \frac{e_1(q)  }{e_0( q )  } \leq \frac{e_1(\tilde p)  }{e_0( \tilde p )  } + \frac{e_1(q)  }{e_0( q )  } = \frac{e_1(p\boxplus_n q)}{e_0( \tilde p \boxplus_n q) } ,
$$ 
which implies that $p\boxplus_n q\preccurlyeq \tilde{p} \boxplus_n q $.

Analogously, if  $q\in \pp_n(\rr_{\ge 0})$, we have that $e_1(q)/e_0(q)\geq 0$, so that 
$$
\frac{e_1(p\boxtimes_n q)}{e_0(p\boxtimes_n q)} = \frac{1}{n}\, \frac{e_1(p) e_1(q)}{e_0(p ) e_0(q)}\leq \frac{1}{n}\, \frac{e_1(\tilde p) e_1(q)}{e_0(\tilde p ) e_0(q)} = \frac{e_1(\tilde p\boxtimes_n q)}{e_0(\tilde p\boxtimes_n q)},
$$ which, once again, means that $p\boxtimes_n q\preccurlyeq \tilde{p} \boxtimes_n q $. 
\end{proof}
\begin{remark}\label{preserveinterlacing.negativezeros}
Noticing that $p \preccurlyeq q$ if and only if $q(-x) \preccurlyeq p(-x)$ and that $p(x) \boxtimes_n q(-x)=[p\boxtimes_n q] (-x)$, we can easily extend Proposition \ref{lem:preservinginterlacingMult} to include polynomials with negative real roots. Namely, if $p, \widetilde{p}\in \monicpolreal$ and $p \preccurlyeq \widetilde{p}$, then for a polynomial $q$ of degree $n$, 
$$
q \in \P_n(\R_{\le 0}) \quad  \Rightarrow \quad \widetilde{p} \boxtimes_n q \preccurlyeq p \boxtimes_n q.
$$
\end{remark}

\begin{definition}
Given $p\in\P_n$ with $n\geq 2$, we define its (absolute) \textbf{root separation} or \textbf{mesh} as the minimal distance between its roots:
$$
\mesh (p)=\min \{|\lambda_i(p)-\lambda_j(p)| : 1\leq i <j \leq n\}
$$
(see e.g.~\cite{Branden16}).
\end{definition}
Since $\mesh(p)>r$ if and only if $p(x)\preccurlyeq p(x-r)$, and $p(x-r)=p(x) \boxplus_n(x-r)^n$, it follows that the preservation of interlacing implies the zero separation does not decrease under finite additive free convolution:
\begin{proposition}[Preservation of mesh]
\label{lem:preserving.mesh}
If $p,q\in \P_n(\R)$, both of degree $n$,  then 
$\mesh(p\boxplus_n q) \geq \mesh(p)$.
\end{proposition}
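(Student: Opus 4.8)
The plan is to obtain this as an immediate consequence of the preservation of interlacing (Proposition~\ref{lem:preservinginterlacingMult}) together with the translation description of the mesh recalled just before the statement. The underlying elementary fact, which I would isolate first, is the following: for a polynomial $f\in\P_n(\R)$ of degree exactly $n$ and a real number $r\ge 0$ one has
$$
f\preccurlyeq f(\cdot-r)\qquad\Longleftrightarrow\qquad \mesh(f)\ge r .
$$
Indeed, if the roots of $f$ are $\lambda_1\le\cdots\le\lambda_n$, those of $f(\cdot-r)$ are $\lambda_1+r\le\cdots\le\lambda_n+r$, and the interlacing chain $\lambda_1\le\lambda_1+r\le\lambda_2\le\lambda_2+r\le\cdots\le\lambda_n\le\lambda_n+r$ amounts precisely to $0\le r$ together with $r\le\lambda_{k+1}-\lambda_k$ for $k=1,\dots,n-1$, i.e.\ to $r\le\mesh(f)$. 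In particular, when $\mesh(p)=0$ the asserted inequality $\mesh(p\boxplus_n q)\ge 0$ is vacuous, so one may assume $\mesh(p)>0$.

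Next I would verify that $p\boxplus_n q$ is a polynomial of degree exactly $n$ with only real zeros, so that its mesh is defined and Proposition~\ref{lem:preservinginterlacingMult} is applicable. Since $\deg p=\deg q=n$ we have $e_0(p),e_0(q)\neq 0$, hence $e_0(p\boxplus_n q)=e_0(p)e_0(q)\neq 0$; and $p\boxplus_n q\in\P_n(\R)$ by Proposition~\ref{prop:realrootedness}(i). Now fix any $r$ with $0\le r\le\mesh(p)$ and put $\widetilde p(x):=p(x-r)$, a polynomial of degree $n$ with real zeros. By the fact above, $p\preccurlyeq\widetilde p$, so Proposition~\ref{lem:preservinginterlacingMult}, applied with the additive convolution and the polynomial $q\in\P_n(\R)$, yields
$$
p\boxplus_n q\;\preccurlyeq\;\widetilde p\boxplus_n q .
$$
It remains to recognize the right-hand side as a translate of $p\boxplus_n q$. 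Using the derivative formula \eqref{defAdditiveConv} and the identity $[p(\cdot-r)]^{(i)}(x)=p^{(i)}(x-r)$ (equivalently, one may use \eqref{shift} together with the commutativity and associativity of $\boxplus_n$ furnished by \eqref{thirdDefAdditiveConv}),
$$
[\widetilde p\boxplus_n q](x)=\frac{1}{n!}\sum_{i=0}^{n}p^{(i)}(x-r)\,q^{(n-i)}(0)=[p\boxplus_n q](x-r).
$$
Hence $p\boxplus_n q\preccurlyeq (p\boxplus_n q)(\cdot-r)$, and applying the fact above once more, this time to $f=p\boxplus_n q$, gives $\mesh(p\boxplus_n q)\ge r$. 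Since this holds for every $r$ with $0\le r\le\mesh(p)$, in particular for $r=\mesh(p)$, we conclude $\mesh(p\boxplus_n q)\ge\mesh(p)$.

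I do not anticipate a genuine difficulty in this argument; it is essentially a careful unwinding of the sentence preceding the statement. The only two points that deserve a moment's attention are the non-strict form of the mesh--interlacing equivalence (so that the possibility of repeated roots in $p$, or in $p\boxplus_n q$, causes no trouble) and the commutation of $\boxplus_n$ with a translation of one of its arguments, both of which are immediate from the formulas already recorded in Section~\ref{sec:freeconvol}.
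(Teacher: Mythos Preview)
Your proof is correct and follows essentially the same route as the paper: translate the mesh condition into an interlacing $p\preccurlyeq p(\cdot-r)$, push this through $\boxplus_n q$ via preservation of interlacing, and identify the result as a shift of $p\boxplus_n q$. You are somewhat more careful than the paper in spelling out the non-strict equivalence $\mesh(f)\ge r\Leftrightarrow f\preccurlyeq f(\cdot-r)$ and in checking that $p\boxplus_n q$ has degree exactly $n$, but the argument is the same.
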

\begin{proof}
If $\mesh(p)>r$, then $p\preccurlyeq [p \boxplus_n(x-r)^n]$, and by preservation of interlacing, $[p\boxplus q] \preccurlyeq [p \boxplus_n(x-r)^n \boxplus_n q]$. Since the second polynomial is simply $[p\boxplus q](x-r)$, we conclude that $\mesh(p\boxplus q)>r$.
\end{proof}

A useful corollary of this result is
\begin{corollary} \label{cor:meshincreas}
If all roots of $p\in \P(\rr)$ are simple and if $q$ is the (unique) inverse of $p$ under additive convolution, then $q\notin \P(\rr)$. 
\end{corollary}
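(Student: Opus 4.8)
The plan is to argue by contradiction. Suppose $p\in\P_n(\R)$ has all simple roots and that its additive inverse $q$ (the unique polynomial with $p\boxplus_n q=x^n$) nevertheless lies in $\P_n(\R)$. Both $p$ and $q$ have degree exactly $n$: indeed $e_0(p\boxplus_n q)=e_0(p)e_0(q)$, and $x^n$ has $e_0=1\neq 0$, so $e_0(p),e_0(q)\neq 0$. Since $p$ has $n$ simple roots, $\mesh(p)>0$; pick any $r$ with $0<r<\mesh(p)$. Applying Proposition~\ref{lem:preserving.mesh} to the pair $p,q$ gives
$$
\mesh(x^n)=\mesh(p\boxplus_n q)\ge \mesh(p)>r>0.
$$
But $x^n$ has a single root $0$ of multiplicity $n\ge 2$, so $\mesh(x^n)=0$, a contradiction. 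Hence $q\notin\P_n(\R)$, which is the claim (with $n\ge 2$ implicit from the definition of mesh).

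The only small gap to fill is that Proposition~\ref{lem:preserving.mesh} as stated requires both $p$ and $q$ to have degree exactly $n$; I addressed this above via the $e_0$ computation, so no obstacle remains there. One might also note explicitly that $n\ge 2$ is needed for $\mesh$ to be defined at all (for $n=1$ every monic linear polynomial is its own additive identity shifted, and the statement is vacuous or false), which matches the standing assumption $n\geq 2$ in the definition of $\mesh$. I do not expect any serious obstacle; the proof is a two-line consequence of the preservation-of-mesh proposition, and the main point is simply to recognize that $x^n$ has the worst possible root separation, namely zero.
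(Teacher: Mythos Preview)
Your proof is correct and is exactly the argument the paper intends: the corollary is stated immediately after Proposition~\ref{lem:preserving.mesh} without proof, and your contradiction via $\mesh(x^n)=0<\mesh(p)\le\mesh(p\boxplus_n q)$ is precisely the intended two-line deduction. The introduction of the auxiliary $r$ is unnecessary since the proposition already gives the inequality $\mesh(p\boxplus_n q)\ge\mesh(p)$ directly, but this is harmless; your checks that $\deg q=n$ and that $n\ge 2$ is implicitly required are appropriate.
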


Finite free multiplicative convolution has similar properties, now using the relative separation or logarithmic mesh:
\begin{definition}
Given $p\in \pp(\rr_{>0})$ with $n\geq 2$, we define its \textbf{logarithmic mesh} as the minimal ratio (bigger than 1) between its roots:
$$
\lmesh(p)=\min \{ \lambda_i(p)/\lambda_{i+1}(p)  : 1\leq i <  n\},
$$
assuming $\lambda_1(p)\geq \lambda_2(p)\geq \dots \geq \lambda_n(p)>0$.
\end{definition}

Again, the observation that for $r>0$,  $\lmesh(p)>r$ if and only if $p(x) \preccurlyeq [p(x) \boxtimes_n(x-r)^n]$, yields
\begin{proposition}[Preservation of lmesh, \cite{MR2755692}]
\label{lem:preserving.lmesh}
If $p,q\in \pp(\rr_{>0})$, then
$\lmesh(p\boxtimes q) \geq \lmesh(p)$.
\end{proposition}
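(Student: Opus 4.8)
The plan is to follow verbatim the scheme used for the preservation of the additive mesh in Proposition~\ref{lem:preserving.mesh}, with $\boxplus_n$ replaced by $\boxtimes_n$, additive shifts by multiplicative rescalings, and $\mesh$ by $\lmesh$. The only inputs needed are the dictionary recorded just above the statement, namely that for $r>1$ one has $\lmesh(g)>r$ if and only if $g\preccurlyeq [g(x)\boxtimes_n(x-r)^n]$; the rescaling identity \eqref{identityMult2}, which gives $g(x)\boxtimes_n(x-r)^n=r^n g(x/r)$; the preservation of interlacing under $\boxtimes_n$ (Proposition~\ref{lem:preservinginterlacingMult}); and the commutativity and associativity of $\boxtimes_n$, which are immediate from the coefficient formula \eqref{coeffMultConv}. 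As in that earlier proof, I treat $p,q$ as polynomials of degree exactly $n$.

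The argument itself would run as follows. First I would record that $p\boxtimes_n q$ has only positive roots, so that $\lmesh(p\boxtimes_n q)$ is meaningful: by Proposition~\ref{prop:realrootedness}\,(iii) it lies in $\P_n(\R_{\ge 0})$, and since $e_n(p\boxtimes_n q)=e_n(p)e_n(q)\neq 0$ (both $p$ and $q$ having only nonzero roots), $0$ is not a root. Next, fix any $r$ with $1<r<\lmesh(p)$. By the dictionary, $p\preccurlyeq [p(x)\boxtimes_n(x-r)^n]$, and since $q\in\P_n(\R_{>0})\subset\P_n(\R_{\ge 0})$, Proposition~\ref{lem:preservinginterlacingMult} yields
$$
p\boxtimes_n q\ \preccurlyeq\ \bigl[p(x)\boxtimes_n(x-r)^n\bigr]\boxtimes_n q .
$$
By commutativity and associativity of $\boxtimes_n$ the right-hand side equals $\bigl[p\boxtimes_n q\bigr](x)\boxtimes_n(x-r)^n$, which by \eqref{identityMult2} is $r^n\,[p\boxtimes_n q](x/r)$. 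Hence $p\boxtimes_n q\preccurlyeq \bigl[(p\boxtimes_n q)(x)\boxtimes_n(x-r)^n\bigr]$, and reading the dictionary in the reverse direction gives $\lmesh(p\boxtimes_n q)>r$. Letting $r\uparrow\lmesh(p)$ yields $\lmesh(p\boxtimes_n q)\ge\lmesh(p)$.

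I do not expect a genuine obstacle here: once the logarithmic-mesh-to-interlacing dictionary is available, the statement is a formal consequence of the interlacing-preservation property. The only points requiring a little care are the two bookkeeping items above: checking that $\lmesh(p\boxtimes_n q)$ is well defined (i.e.\ that $p\boxtimes_n q\in\P_n(\R_{>0})$, and, for the dictionary to apply cleanly, that one is in the regime of simple roots), and the restriction $r>1$ inherent in the dictionary, which is why the conclusion is reached by taking a supremum over $r\in(1,\lmesh(p))$ rather than by a single application. If one wanted to avoid even the supremum, one could instead invoke the strict form of Proposition~\ref{lem:preservinginterlacingMult} together with the strict version of the dictionary ($\lmesh(g)\ge r \Leftrightarrow g\preccurlyeq[g(x)\boxtimes_n(x-r)^n]$), but the supremum argument is cleaner and mirrors Proposition~\ref{lem:preserving.mesh} exactly.
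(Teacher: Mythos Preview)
Your proposal is correct and follows essentially the same approach as the paper's own proof: translate $\lmesh(p)>r$ into the interlacing $p\preccurlyeq p\boxtimes_n(x-r)^n$, apply preservation of interlacing under $\boxtimes_n q$, regroup, and translate back. The paper's version is terser (it omits the verification that $p\boxtimes_n q\in\P_n(\R_{>0})$ and the explicit supremum step), but the argument is the same.
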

\begin{proof}
If $\lmesh(p)>r$ then $p\preccurlyeq [p \boxtimes_n(x-r)^n]$, and by preservation of interlacing, $[p\boxtimes q]\preccurlyeq [p \boxtimes_n(x-r)^n \boxtimes_n q]=[p\boxtimes q] \boxtimes_n(x-r)^n$, so $\lmesh(p\boxtimes q)>r$.
\end{proof}
 
Again, as in the case of Corollary~\ref{cor:meshincreas}, we have  
\begin{corollary} \label{cor:Lmeshincreas}
If all roots of $p\in \P(\R_{> 0})$ are simple and if $q$ is an inverse of $p$ under multiplicative convolution, then $q\notin \P(\rr_{>0})$. 
\end{corollary}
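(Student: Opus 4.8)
The plan is to argue by contradiction, following verbatim the scheme of Corollary~\ref{cor:meshincreas} but with the additive data $(\mesh,\boxplus_n,x^n)$ replaced by the multiplicative data $(\lmesh,\boxtimes_n,(x-1)^n)$. Assume (as is implicit in the definition of $\lmesh$) that $n=\deg p\ge 2$, and suppose for contradiction that an inverse $q$ of $p$ under $\boxtimes_n$ lies in $\P(\R_{>0})$.

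First I would fix degrees and signs. Since $p\in\P_n(\R_{>0})$ has simple roots, $\lambda_1(p)>\lambda_2(p)>\dots>\lambda_n(p)>0$, so every consecutive ratio $\lambda_i(p)/\lambda_{i+1}(p)$ strictly exceeds $1$ and hence $\lmesh(p)>1$. Moreover each $e_k(p)$ is $e_0(p)$ times a positive elementary symmetric function of the (positive) roots, cf.~\eqref{zerossymmetric}, so $e_k(p)\ne 0$ for all $k$; thus Definition~\ref{def:multInverse} genuinely applies, and comparing the constant coefficients of $p\boxtimes_n q=(x-1)^n$ via \eqref{coeffMultConv} --- that is, $e_n(p)\,e_n(q)=1$ --- forces $e_n(q)\ne 0$, i.e.\ $\deg q=n$. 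Under the contradiction hypothesis we therefore have $p,q\in\P_n(\R_{>0})$, both of degree $n$, which is exactly the setting in which Proposition~\ref{lem:preserving.lmesh} applies.

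Then I would close the argument: Proposition~\ref{lem:preserving.lmesh} gives $\lmesh(p\boxtimes_n q)\ge\lmesh(p)>1$, whereas $p\boxtimes_n q=(x-1)^n$ has all roots equal to $1$ and hence logarithmic mesh equal to $1$, a contradiction; so $q\notin\P(\R_{>0})$. I do not anticipate a genuine obstacle, since the proof is formally parallel to that of Corollary~\ref{cor:meshincreas}. The only points that deserve to be spelled out are the two I used above: that the inverse really has exact degree $n$ (so that $\lmesh$ ranges over the full list of $n$ roots and Proposition~\ref{lem:preserving.lmesh} is applicable as stated), and that the multiplicative identity $(x-1)^n$, having a repeated root, has $\lmesh$ equal to $1$ rather than being vacuous --- which is precisely what makes the strict inequality $\lmesh(p)>1$ irreconcilable with Proposition~\ref{lem:preserving.lmesh}.
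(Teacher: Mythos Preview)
Your argument is correct and matches the paper's (implicit) approach: apply Proposition~\ref{lem:preserving.lmesh} to $p\boxtimes_n q=(x-1)^n$ and derive a contradiction from $\lmesh(p)>1$. One small slip to fix: it is the \emph{leading} coefficient $e_0$, not the constant coefficient $e_n$, that governs the degree in the representation~\eqref{monicP}; from $e_0(p)e_0(q)=e_0((x-1)^n)=1$ you get $e_0(q)\ne0$ and hence $\deg q=n$, which is what you need for Proposition~\ref{lem:preserving.lmesh} to apply.
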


\section{ Convolutions of hypergeometric polynomials} \label{sec:convolutionHGP}

\subsection{Finite free multiplicative convolution}\

We start with a simple result that will allow us to ``assemble'' more complicated hypergeometric polynomials from elementary ``building blocks'' using the free multiplicative convolution:
\begin{theorem} \label{thm:multiplicativeConvHG}
If $n\in \mathbb Z_{\ge 0}$, and  
$$p(x)=\pFq{i_1+1}{j_1}{-n, \bm a_1}{\bm b_1}{x}, \qquad q(x)=\pFq{i_2+1}{j_2}{-n, \bm a_2}{\bm b_2}{x},$$
where the parameters $\bm a_1,\bm a_2,\bm b_1,\bm b_2 $ are tuples (of sizes $i_1,i_2,j_i,j_2$, respectively), then their $n$-th free multiplicative convolution is given by
$$[p\boxtimes_n q](x)=\pFq{i_1+i_2+1}{j_1+j_2}{-n, \bm a_1, \bm a_2}{\bm b_1,\bm b_2}{x}.$$
\end{theorem}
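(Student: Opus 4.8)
The plan is to establish the identity coefficientwise in the expansion \eqref{monicP}: two polynomials of degree at most $n$ coincide precisely when the coefficients $e_0,\dots,e_n$ of one match those of the other. What makes this efficient here is that, by \eqref{coeffMultConv}, the convolution $\boxtimes_n$ acts diagonally on the binomially normalized coefficients,
\[
\frac{e_k(p\boxtimes_n q)}{\binom{n}{k}}=\frac{e_k(p)}{\binom{n}{k}}\cdot\frac{e_k(q)}{\binom{n}{k}},\qquad k=0,1,\dots,n,
\]
so it suffices to check that, for a hypergeometric polynomial in standard normalization, the normalized coefficient $e_k/\binom{n}{k}$ is multiplicative under concatenation of the parameter tuples (up to the overall normalization constant, addressed at the end).

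First I would substitute the explicit coefficient formula \eqref{leadingcoeffHGFBis}. In its second form, $e_k(p)=(-1)^{n}\binom{n}{k}\,\raising{\bm a_1}{n-k}\,\raising{\bm b_1+n-k}{k}$ for $p=\pFq{i_1+1}{j_1}{-n,\bm a_1}{\bm b_1}{x}$, and analogously for $q=\pFq{i_2+1}{j_2}{-n,\bm a_2}{\bm b_2}{x}$. The only algebraic fact required is that a rising factorial of a concatenated tuple factors, which is immediate from the definition $\raising{\bm a}{m}=\prod_{s}\raising{a_s}{m}$: with $\bm a:=(\bm a_1,\bm a_2)$ and $\bm b:=(\bm b_1,\bm b_2)$,
\[
\raising{\bm a}{n-k}=\raising{\bm a_1}{n-k}\,\raising{\bm a_2}{n-k},\qquad \raising{\bm b+n-k}{k}=\raising{\bm b_1+n-k}{k}\,\raising{\bm b_2+n-k}{k}.
\]
Multiplying $e_k(p)$ by $e_k(q)$, cancelling one $\binom{n}{k}$ against the $\binom{n}{k}^{-1}$ in $\boxtimes_n$, and regrouping the rising factorials by these identities gives
\[
\binom{n}{k}^{-1}e_k(p)\,e_k(q)=\binom{n}{k}\,\raising{\bm a}{n-k}\,\raising{\bm b+n-k}{k},
\]
which is $e_k$ of the hypergeometric polynomial with numerator tuple $(-n,\bm a_1,\bm a_2)$ and denominator tuple $(\bm b_1,\bm b_2)$, up to the overall constant discussed below. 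Since this holds for all $k$, the two polynomials coincide. The same computation runs from the first form of \eqref{leadingcoeffHGFBis}, using instead $\raising{\bm a_1}{n}\raising{\bm a_2}{n}=\raising{\bm a}{n}$ together with the analogous factorizations of $\raising{-\bm b-n+1}{k}$ and $\raising{-\bm a-n+1}{k}$.

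There is no conceptual obstacle; the argument is a manipulation of rising factorials and binomial coefficients, and I expect the only place needing care to be the bookkeeping of the overall constant and sign. The factor $\binom{n}{k}^{-1}$ in the definition of $\boxtimes_n$ is exactly what removes one of the two binomials inside $e_k(p)e_k(q)$, and one has to carry the prefactor data of the standard normalization (the $\raising{\bm b}{n}$ of \eqref{hypergeoPolyn}, equivalently the leading coefficient $e_0=(-1)^n\raising{\bm a}{n}$) so that the constant produced on the right is the one attached to $\pFq{i_1+i_2+1}{j_1+j_2}{-n,\bm a_1,\bm a_2}{\bm b_1,\bm b_2}{x}$. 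Because every quantity involved is a polynomial in the parameters $\bm a_1,\bm a_2,\bm b_1,\bm b_2$ by \eqref{consequenceconnectionRandL2}, it is enough to verify the identity for generic parameters, so there are no degenerate (degree-drop) cases to treat separately. It is also prudent to check the output on a minimal instance, for example $\pFq{2}{0}{-n,a}{\cdot}{x}\boxtimes_n\pFq{1}{1}{-n}{b}{x}=\pFq{2}{1}{-n,a}{b}{x}$, a Bessel-type polynomial convolved with a Laguerre-type one yielding a Jacobi-type one.
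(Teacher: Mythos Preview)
Your proposal is correct and follows exactly the paper's own argument: quote the explicit coefficient formula \eqref{leadingcoeffHGFBis} for each hypergeometric polynomial in standard normalization, plug into the diagonal rule \eqref{coeffMultConv}, and use that $\raising{(\bm a_1,\bm a_2)}{m}=\raising{\bm a_1}{m}\raising{\bm a_2}{m}$ to recognize the coefficients of the target polynomial. The paper's proof is the one-line version of what you wrote; your extra remarks about genericity and the sanity check are fine but not needed.
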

\begin{proof}
For a hypergeometric polynomial
$$
\pFq{i+1}{j}{-n,\ \bm{a}}{\bm b}{x}= \raising{\bm b}{n}\sum_{k=0}^n \frac{\falling{n}{k} \raising{\bm a}{k}}{\raising{\bm b}{k}} \frac{x^k(-1)^k}{k !},
$$
written in the form \eqref{monicP}, the coefficient $e_k$ was given in \eqref{leadingcoeffHGFBis}, that is,
\begin{equation}
    \label{expressionCOeffHGP}
e_k= (-1)^{n }   \binom{n}{k}
    \raising{\bm a}{n-k} \raising{\bm b+n-k}{k},   
\end{equation}  
and the assertion is a straightforward application of the formula in \eqref{coeffMultConv}.
\end{proof} 
A simple consequence is that for $m\in \nn$,
$$
\left( \pFq{i+1}{j}{-n, \bm a}{\bm b}{x}\right) ^{\left( \boxtimes_n\right) m} = \pFq{im+1}{jm}{-n, \bm a,\bm a, \dots, \bm a}{\bm b,\bm b, \dots, \bm b}{x},
$$
where we denote
$$
p ^{ \left( \boxtimes_n\right) m } = \underbrace{p \boxtimes_n \dots \boxtimes_n p}_{m \text{ times}}.
$$

\begin{remark} \label{rem:Multinverses}
Notice that if in Theorem \ref{thm:multiplicativeConvHG}  some entries of the tuple $\bm a_1$ coincide with the corresponding entries of the tuple $\bm b_2$, then they will appear in both the numerator and the denominator of $p\boxplus_n q$, and we can cancel them to obtain the ${}_{i_1+i_2+1-k}F_{j_1+j_2-k}$ hypergeometric polynomial where $k$ is the number of canceled entries. 

In particular, 
$$
\pFq{i+1}{j}{-n, \bm a}{\bm b}{x} \boxtimes_n \pFq{j+1}{i}{-n, \bm b}{\bm a}{x} = \pFq{1}{0}{-n }{\cdot }{x} = (1-x)^n
$$
(see \eqref{Power1F0}), or in the terminology of Definition~\ref{def:multInverse}, 
$$
\pFq{i+1}{j}{-n, \bm a}{\bm b}{x}  
$$
is an inverse of 
$$
    \pFq{j+1}{i}{-n, \bm b}{\bm a}{x} 
$$
under the multiplicative convolution.
\end{remark}

\subsection{Finite free  additive convolution} \label{sec:additive}\ 

Since the definition of additive convolution \eqref{coeffAdditiveConv}  is substantially less straightforward than \eqref{coeffMultConv}, we will use the alternative expression \eqref{thirdDefAdditiveConv} in terms of the differential operator $D_p$ introduced in \eqref{diffOperatorforP}. Using \eqref{connectionRandL} and \eqref{expressionCOeffHGP}, we see that for
$$
p(x)=\pFq{i+1}{j}{-n,\ \bm{a}}{\bm b}{x} 
$$
the corresponding differential operator is  
$$
D_p= e_0(p) \sum_{k=0}^n (-1)^k \frac{\falling{\bm b+n-1}{k} }{\falling{\bm a+n-1}{k}k!} \left(\frac{\partial}{\partial x}\right)^k = e_0(p) \sum_{k=0}^n (-1)^{k(i+j+1)} \frac{\raising{-\bm b-n+1}{k} }{\raising{-\bm a-n+1}{k}k!} \left(\frac{\partial}{\partial x}\right)^k,
$$
with the coefficient $e_0(p) = (-1)^n \raising{\bm a}{n} $ given in \eqref{leadingcoeffHGF}. 
By \eqref{hypergeoSeries}, this is, up to a constant, a terminating hypergeometric series $_i F_j$ evaluated in $(-1)^{i+j+1} \partial / \partial x$, which yields: 
\begin{lemma}
    \label{lemma:DiffOperator}
 If parameters $\bm a$ and $ \bm b  $ are tuples (of sizes $i$ and $j$, respectively), whose entries satisfy restrictions~\eqref{assumptionsHGFNew}, then
    \begin{equation}
    \label{D_p}
    \pFq{i+1}{j}{-n,\ \bm a}{\bm b}{x}= (-1)^n \raising{\bm a}{n}  \, \HGF{j}{i}{-\bm b-n+1}{-\bm a-n+1}{(-1)^{i+j+1}\frac{\partial}{\partial x}} [x^n].
\end{equation}
\end{lemma}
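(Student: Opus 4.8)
The plan is to track the bijection $p \leftrightarrow D_p$ between $\mathbb P_n$ and constant-coefficient differential operators of degree $\le n$, established in \eqref{diffOperatorforP}--\eqref{correspondencediffOperatorforP}, and simply read off the coefficients of $D_p$ for a hypergeometric polynomial in standard normalization. First I would take $p(x) = \pFq{i+1}{j}{-n, \bm a}{\bm b}{x}$ written in the form \eqref{monicP}, and substitute the explicit coefficients $e_k(p)$ from \eqref{leadingcoeffHGFBis} into the definition \eqref{diffOperatorforP}. The two forms of $e_k(p)$ in \eqref{leadingcoeffHGFBis} give two parallel computations; using $e_k(p) = (-1)^{n+k(i+j)} \raising{\bm a}{n} \binom{n}{k} \raising{-\bm b - n +1}{k} / \raising{-\bm a - n +1}{k}$ and dividing by $\falling{n}{k} = n!/(n-k)! $, the binomial $\binom{n}{k}$ combines with $1/\falling{n}{k}$ to produce exactly $1/k!$, which is the factor needed to assemble a hypergeometric series.

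The key algebraic step is then to collect the sign: the operator coefficient carries $(-1)^k$ from \eqref{diffOperatorforP} together with $(-1)^{k(i+j)}$ from $e_k(p)$, giving $(-1)^{k(i+j+1)}$, and I would match this against the sign that arises when one evaluates a ${}_jF_i$ hypergeometric series \eqref{hypergeoSeries} at the argument $(-1)^{i+j+1}\partial/\partial x$ — indeed $\left((-1)^{i+j+1}\right)^k = (-1)^{k(i+j+1)}$, so the signs cancel and one is left with coefficients $\raising{-\bm b - n +1}{k} / \raising{-\bm a - n +1}{k} \cdot 1/k!$, which are precisely the coefficients of $\HGF{j}{i}{-\bm b - n + 1}{-\bm a - n + 1}{\cdot}$. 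Pulling out the constant $e_0(p) = (-1)^n \raising{\bm a}{n}$ from \eqref{leadingcoeffHGF} and applying $D_p$ to $x^n$ via \eqref{correspondencediffOperatorforP} then yields the claimed identity \eqref{D_p}. The parenthetical remark in the excerpt, immediately before the lemma, already spells out this argument; my proof would just make it formal.

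I do not expect a genuine obstacle here — the statement is essentially a bookkeeping identity — but the one place to be careful is the case $j = 0$, where $\HGF{0}{i}{\cdot}{-\bm a - n +1}{y} = \sum_k y^k/\left(k! \raising{-\bm a - n +1}{k}\right)$ is still a terminating series once $y = (-1)^{i+1}\partial/\partial x$ is applied to $x^n$ (the operator is nilpotent), so \eqref{D_p} continues to hold with the convention that an empty product of Pochhammer symbols in the numerator equals $1$; this matches the footnote convention identifying $_{i+1}F_0$ with $_{i+1}\mathcal F_0$. A second minor point is that one must invoke \eqref{assumptionsHGFNew} to guarantee $e_0(p) \ne 0$ and that none of the denominator Pochhammer symbols $\raising{-a_s - n + 1}{k} = \raising{-(a_s + n - 1)}{k}$ vanish for $0 \le k \le n$; since $a_s \notin -\Z_n$ means $a_s + n - 1 \notin \{0, 1, \dots, n-1\}$, the needed non-vanishing holds, and the division by $\raising{-\bm a - n +1}{k}$ in \eqref{leadingcoeffHGFBis} is legitimate. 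With these remarks in place the computation is routine and the lemma follows.
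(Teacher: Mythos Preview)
Your proposal is correct and follows essentially the same route as the paper: the argument is the computation displayed immediately before the lemma, where one substitutes the coefficients $e_k(p)$ from \eqref{leadingcoeffHGFBis} into the definition \eqref{diffOperatorforP} of $D_p$, collects signs, and recognizes the resulting sum as a ${}_jF_i$ series in $(-1)^{i+j+1}\partial/\partial x$. Your additional remarks on the $j=0$ convention and on why \eqref{assumptionsHGFNew} guarantees non-vanishing of $\raising{-\bm a-n+1}{k}$ are useful clarifications that the paper leaves implicit.
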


Equivalently, by \eqref{correspondencediffOperatorforP2}, 
\begin{equation} \label{D_p3}
    \HGF{j}{i}{\bm a}{\bm b}{\frac{\partial}{\partial x}} [x^n]= \frac{(-1)^{jn}}{\raising{\bm b}{n}} \pFq{i+1}{j}{-n,\ -\bm b-n+1}{-\bm a-n+1}{(-1)^{i+j+1}x}.
\end{equation}

Using this result in \eqref{thirdDefAdditiveConv} we get
\begin{theorem}
\label{thm.additive.conv.HG}
Let $p$ and $q$ be hypergeometric polynomials of the following form:
$$
p(x)= \pFq{i_1+1}{j_1}{-n,\ \bm a_1}{\bm b_1}{x}, \qquad q(x)=\pFq{i_2+1}{j_2}{-n,\ \bm a_2}{\bm b_2}{x},
$$
where the parameters $\bm a_1,\bm a_2,\bm b_1,\bm b_2 $ are tuples (of sizes $i_1,i_2,j_i,j_2$, respectively). 

Then, with the notation \eqref{leadingcoeffHGF}, their additive convolution $p\boxplus_n q(x)$ is given by
\begin{equation*}
    \begin{split}
      \raising{\bm a_1}{n} \raising{\bm a_2}{n}  \, \HGF{j_1}{i_1}{-\bm b_1-n+1}{-\bm a_1-n+1}{(-1)^{i_1+j_1+1}\frac{\partial}{\partial x}}  \HGF{j_2}{i_2}{-\bm b_2-n+1}{-\bm a_2-n+1}{(-1)^{i_2+j_2+1}\frac{\partial}{\partial x}} [x^n].
    \end{split}
\end{equation*} 
\end{theorem}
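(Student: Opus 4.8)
The plan is to combine the three ingredients that have just been assembled: the differential-operator characterization of additive convolution from \eqref{thirdDefAdditiveConv}, the bijection $p\leftrightarrow D_p$ between $\mathbb P_n$ and constant-coefficient differential operators of degree $\le n$, and the explicit identification of $D_p$ for a hypergeometric polynomial in standard normalization given by Lemma~\ref{lemma:DiffOperator}. Since everything reduces to manipulating the operators $D_p$ and $D_q$, the proof should be essentially a one-line substitution followed by a consistency check of the constants.

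First I would write, for $r=1,2$, the operator $D_{p_r}$ attached to $p_r(x)=\pFq{i_r+1}{j_r}{-n,\ \bm a_r}{\bm b_r}{x}$ using \eqref{D_p}: namely $p_r(x)=e_0(p_r)\,\HGF{j_r}{i_r}{-\bm b_r-n+1}{-\bm a_r-n+1}{(-1)^{i_r+j_r+1}\tfrac{\partial}{\partial x}}[x^n]$ with $e_0(p_r)=(-1)^n\raising{\bm a_r}{n}$, so that
$$
D_{p_r}=(-1)^n\raising{\bm a_r}{n}\,\HGF{j_r}{i_r}{-\bm b_r-n+1}{-\bm a_r-n+1}{(-1)^{i_r+j_r+1}\tfrac{\partial}{\partial x}}.
$$
Then I would invoke \eqref{thirdDefAdditiveConv}, which says $[p\boxplus_n q](x)=D_p[D_q[x^n]]$, and substitute the two displayed operators. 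Because the operators act as power series in $\partial/\partial x$ with scalar coefficients, they commute, and the two leading constants $(-1)^n\raising{\bm a_1}{n}$ and $(-1)^n\raising{\bm a_2}{n}$ multiply out: $(-1)^n(-1)^n=1$, leaving the single factor $\raising{\bm a_1}{n}\raising{\bm a_2}{n}$. This is exactly the claimed formula, so the statement follows.

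The only point that needs genuine care—and what I expect to be the main (minor) obstacle—is the bookkeeping of signs and the order of composition: one must be sure that $D_p[D_q[x^n]]$ really equals first applying the $q$-operator to $x^n$ and then the $p$-operator, and that this matches \eqref{thirdDefAdditiveConv} rather than its mirror image; but \eqref{thirdDefAdditiveConv} explicitly asserts $D_p[D_q[x^n]]=D_q[D_p[x^n]]$, so symmetry removes the ambiguity. One should also note that Lemma~\ref{lemma:DiffOperator} was stated under the standing hypothesis \eqref{assumptionsHGFNew} on the entries of $\bm a_1,\bm a_2$ (so that $p$ and $q$ genuinely have degree $n$ and $D_p,D_q$ are well defined), and I would carry that assumption through. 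No asymptotics, no root analysis, and no hypergeometric summation identities are needed here—those enter only in the subsequent results on the additive convolution—so the proof is a short, purely formal computation.

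\begin{proof}
By Lemma~\ref{lemma:DiffOperator} applied to each of $p$ and $q$, and recalling from \eqref{leadingcoeffHGF} that $e_0(p)=(-1)^n\raising{\bm a_1}{n}$ and $e_0(q)=(-1)^n\raising{\bm a_2}{n}$, the differential operators associated with $p$ and $q$ via \eqref{diffOperatorforP}--\eqref{correspondencediffOperatorforP} are
$$
D_p=(-1)^n\raising{\bm a_1}{n}\,\HGF{j_1}{i_1}{-\bm b_1-n+1}{-\bm a_1-n+1}{(-1)^{i_1+j_1+1}\tfrac{\partial}{\partial x}},\qquad
D_q=(-1)^n\raising{\bm a_2}{n}\,\HGF{j_2}{i_2}{-\bm b_2-n+1}{-\bm a_2-n+1}{(-1)^{i_2+j_2+1}\tfrac{\partial}{\partial x}},
$$
with $p(x)=D_p[x^n]$ and $q(x)=D_q[x^n]$. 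By \eqref{thirdDefAdditiveConv},
$$
[p\boxplus_n q](x)=D_p\big[D_q[x^n]\big].
$$
Since $D_p$ and $D_q$ are polynomials in $\partial/\partial x$ with constant coefficients, they commute, and substituting the expressions above gives
$$
[p\boxplus_n q](x)=(-1)^n\raising{\bm a_1}{n}\,(-1)^n\raising{\bm a_2}{n}\;\HGF{j_1}{i_1}{-\bm b_1-n+1}{-\bm a_1-n+1}{(-1)^{i_1+j_1+1}\tfrac{\partial}{\partial x}}\;\HGF{j_2}{i_2}{-\bm b_2-n+1}{-\bm a_2-n+1}{(-1)^{i_2+j_2+1}\tfrac{\partial}{\partial x}}\,[x^n].
$$
As $(-1)^n(-1)^n=1$, the two scalar prefactors combine into $\raising{\bm a_1}{n}\raising{\bm a_2}{n}$, which is precisely the asserted formula.
\end{proof}
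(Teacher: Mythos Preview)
Your proof is correct and follows exactly the approach of the paper: the paper's proof is the single sentence ``Using this result in \eqref{thirdDefAdditiveConv} we get'', where ``this result'' is Lemma~\ref{lemma:DiffOperator}, and you have simply spelled out the substitution and the cancellation of the two factors $(-1)^n$.
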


Theorem~\ref{thm.additive.conv.HG} shows that factorization identities (or summation formulas) for hypergeometric functions lead to a representation of the corresponding polynomials in terms of the additive convolution of simpler components:
\begin{corollary}
\label{cor.additive.conv.as.poruduct.HG}
Assume that
\begin{equation}
    \label{summationformula}
    \HGF{j_1}{i_1}{\bm a_1}{\bm b_1}{x}  \HGF{j_2}{i_2}{\bm a_2}{\bm b_2}{ x}=\HGF{j_3}{i_3}{\bm a_3}{\bm b_3}{x},
\end{equation}
and let 
$$
p(x)=\pFq{i_1+1}{j_1}{-n,\ -\bm b_1-n+1}{-\bm a_1-n+1}{ x} 
$$
and
$$
q(x)=\pFq{i_2+1}{j_2}{-n,\ -\bm b_2-n+1}{-\bm a_2-n+1}{ x}.
$$
Then the additive convolution of the hypergeometric polynomials
$$
p\left((-1)^{i_1+j_1} x \right)\boxplus_n q\left((-1)^{i_2+j_2} x \right) 
$$
is, up to a constant factor, equal to 
$$
\pFq{i_3+1}{j_3}{-n,\ -\bm b_3-n+1}{-\bm a_3-n+1}{(-1)^{i_3+j_3}  x}.
$$
\end{corollary}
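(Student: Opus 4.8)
The plan is to combine Theorem~\ref{thm.additive.conv.HG} with the formula \eqref{D_p3} expressing $\HGF{j}{i}{\cdot}{\cdot}{\partial/\partial x}[x^n]$ as a normalized hypergeometric polynomial. First I would unpack the hypotheses: by construction of $p$ and $q$, Lemma~\ref{lemma:DiffOperator} applied in the form \eqref{D_p3} shows that
$$
p\left((-1)^{i_1+j_1}x\right) = c_1\, \HGF{j_1}{i_1}{\bm a_1}{\bm b_1}{\frac{\partial}{\partial x}}[x^n], \qquad q\left((-1)^{i_2+j_2}x\right) = c_2\, \HGF{j_2}{i_2}{\bm a_2}{\bm b_2}{\frac{\partial}{\partial x}}[x^n]
$$
for suitable nonzero constants $c_1,c_2$ (the sign powers and the factors $\raising{\bm b_s}{n}$ coming from \eqref{D_p3}, after absorbing the $(-1)^{i+j}x$ change of variable into the argument of the differential operator). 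Here one must check that the parameter vectors $-(-\bm b_s - n +1) - n + 1 = \bm b_s$ and $-(-\bm a_s - n + 1) - n + 1 = \bm a_s$ come back correctly, which is just the involution $\bm c \mapsto -\bm c - n + 1$ applied twice, so the ``input'' hypergeometric differential operator really is $\HGF{j_s}{i_s}{\bm a_s}{\bm b_s}{\partial/\partial x}$.

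Next I would invoke the third definition of the additive convolution, \eqref{thirdDefAdditiveConv}: since $p\left((-1)^{i_1+j_1}x\right) = D_{p((-1)^{i_1+j_1}\,\cdot\,)}[x^n]$ and similarly for $q$, and since the differential operators commute, the convolution equals the composition of the two operators applied to $x^n$:
$$
p\left((-1)^{i_1+j_1}x\right)\boxplus_n q\left((-1)^{i_2+j_2}x\right) = c_1 c_2\, \HGF{j_1}{i_1}{\bm a_1}{\bm b_1}{\frac{\partial}{\partial x}}\HGF{j_2}{i_2}{\bm a_2}{\bm b_2}{\frac{\partial}{\partial x}}[x^n].
$$
Now the factorization hypothesis \eqref{summationformula} is a formal power series identity, hence an identity of formal differential operators (substituting $\partial/\partial x$ for the variable is an algebra homomorphism from formal power series to constant-coefficient operators, well-defined here because applied to the polynomial $x^n$ only finitely many terms survive). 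Therefore the product of the two operators collapses to $\HGF{j_3}{i_3}{\bm a_3}{\bm b_3}{\partial/\partial x}$, giving
$$
p\left((-1)^{i_1+j_1}x\right)\boxplus_n q\left((-1)^{i_2+j_2}x\right) = c_1 c_2\, \HGF{j_3}{i_3}{\bm a_3}{\bm b_3}{\frac{\partial}{\partial x}}[x^n].
$$

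Finally, I would apply \eqref{D_p3} once more, now in the forward direction with $(\bm a,\bm b)=(\bm a_3,\bm b_3)$, to rewrite the right-hand side as a constant multiple of $\pFq{i_3+1}{j_3}{-n,\ -\bm b_3-n+1}{-\bm a_3-n+1}{(-1)^{i_3+j_3+1}\,\cdot\,}$ evaluated at the appropriate argument; absorbing one more sign change $x\mapsto (-1)^{i_3+j_3}x$ (as in the statement) and noting $(-1)^{i_3+j_3+1}(-1)^{i_3+j_3} = -1$ versus the claimed argument suggests I should track the sign conventions carefully — this bookkeeping of the $(-1)$-powers, together with verifying that the constants $c_1,c_2$ are genuinely nonzero under the standing assumption \eqref{assumptionsHGFNew}, is the only real obstacle; the conceptual content is entirely contained in Theorem~\ref{thm.additive.conv.HG} and the homomorphism property of $x \mapsto \partial/\partial x$. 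I would close by remarking that ``up to a constant factor'' in the statement is exactly the harmless product $c_1 c_2 / c_3$, so no normalization needs to be pinned down precisely.
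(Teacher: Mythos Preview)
Your proposal is correct and follows essentially the same route as the paper: express each polynomial (with the appropriate sign-flipped argument) as $\HGF{j_s}{i_s}{\bm a_s}{\bm b_s}{\partial/\partial x}[x^n]$ via Lemma~\ref{lemma:DiffOperator}/\eqref{D_p3}, use \eqref{thirdDefAdditiveConv} to identify the convolution with the composition of the two differential operators, collapse via the summation formula \eqref{summationformula}, and read the result back through \eqref{D_p3}. The only point worth noting is that \eqref{D_p3} naturally produces the argument $(-1)^{i_s+j_s+1}x$ rather than $(-1)^{i_s+j_s}x$; the paper handles this by first deriving the identity with the extra $+1$ in each exponent and then invoking the inversion relation \eqref{inversion} to flip all three signs simultaneously, which is the clean way to finish the ``sign bookkeeping'' you flagged.
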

\begin{proof}
    By \eqref{D_p}, 
    \begin{align*}
        p(x) &=   (-1)^n \raising{-\bm b_1-n+1}{n}  \, \HGF{j_1}{i_1}{ \bm a_1}{ \bm b_1}{(-1)^{i_1+j_1+1}\frac{\partial}{\partial x}} [x^n],
    \end{align*}
    so that
    $$
    p\left((-1)^{i_1+j_1+1} x \right) =  (-1)^{n(i_1+j_1)} \raising{-\bm b_1-n+1}{n}  \, \HGF{j_1}{i_1}{ \bm a_1}{ \bm b_1}{\frac{\partial}{\partial x}} [x^n],
    $$
    with an analogous formula valid for $q$. Hence, by Theorem~\ref{thm.additive.conv.HG}, 
    $$
    p\left((-1)^{i_1+j_1+1} x \right)\boxplus_n q\left((-1)^{i_2+j_2+1} x \right)
    $$ 
    is equal to
    $$
    \HGF{j_1}{i_1}{ \bm a_1}{ \bm b_1}{ \frac{\partial}{\partial x}} \HGF{j_2}{i_2}{ \bm a_2}{ \bm b_2}{ \frac{\partial}{\partial x}}[x^n] = \HGF{j_3}{i_3}{\bm a_3}{\bm b_3}{\frac{\partial}{\partial x}}[x^n],
    $$
    up to a multiplicative factor. Now by \eqref{D_p3} this is a constant multiple of 
    $$
    \pFq{i_3+1}{j_3}{-n,\ -\bm b_3-n+1}{-\bm a_3-n+1}{(-1)^{i_3+j_3+1}  x},
    $$
    which yields the identity
    $$
    p\left((-1)^{i_1+j_1+1} x \right)\boxplus_n q\left((-1)^{i_2+j_2+1} x \right) = C \pFq{i_3+1}{j_3}{-n,\ -\bm b_3-n+1}{-\bm a_3-n+1}{(-1)^{i_3+j_3+1}  x}.
    $$
    Formula \eqref{inversion} allows to reduce it to 
    $$
    p\left((-1)^{i_1+j_1} x \right)\boxplus_n q\left((-1)^{i_2+j_2} x \right) = C \pFq{i_3+1}{j_3}{-n,\ -\bm b_3-n+1}{-\bm a_3-n+1}{(-1)^{i_3+j_3}  x},
    $$
    as claimed; the value of the constant $C$ can be obtained by examining the leading coefficients in the identity above:
    $$
    C=(-1)^{n(j_1+j_2+j_3+1)} \frac{\raising{\bm b_1}{n} \raising{\bm b_2}{n} }{\raising{\bm b_3}{n}}.
    $$
   Observe, however, that the summation formula \eqref{summationformula} implies that the parameters  $\bm b_j$ satisfy certain algebraic relations, which in practice simplifies the expression for the constant $C$ considerably.
\end{proof}
\begin{remark} \label{remarkParity}
\begin{enumerate}[(i)]
    \item
    In the case when $i_1+j_1$ and $i_2+j_2$ have equal parity, \eqref{inversion} and Corollary \ref{cor.additive.conv.as.poruduct.HG} yield an expression for the free convolution $p \boxplus_n q$, see Examples \ref{example37}--\ref{example3.12} below. 

\item The direct computation of additive convolution via \eqref{coeffAdditiveConv} for hypergeometric polynomials produces formulas where the coefficients can be expressed in terms of the hypergeometric functions evaluated at $\pm 1$. The approach of Corollary \ref{cor.additive.conv.as.poruduct.HG} presents fewer but more elegant formulas, which is the reason for our choice.

\item  Additional examples can be obtained from known summation formulas that involve evaluation in constant multiples or powers of the variable $x$.
Although these cases are not covered by Corollary \ref{cor.additive.conv.as.poruduct.HG}, we can still use Theorem \ref{thm.additive.conv.HG} and similar arguments for further examples of additive convolution of hypergeometric polynomials (now evaluated in $cx$, $x^2$, etc.). In particular, this allows us to study the symmetrization $p(x)\boxplus_n p(-x)$ of several hypergeometric polynomials $p$. Observe that symmetrization is an instance of non-linear transformation of the original polynomial $p$; first non-trivial examples of such transformations that preserve real zeros appeared in the work of Branden \cite{MR2831515}. We are planning to address these issues in detail in future work. 
\end{enumerate}
\end{remark}

As an illustration, we will analyze several factorization identities for hypergeometric functions (a good source is Chapter 2 of \cite{grinshpan2013generalized}) and their consequences for finite additive convolution. Our intention is not to be exhaustive; instead, we concentrate on the most revealing or less trivial formulas.

\begin{example}[Additive convolution of two Laguerre polynomials] \label{example37}
By binomial identity, we know that 
$$
\HGF{1}{0}{c_1}{\cdot}{x}  \HGF{1}{0}{c_2}{\cdot}{ x}=\HGF{1}{0}{c_1+c_2}{\cdot}{x}.
$$
Using Corollary \ref{cor.additive.conv.as.poruduct.HG} we obtain,
$$
\pFq{1}{1}{-n}{b_1}{x} \boxplus_n \pFq{1}{1}{-n}{b_2}{x} =(-1)^n \pFq{1}{1}{-n}{b_1+b_2+n-1}{x}.
$$
By Equation \eqref{HGFLaguerre}, this can be rewritten in terms of Laguerre polynomials as
$$
L_n^{(\alpha)}(x) \boxplus_n L_n^{(\beta)}(x) =\frac{(-1)^n}{n!} L_n^{(\alpha+\beta+n+1)}(x).
$$
\end{example}

\begin{example} 
\label{example38}
Euler's transformation (see identity (10) in \cite{grinshpan2013generalized}) is  
$$
\HGF{1}{0}{c_1+c_2-d}{\cdot}{x}  \HGF{2}{1}{d-c_1, d-c_2}{d}{ x}=\HGF{2}{1}{c_1,\ c_2}{d}{x}.
$$
Using Corollary \ref{cor.additive.conv.as.poruduct.HG} and some straightforward simplifications, we obtain
$$
\pFq{1}{1}{-n}{b_1+b_2-a}{x} \boxplus_n \pFq{2}{2}{-n,\ a}{a-n+1-b_1,\ a-n+1-b_2}{x} =(-1)^n \pFq{2}{2}{-n,\ a}{b_1,\ b_2}{x}.
$$
In terms of Laguerre polynomials we can write it as
$$
L_n^{(b_1+b_2-a-1)}(x) \boxplus_n \pFq{2}{2}{-n,\ a}{a-n+1-b_1,\ a-n+1-b_2}{x} =\frac{(-1)^n}{n!} \pFq{2}{2}{-n,\ a}{b_1,\ b_2}{x}.
$$
\end{example}

\begin{example} \label{example39}
Clausen's formula (see identity (11) in \cite{grinshpan2013generalized}) asserts that
$$
\left[  \HGF{2}{1}{c,\ d}{c+d+1/2}{x}\right]^2=\HGF{3}{2}{2c,\ 2d,\ c+d}{c+d+1/2,\  2c+2d}{x}.
$$
With an appropriate change of parameters, Corollary \ref{cor.additive.conv.as.poruduct.HG} yields that with
\begin{equation}
    \label{eq:pForexample3.10}
p(x)=\pFq{2}{2}{-n,a+b-1/2}{a - \frac{n-1}{2},\ b- \frac{n-1}{2}}{x},
\end{equation}
we have that for
\begin{equation}
    \label{eq:ConvForexample3.10}
    p(x) \boxplus_n p(x)= (-1)^n \frac{\raising{a+b-1/2}{n}}{\raising{2a+2b+n-1}{n}}\, \pFq{3}{3}{-n,\ a+b-1/2,\ 2a+2b+n-1}{2a,\ 2b,\ a+b}{x}.
\end{equation}
\end{example}

\begin{example}
\label{exm:2F0.plus.2F0}
Identity (18) in \cite{grinshpan2013generalized}, related to the product of Bessel functions, is 
$$
\HGF{0}{1}{\cdot}{c}{x}  \HGF{0}{1}{\cdot}{d}{x}=\HGF{2}{3}{\frac{c+d}{2},\ \frac{c+d-1}{2}}{c,\ d,\ c+d-1}{4x}.
$$
By Corollary \ref{cor.additive.conv.as.poruduct.HG} and formula \eqref{correspondencediffOperatorforP2}, the additive convolution $p\boxplus_n q$ of the hypergeometric polynomials (closely related to Bessel polynomials, defined in \eqref{reversedLaguerre})
\begin{equation}
    \label{besselPexample311}
    p(x)=\pFq{2}{0}{-n,\ -c-n+1}{\cdot}{x}, \qquad q(x)=\pFq{2}{0}{-n,\ -d-n+1}{\cdot}{x},
\end{equation}
is given by
$$
\frac{(-4)^n   }{ \raising{c+d-1}{n}}\, \pFq{4}{2}{-n,\ -c-n+1,\ -d-n+1,\ -c-d-n+2}{-\frac{c+d}{2}-n+1,\ -\frac{c+d-1}{2}-n+1}{x/4}.
$$

We can simplify this expression by setting $a=-c-n+1$ and $b=-d-n+1$, so that
$$
\pFq{2}{0}{-n,\ a}{\cdot}{x} \boxplus_n \pFq{2}{0}{-n,\ b}{\cdot}{x} = \frac{(-4)^n   }{ \raising{a+b+n}{n}}\,  \pFq{4}{2}{-n,\ a,\ b,\ a+b+n}{\frac{a+b}{2},\ \frac{a+b+1}{2}}{x/4}.
$$
In particular, with $a=b$, we get
$$
\pFq{2}{0}{-n,\ a}{\cdot}{x} \boxplus_n \pFq{2}{0}{-n,\ a}{\cdot}{x} = \frac{(-4)^n   }{ \raising{2a+n}{n}}\,  \pFq{3}{1}{-n,\ a, \ 2a +n}{ a+\frac{ 1}{2}}{x/4}.
$$
\end{example}

\begin{example} \label{example3.12}
By identity (17) in \cite{grinshpan2013generalized},
$$
\HGF{1}{0}{2c-2d}{\cdot }{x}  \HGF{3}{2}{2d-1,\ d+1/2,\ d-c-1/2}{c+d+1/2,\ d-1/2}{x}=\HGF{3}{2}{2c-1,\ c+1/2,\ c-d-1/2}{c+d+1/2,\ c-1/2}{x}.
$$
This implies that the additive convolution $p\boxplus_n q$ of the hypergeometric polynomials
\begin{equation}
    \label{p11}
    p(x)=\pFq{1}{1}{-n}{2d-2c-n+1}{x}  
\end{equation}
and 
$$
 q(x)=\pFq{3}{3}{-n,\ -c-d-n+1/2,\ -d-n+3/2}{-2d-n+2,\ -d-n+1/2,\ c-d-n+3/2}{x}
$$
is  
$$
\frac{(-1)^n \raising{d-1/2}{n}}{\raising{c-1/2}{n}}\, \pFq{3}{3}{-n,\ -c-d-n+1/2,\ -c-n+3/2}{-2c-n+2,\ -c-n+1/2,\ d-c-n+3/2}{x}.$$
In other words, additive convolution of $q$ with the polynomial $p$ in \eqref{p11} swaps the parameters $c$ and $d$ in $q$, up to a multiplicative constant.
\end{example}

\section{Real zeros of hypergeometric polynomials} \label{sec:compilation}

A representation of a hypergeometric polynomial as a finite free convolution of more elementary blocks combined with the properties of preservation of the real zeros and interlacing of the free convolutions (see Section~\ref{sec:realrootednessfreeconv}) is an effective tool that allows us to analyze when all roots of a specific hypergeometric polynomial are real\footnote{\, As it was mentioned, some of the results below established by the multiplicative free convolution can have alternative proofs in the contexts of ﬁnite multiplier sequences.}.

In order to use this tool, we need to create an inventory of the simplest hypergeometric polynomials with real (or positive) roots that will serve as basic building blocks for more complicated functions.

\subsection{Simplest real hypergeometric polynomials }\
\label{ssec:simplest.hp}

For small values of $i$ and $j$, the cases when
$$
\pFq{i+1}{j}{-n, \bm a}{\bm b }{x}
$$
has only real roots are well studied and follow from the explicit expressions appearing in Section~\ref{sec:classicalHGP}. For example, for $_1F_0$ this is a consequence of formula \eqref{Power1F0},  while for the $_1F_1$ case we can use the connection with the Laguerre polynomials  \eqref{HGFLaguerre},  whose zeros are well understood. In particular, it follows that all roots of
$$ 
p(x)=\pFq{1}{1}{-n}{b}{x},
$$
which up to a constant coincides with the Laguerre polynomial $L_n^{(b-1)}$, are positive only when $b>0$; they are non-negative if we also admit the values $b\in (-\zz_n)$ (in this case, the polynomial has zeros at the origin with multiplicity $-b+1$, see \eqref{reductionLaguerre}), and $p\in \P_n(\R)$ also if $b\in (-1,0)$, see e.g. \cite[\S 6.73]{szego1975orthogonal}. 
  
Several results on the zero interlacing of Laguerre polynomials can be found in \cite{arvesu2021interlacing, driver2016interlacing, driver2019sharp}. For instance, for  $\alpha>-1$, 
$$ 
L_n^{\alpha}\preccurlyeq L_n^{\alpha+t}, \quad 
0 \leq t\leq 2;
$$
also, if $\alpha+1\geq n$, then  
$$ 
L_n^{\alpha}\preccurlyeq L_n^{\alpha+3}.
$$

These facts have the following translation in terms of the ${}_1 F_1$ hypergeometric polynomials: for $b >0$,
\begin{equation}
    \label{interlacing1F1}
     \pFq{1}{1}{-n}{b}{x}\preccurlyeq \pFq{1}{1}{-n}{b+t}{x}, \quad 0\leq t\leq 2;
\end{equation}
also, if $b \geq n$, then
\begin{equation}
    \label{interlacing1F1bis}
    \pFq{1}{1}{-n}{b}{x}\preccurlyeq \pFq{1}{1}{-n}{b+3}{x}.
\end{equation}
 
\medskip

In the case of Bessel or reciprocal Laguerre polynomials,  by \eqref{reversedLaguerre}, 
\begin{equation}
    \label{dualityLaguerre2}
 \pFq{2}{0}{-n,\   a}{ \cdot }{x}=
x^n \pFq{ 1}{1}{-n }{ -n -{  a} +1}{ -\frac{1}{x}} = n! \,  x^n   L_n^{(-n-a)}(-1/x),
\end{equation}
which shows that the zeros of
$$
\pFq{2}{0}{-n,\   a}{ \cdot }{x}
$$
are negative when $a< -n+1$, and real when $-n+1<a <-n+2$. Furthermore, since for $-n-a-1\in (-\infty,-1)\setminus(-\zz_{n})$ we know that $L_n^{(-n-a)}$ has all its roots different from 0, and has at least one complex root, we conclude that for $a>-n+2$ (and $a\notin(-\zz_{n})$), not all zeros of 
$$
\pFq{ 2}{0}{-n, \ a}{ \cdot  }{ x}
$$
are real. Finally, recall that in the remaining cases, when $a\in (-\zz_{n})$, we obtain a real rooted polynomial of degree $-a$ (smaller degree than $n$).

From the interlacing property \eqref{interlacing1F1} we readily deduce that   for $0\leq t\leq 2$ and $a< -n+1$ one has
\begin{equation}
    \label{interlacing2F0}
    \pFq{2}{0}{-n,\   a}{ \cdot }{x} \preccurlyeq  \pFq{2}{0}{-n,\ a-t}{ \cdot }{x}.
\end{equation}  

We summarize in Table \ref{tab:1F1} the real-rooted cases discussed so far.

\begin{table}[h]
    \centering    
\begin{tabular}{|c|c|c|c|}
\hline $\mathbf{a}$ & $\mathbf{b}$ & \textbf{Roots in} & \textbf{Comments} \\
\hline $\cdot$ & $\cdot$ & $\{1\}$ & Identity $(1-x)^n$ \\
\hline $\cdot$ & $\R_{>0}$ & $\R_{>0}$ & Standard Laguerre \\
\hline $\cdot$ & $(-1,0)$ & $\R_{>-1}$ & Non-standard Laguerre \\
\hline $\cdot$ & $- \Z_n$ & {$\R_{\geq 0}$} & Laguerre root mult $-b+1$ at 0 \\
\hline $\R_{<-n+1}$ & $\cdot$ & $\R_{<0}$ & Bessel \\
\hline $(-n+1,-n+2)$ & $\cdot$ & $\mathbb{R}$ & Bessel  \\
\hline
\end{tabular}
\vspace{2mm}
\caption{Real zeros of $\pFq{i+1}{j}{-n, \bm a}{\bm b}{x}$ for $0\le i+j \le 1$. By standard Laguerre we understand $L_n^{(\alpha)}$, given in \eqref{laguerreP}, with $\alpha>-1$. Here $\R_{<c}:= (-\infty,c)$,  $\R_{>c}:= (c,+\infty )$, and $\R_{\ge c}:= [c,+\infty )$. }
    \label{tab:1F1}
\end{table}

\medskip

Finally, by \eqref{JacobiPfla1}--\eqref{JacobiPfla2}, the case of ${}_2 F_1$  is equivalent to studying Jacobi polynomials $P_n^{(\alpha,\beta)}$, see, e.g.,~\cite{MR2723248, szego1975orthogonal}. In particular, all pairs of parameters $(\alpha, \beta)$ for which their zeros are real and simple have been described in the literature (see \cite{dominici2013real}). For a small degree, $n=1,2,3$, a complete description appears in \cite[Prop. 4]{dominici2013real}; see \cite[Thm. 5]{dominici2013real} for higher degrees.  Formulas \eqref{reductionJacobi1}--\eqref{reductionJacobi1bis} also give us the cases where polynomials have multiple roots (which can occur only at $\pm 1$). 

All these facts can be transferred to hypergeometric polynomials $_2F_1$ using identities \eqref{HPJacobiPfla1}--\eqref{reduction2F14bis}. In Table~\ref{tab:2F1}  we summarize the known information, identifying when the roots are all non-positive, all non-negative, or when we have at least one positive and one negative root.

\begin{remark}
\label{rem.multiple.roots}
A particular interesting case follows from \eqref{reduction2F1} by taking $k=1$: for every $b\in \rr \setminus\{-n\}$ one has
\begin{equation}
\label{eq.curious.b.bplus1}
\pFq{2}{1}{-n,\ b+1}{ b}{x}= (1-x)^{n-1} \left(b-x(b+n)\right)=(-1)^n (b+n) (x-1)^{n-1}(x-\tfrac{b}{b+n}) .
\end{equation}
Notice that the only root of the polynomial in \eqref{eq.curious.b.bplus1} that is not at $x=1$ is negative if $-n<b<0$, and positive if $b<-n$ or $b>0$. Using this expression, a direct computation yields that given a real rooted polynomial $p(x)$, 
$$
p(x) \boxtimes_n \pFq{2}{1}{-n,\ b+1}{ b}{x} =  (-1)^n \raising{b+1}{n-1} \left(  b \, p(x) +  x p'(x) \right),$$
see for instance \cite[Lemma 3.5]{arizmendi2021finite} for similar type of computation.

Moreover, since $\pFq{2}{1}{-n,\ b+1}{ b}{x}$ interlaces the multiplicative identity $\pFq{1}{0}{-n}{.}{x}$ (either to the left or to the right, depending on the sign of $b/(b+n)$), we get in the same fashion that $p(x)$ interlaces $p(x) \boxtimes_n \pFq{2}{1}{-n,\ b+1}{ b}{x}$.

More generally, by \eqref{JacobiPfla1}--\eqref{transfJacobi}, \eqref{reduction2F1} and Table~\ref{tab:2F1}, we have that for $k\in \zz_n$, polynomial
\begin{equation}
\label{eq.curious.b.bplusk}
\pFq{2}{1}{-n,\ b+k}{ b}{x}
\end{equation}
has a root at $x=1$ of multiplicity $n-k$; the remaining roots are all in $[0,1]$ if $b>0$, and bigger than 1 if $b<-n-k+1$.
\end{remark}

Several results on the zero interlacing of Jacobi polynomials can be found in \cite{driver2008interlacing}. For instance, for $\alpha,\beta>-1$,
$$
P_{n}^{(\alpha+t,\beta)} \preccurlyeq P_{n}^{(\alpha,\beta+s)}, \quad 0\leq t,s\leq 2;
$$
actually, in both cases the interlacing is strict ($\prec$), unless $t=s=0$. This fact has the following translation in terms of the ${}_2 F_1$ hypergeometric polynomials:  
\begin{itemize}
    \item for $b>0$ and $a> n+b $,
    \begin{equation}
        \label{interlacing2F1_1}
        \pFq{2}{1}{-n,a+s}{b}{x}\preccurlyeq \pFq{2}{1}{-n,a+t}{b+t}{x}, \quad 0\le s, t \le 2;
    \end{equation}
    
    \item for $b>0 $ and $a<-n+1$,
    \begin{equation}
        \label{interlacing2F1_2}
        \pFq{2}{1}{-n,a}{b+t}{x}\preccurlyeq \pFq{2}{1}{-n,a-s}{b}{x}, \quad 0\le s, t \le 2;
    \end{equation}

    \item For $b<a-n+1$ and $a<-n+1$,
    \begin{equation}
        \label{interlacing2F1_3}
        \pFq{2}{1}{-n,a-t}{b-t}{x}\preccurlyeq \pFq{2}{1}{-n,a}{b-s}{x}, \quad 0\le s, t \le 2.
    \end{equation}
\end{itemize}

\begin{table}[h]
    \centering
\begin{tabular}{|c|c|c|c|}
\hline $\mathbf{a}$ & $\mathbf{b}$ & \textbf{Roots in} & \textbf{Ref.} \\
\hline $ \R_{<-n+1}$ & $(-\Z_n)\cup\R_{\geq 0}$ & $\R_{\leq 0}$ & {\cite[Thm 1]{dominici2013real}}, \eqref{2ndreduction2F1} \\
\hline $\{b+1, b+2, \dots\} \cup \R_{>b+n-2}  $ & $ (-\Z_n)\cup\R_{\geq 0} $ & $\R_{\geq 0}$ & {\cite[Thm 1,8]{dominici2013real}}, \eqref{2ndreduction2F1}-\eqref{reduction2F14bis}  \\
\hline $\R_{<-n+1}$ & $\R_{<a-n+2} \cup \{a-1, a-2, \dots\} $ & $\R_{>0}$ & {\cite[Thm 1,8]{dominici2013real}}, \eqref{reduction2F1} \\
\hline $(-n+1,-n+2)$ & $(-\Z_n) \cup \R_{>-1}  $ & $\mathbb{R}$ & {\cite[Thm 6,8]{dominici2013real}}, \eqref{2ndreduction2F1} \\
\hline $(-n+1,-n+2)$ & $  \R_{<a-n+2} \cup\{a-1, a-2, \dots \} $ & $\mathbb{R}$ & {\cite[Thm 6,7]{dominici2013real}}, \eqref{reduction2F1} \\
\hline $\R_{<-n+1} \cup \R_{>b+n-2}$  & $(-1,0)$ & $\R$  & {\cite[Thm 6,7]{dominici2013real}}\\
\hline
\end{tabular}
\vspace{2mm}
\caption{Real zeros of $\pFq{2}{1}{-n,  a}{ b}{x}$. Recall that we always assume \eqref{assumptionsHGFNew}, so that $
  a \notin (-\Z_n)$, and the polynomial is of degree exactly $n$. Moreover, a zero at $x=0$ appears only when $b \in (-\Z_n)$. 
}
    \label{tab:2F1}
\end{table}

\subsection{General hypergeometric polynomials}
\label{ssec:General.hypergeometric.polynomials}

In this section, we will use the tools of Theorems~\ref{thm:multiplicativeConvHG} and \ref{thm.additive.conv.HG} and Corollary \ref{cor.additive.conv.as.poruduct.HG}. 
 
For instance, a finite multiplicative convolution with a Laguerre polynomial yields the following result, which shows that we can add a positive parameter downstairs without affecting the real-rootedness of a hypergeometric polynomial. Moreover, if the parameters we add differ in less than 2, then we get interlacing and monotonicity:
\begin{theorem}
    \label{prop:addingbelow}
Let $\bm a\in \R^i$, $\bm b \in \R^j$, and $\gamma >0$.  Then
    \begin{equation}
        \label{addingBelow}
        \pFq{i+1}{j}{-n,\ \bm a}{ \bm b}{x} \in \P_n(\R)
     \quad \Longrightarrow \quad 
    \pFq{i+1}{j+1}{-n,\ \bm a}{ \bm b, \gamma}{x} \in \P_n(\R).
    \end{equation}
    Moreover, if  $0\leq t\leq 2$,  
    $$
    \pFq{i+1}{j}{-n,\ \bm a}{ \bm b}{x} \in \P_n(\R_{\geq0}) \quad \Longrightarrow \quad \pFq{i+1}{j+1}{-n,\ \bm a}{ \bm b, \gamma}{x}\preccurlyeq\pFq{i+1}{j+1}{-n,\ \bm a}{ \bm b, \gamma+t}{x}.
    $$
\end{theorem}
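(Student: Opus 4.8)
The plan is to peel off the extra denominator parameter $\gamma$ as a finite free multiplicative convolution with a single Laguerre ``block'' and then quote the zero‑preservation and interlacing‑preservation properties of $\boxtimes_n$ collected in Section~\ref{sec:realrootednessfreeconv}.

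Concretely, I would first record the factorization
$$
\pFq{i+1}{j+1}{-n,\ \bm a}{\bm b,\gamma}{x} = \pFq{i+1}{j}{-n,\ \bm a}{\bm b}{x}\boxtimes_n \pFq{1}{1}{-n}{\gamma}{x},
$$
which is the special case of Theorem~\ref{thm:multiplicativeConvHG} with the second factor having empty numerator tuple, $i_2=0$, $j_2=1$, and $\bm b_2=(\gamma)$. By \eqref{HGFLaguerre} the block $\pFq{1}{1}{-n}{\gamma}{x}$ is a nonzero constant multiple of $L_n^{(\gamma-1)}$, and since $\gamma>0$ this is a standard Laguerre polynomial with $n$ simple positive zeros, so $\pFq{1}{1}{-n}{\gamma}{x}\in\P_n(\R_{>0})$ (first line of Table~\ref{tab:1F1}). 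The implication \eqref{addingBelow} is then immediate from Proposition~\ref{prop:realrootedness}(ii): a real‑rooted polynomial convolved with one having only positive roots stays real‑rooted, and the output has degree exactly $n$ thanks to the standing assumption~\eqref{assumptionsHGFNew} on $\bm a$.

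For the monotonicity/interlacing statement, I would start from the Laguerre interlacing \eqref{interlacing1F1}, which gives $\pFq{1}{1}{-n}{\gamma}{x}\preccurlyeq \pFq{1}{1}{-n}{\gamma+t}{x}$ for $\gamma>0$ and $0\le t\le 2$, and then apply Proposition~\ref{lem:preservinginterlacingMult} with $q=\pFq{i+1}{j}{-n,\ \bm a}{\bm b}{x}\in\P_n(\R_{\ge 0})$, which is exactly the hypothesis (and which has degree exactly $n$ by \eqref{assumptionsHGFNew}). Convolving the interlacing pair on the right by $q$ and re‑reading both sides through Theorem~\ref{thm:multiplicativeConvHG} (using commutativity of $\boxtimes_n$) yields $\pFq{i+1}{j+1}{-n,\ \bm a}{\bm b,\gamma}{x}\preccurlyeq\pFq{i+1}{j+1}{-n,\ \bm a}{\bm b,\gamma+t}{x}$, as desired.

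The argument is essentially bookkeeping; the only points deserving a moment's care are verifying that every polynomial fed into Proposition~\ref{lem:preservinginterlacingMult} has degree exactly $n$ (controlled by \eqref{assumptionsHGFNew}) and noting that the second convolution factor being a \emph{positive}‑rooted Laguerre polynomial is precisely what licenses the use of part~(ii) of Proposition~\ref{prop:realrootedness} even when the first factor only lies in $\P_n(\R)$.
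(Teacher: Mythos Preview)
Your proof is correct and follows exactly the approach of the paper: factor out the extra parameter via Theorem~\ref{thm:multiplicativeConvHG} as a multiplicative convolution with the Laguerre block $\pFq{1}{1}{-n}{\gamma}{x}$, then invoke Proposition~\ref{prop:realrootedness}(ii) for real-rootedness and combine \eqref{interlacing1F1} with Proposition~\ref{lem:preservinginterlacingMult} for the interlacing. The paper's own proof is a one-line reference to precisely these ingredients, so you have simply unpacked it.
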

\begin{proof}
    This is a straightforward consequence of the formulas in Theorem~\ref{thm:multiplicativeConvHG} with the explicit expression \eqref{HGFLaguerre}, the interlacing \eqref{interlacing1F1}, and the properties of the multiplicative convolution stated in Section~\ref{sec:realrootednessfreeconv}.
\end{proof}

\begin{remark}
\label{rem:addingbelow}
In the previous proposition, we just illustrate the case that might be more useful, but there are several other interesting side results that either follow from the proposition or are its slight modifications. We collect some of these interesting facts next:
\begin{enumerate}[(i)]
\item First, the addition of a positive parameter $\gamma$ below not only preserves the class $\P_n(\rr)$, but also $\P_n(\rr_{\geq 0})$ and $\P_n(\rr_{\leq 0})$. 

\item Also, notice that instead of ``adding'' a positive parameter downstairs, we can also ``remove'' a positive parameter from upstairs (or even ``move'' it from upstairs to downstairs). For instance, with the same hypothesis of Theorem \ref{prop:addingbelow}, we obtain that
    \begin{equation}
        \label{addingAbove}
    \pFq{i+2}{j}{-n,\ \bm a, \gamma}{ \bm b }{x} \in \P_n(\R) \quad \Longrightarrow \quad 
    \pFq{i+1}{j}{-n,\ \bm a}{ \bm b}{x} \in \P_n(\R).
    \end{equation}
This follows from the fact that if we already have a parameter $\gamma$ upstairs, then by Theorem \ref{prop:addingbelow} we can add a parameter $\gamma$ downstairs and then cancel both. 

\item  The interlacing preservation by the multiplicative convolution with a Laguerre polynomial can be stated in a more general form. Namely, if $\bm a_1\in \R^{i_1}$, $\bm a_2\in \R^{i_2}$, $\bm b_1 \in \R^{j_1}$, $\bm b_2 \in \R^{j_2}$  are such that for the real-rooted hypergeometeric polynomials, the interlacing 
$$\pFq{i_1+1}{j_1}{-n,\ \bm a_1}{ \bm b_1}{x} \preccurlyeq \pFq{i_2+1}{j_2}{-n,\ \bm a_2}{ \bm b_2}{x} $$ 
holds, then, for $\gamma>0$,
$$
\pFq{i_1+1}{j_1+1}{-n,\ \bm a_1}{ \bm b_1, \gamma}{x} \preccurlyeq \pFq{i_2+1}{j_2+1}{-n,\ \bm a_2}{ \bm b_2, \gamma}{x}.
$$

\item Finally, notice that so far we have only been concerned with the case where we multiply by a standard Laguerre polynomial, corresponding to Row 2 in Table \ref{tab:1F1}. However, we can adapt Theorem \ref{prop:addingbelow} (and its modifications that we just mentioned) to include Rows 3 and 4. The reader should be aware that in some cases we need stronger assumptions on the hypergeometric polynomial. For example, if we want to use the non-standard Laguerre polynomials in row 3 in Table \ref{tab:1F1}, which are not in $\P_n(\R_{\geq 0})$, then the other polynomial must belong to $\P_n(\R_{\geq 0})$ rather than $\P_n(\R)$.
\end{enumerate}
\end{remark}

Although Theorem \ref{prop:addingbelow} is rather limited, it already covers useful results from the literature. Notice, for instance, that the well-know fact that 
    $$
    \sum_{k=0}^n a_k x^k\in \P_n(\R) \quad \Rightarrow  \quad  \sum_{k=0}^n \frac{a_k}{k !} x^k\in \P_n(\R)
    $$
    (see \cite[Theorem 2.4.1]{polya1914uber} or \cite[Problem V.1.65]{polya1976problems}) is just a particular case of \eqref{addingBelow} with $\gamma=1$. 

    In the same fashion, we can use convolution with Bessel polynomials (see Table~\ref{tab:1F1} and interlacing \eqref{interlacing2F0}) to derive the following result:
\begin{theorem}
    \label{prop:convBessel}
    Let $\bm a\in \R^i$, $\bm b \in \R^j$, and $\gamma <-n+1$.
    Then
    \begin{equation}
        \label{convBessel1}
        \pFq{i+1}{j}{-n,\ \bm a}{ \bm b}{x} \in \P_n(\R)
     \quad \Longrightarrow \quad 
    \pFq{i+2}{j}{-n,\ \bm a, \gamma}{ \bm b}{x} \in \P_n(\R).
    \end{equation}
    Moreover, if   $0\leq t\leq 2$, then 
    $$
      \pFq{i+1}{j}{-n,\ \bm a}{ \bm b}{x} \in \P_n(\R_{\ge 0})
     \quad \Longrightarrow \quad 
     \pFq{i+2}{j}{-n,\ \bm a, \gamma}{ \bm b}{x}\preccurlyeq\pFq{i+2}{j}{-n,\ \bm a, \gamma-t}{ \bm b}{x}.$$
\end{theorem}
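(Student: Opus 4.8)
The plan is to mirror the proof of Theorem~\ref{prop:addingbelow}, replacing the Laguerre building block by a Bessel one. First I would invoke Theorem~\ref{thm:multiplicativeConvHG} to write
$$
\pFq{i+2}{j}{-n,\ \bm a, \gamma}{ \bm b}{x} = \pFq{i+1}{j}{-n,\ \bm a}{ \bm b}{x} \boxtimes_n \pFq{2}{0}{-n,\ \gamma}{\cdot}{x},
$$
and recall from Table~\ref{tab:1F1} (equivalently, from \eqref{dualityLaguerre2}) that for $\gamma<-n+1$ the factor $\pFq{2}{0}{-n,\ \gamma}{\cdot}{x}$ is a Bessel polynomial of degree exactly $n$ with all roots negative, i.e.\ it lies in $\P_n(\R_{<0})\subset\P_n(\R_{\le 0})$.

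For the first implication, assuming $\pFq{i+1}{j}{-n,\ \bm a}{ \bm b}{x}\in\P_n(\R)$, I would apply the real-rootedness preservation of the multiplicative convolution together with the rule of signs following Proposition~\ref{prop:realrootedness}: convolving a real-rooted polynomial with one whose roots are all $\le 0$ again yields a real-rooted polynomial. Concretely, one replaces the $\gamma$-block by its reflection $x\mapsto -x$ to land in $\P_n(\R_{\ge 0})$, applies Proposition~\ref{prop:realrootedness}(ii), and reflects back using $p(x)\boxtimes_n q(-x)=[p\boxtimes_n q](-x)$. This immediately gives $\pFq{i+2}{j}{-n,\ \bm a, \gamma}{ \bm b}{x}\in\P_n(\R)$.

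For the ``moreover'' part, fix $0\le t\le 2$; since $\gamma<-n+1$ forces $\gamma-t<-n+1$ as well, the interlacing \eqref{interlacing2F0} yields
$$
\pFq{2}{0}{-n,\ \gamma}{\cdot}{x}\preccurlyeq\pFq{2}{0}{-n,\ \gamma-t}{\cdot}{x}.
$$
Both of these are real-rooted polynomials of degree exactly $n$, and by hypothesis $\pFq{i+1}{j}{-n,\ \bm a}{ \bm b}{x}\in\P_n(\R_{\ge 0})$ (hence also of degree $n$), so Proposition~\ref{lem:preservinginterlacingMult} applies with the two Bessel blocks playing the role of the interlacing pair and $\pFq{i+1}{j}{-n,\ \bm a}{ \bm b}{x}$ the fixed factor. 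Identifying both convolutions via Theorem~\ref{thm:multiplicativeConvHG} (and commutativity of $\boxtimes_n$) gives exactly $\pFq{i+2}{j}{-n,\ \bm a, \gamma}{ \bm b}{x}\preccurlyeq\pFq{i+2}{j}{-n,\ \bm a, \gamma-t}{ \bm b}{x}$.

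The only point requiring care---hardly an obstacle---is the orientation of the reflections: because the Bessel blocks carry negative rather than positive roots, one cannot feed them directly into Proposition~\ref{prop:realrootedness}(ii) or into the fixed slot of Proposition~\ref{lem:preservinginterlacingMult}; instead one must reflect to $\P_n(\R_{\ge 0})$ for the first claim, and keep the Bessel blocks as the interlacing pair while convolving with the non-negatively-rooted hypergeometric polynomial for the monotonicity claim. One should also note that the standing assumption \eqref{assumptionsHGFNew} together with $\gamma,\gamma-t<-n+1$ guarantees that all polynomials involved have degree exactly $n$ with non-zero leading coefficient, so the interlacing statements are well posed.
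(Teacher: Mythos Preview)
Your proposal is correct and follows precisely the approach the paper indicates: the paper states the result is derived ``in the same fashion'' as Theorem~\ref{prop:addingbelow} using convolution with Bessel polynomials (Table~\ref{tab:1F1}) and the interlacing \eqref{interlacing2F0}, and you have spelled out exactly those steps. Your care in handling the sign reflection for the first claim and in assigning the Bessel blocks to the interlacing pair (with the non-negatively-rooted polynomial as the fixed factor in Proposition~\ref{lem:preservinginterlacingMult}) for the second claim is appropriate and matches what the paper tacitly assumes.
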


Following the same reasoning of Remark \ref{rem:addingbelow} used to extend Theorem \ref{prop:addingbelow}, we can adapt Theorem \ref{prop:convBessel} to produce related results. For example, we can remove a parameter $\gamma <-n+1$ from downstairs, or we can preserve the interlacing of the given polynomial. We can also obtain a similar result using Rows 6 and 7 of Table \ref{tab:1F1} instead of just Row 5. We avoid the details for the sake of brevity.

Another example of how the free multiplicative convolution with Bessel polynomials allows us to give a straightforward proof of a known result is as follows: by \eqref{laguerreP},
    $$
   r(x)= x^n L^{(0)}_{n}(1/x)= \sum_{k=0}^{n} (-1)^k \binom{n}{k} \frac{1}{k!}\,   x^{n-k}.
$$
Hence, if 
$$
p(x)= \sum_{k=0}^n a_k x^k= \sum_{k=0}^n (-1)^k \left( (-1)^k\, a_{n-k}\right) x^{n-k}
$$
and 
$$
q(x)= \sum_{k=0}^n b_k x^k= \sum_{k=0}^n (-1)^k \left( (-1)^k\, b_{n-k}\right) x^{n-k},
$$
then
\begin{align*}
    p(x) \boxtimes_n q(x) \boxtimes_n r(x) & =    \sum_{k=0}^n (-1)^k \binom{n}{k}^{-1} \frac{a_{n-k} \, b_{n-k}}{k!}\,   x^{n-k} 
   = \frac{(-1)^n}{n!}\, \sum_{k=0}^n    k!\,  a_{k} \, b_{k} \,   (-x)^{k}. 
\end{align*}
We have that $r\in \P_n(\R_{>0})$; if  additionally,  $p, q\in \P_n(\R_{\leq 0})$, then by Proposition \ref{prop:realrootedness} (and the ``rule of sign'' formulated there), we have that $p(x) \boxtimes_n q(x) \boxtimes_n r(x) \in \P_n(\R_{\ge 0})$. Thus,  
$$
p, q \in\R_{\leq 0} \quad  \Rightarrow \quad \sum_{k=0}^{n} k !\,  a_k b_k x^k \text{ has only real zeros},
$$
which is a weaker version of Schur's theorem (where the assumptions are that $p, q \in \P_n(\R)$  and $ a_k, b_k \ge  0$   for all $k$); it was first proved in \cite{schur1914zwei}, and appears also in \cite[Problems V.2.155-156]{polya1976problems}. 

\medskip 

If we now consider the free multiplicative convolution with Jacobi polynomials, we get
\begin{theorem}
    \label{prop:adding.above.below}
    Let $\bm a\in \R^i$, $\bm b \in \R^j$,  $\beta>0$, and $\alpha>\beta +n-1$. Then
    $$
    \pFq{i+1}{j}{-n,\ \bm a}{ \bm b}{x} \in \P_n(\R)
     \quad \Longrightarrow \quad 
    \pFq{i+1}{j+1}{-n,\ \bm a,\ \alpha}{ \bm b,\ \beta}{x} \in \P_n(\R),$$
Moreover, for $0\leq t,s\leq 2$:
$$ 
  \pFq{i+1}{j}{-n,\ \bm a}{ \bm b}{x} \in \P_n(\R_{\ge 0})
     \quad \Longrightarrow \quad 
     \pFq{i+1}{j+1}{-n,\ \bm a,\ \alpha+s}{ \bm b,\ \beta}{x} \preccurlyeq  \pFq{i+1}{j+1}{-n,\ \bm a,\ \alpha+t}{ \bm b,\ \beta+t}{x}.$$ 
\end{theorem}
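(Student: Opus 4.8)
The strategy is to realize the target polynomial as a finite free multiplicative convolution of the given real-rooted hypergeometric polynomial with a suitably chosen Jacobi polynomial, and then invoke the zero- and interlacing-preservation properties of $\boxtimes_n$ from Section~\ref{sec:realrootednessfreeconv}. First I would identify, via Theorem~\ref{thm:multiplicativeConvHG}, the right ``building block'': I need a hypergeometric polynomial of the form $\pFq{2}{1}{-n,\ \alpha'}{\beta'}{x}$ so that convolving it with $\pFq{i+1}{j}{-n,\ \bm a}{\bm b}{x}$ appends the parameter $\alpha$ upstairs and $\beta$ downstairs. By Theorem~\ref{thm:multiplicativeConvHG} we have
$$
\pFq{i+1}{j}{-n,\ \bm a}{\bm b}{x}\boxtimes_n\pFq{2}{1}{-n,\ \alpha}{\beta}{x}=\pFq{i+2}{j+1}{-n,\ \bm a,\ \alpha}{\bm b,\ \beta}{x},
$$
so the block is exactly $\pFq{2}{1}{-n,\ \alpha}{\beta}{x}$. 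Using the dictionary \eqref{HPJacobiPfla1} this polynomial is $n!\,P_n^{(\beta-1,\,-n+\alpha-\beta)}(1-2x)$, so I must check that under the hypotheses $\beta>0$ and $\alpha>\beta+n-1$ its zeros all lie in $\R_{\ge 0}$, so that part (iii) of Proposition~\ref{prop:realrootedness} applies.

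The real-rootedness (with nonnegative roots) of $\pFq{2}{1}{-n,\ \alpha}{\beta}{x}$ under these hypotheses should be read off Table~\ref{tab:2F1}: with $\mathbf a=\alpha$ and $\mathbf b=\beta$, the condition $\beta>0$ places $\mathbf b$ in $(-\Z_n)\cup\R_{\ge0}$, and the condition $\alpha>\beta+n-1>\beta+n-2$ places $\mathbf a$ in $\{\beta+1,\beta+2,\dots\}\cup\R_{>\beta+n-2}$; the second row of Table~\ref{tab:2F1} then gives roots in $\R_{\ge0}$. Hence $\pFq{2}{1}{-n,\ \alpha}{\beta}{x}\in\P_n(\R_{\ge0})$, and Proposition~\ref{prop:realrootedness}(iii) immediately yields $\pFq{i+2}{j+1}{-n,\ \bm a,\ \alpha}{\bm b,\ \beta}{x}\in\P_n(\R)$ whenever $\pFq{i+1}{j}{-n,\ \bm a}{\bm b}{x}\in\P_n(\R)$ — in fact in $\P_n(\R_{\ge0})$ if the first factor is. This proves the first implication.

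For the interlacing assertion, I would invoke the interlacing relation \eqref{interlacing2F1_1} for ${}_2F_1$ polynomials: since $\beta>0$ and $\alpha>n+\beta$, it gives, for $0\le s,t\le2$,
$$
\pFq{2}{1}{-n,\ \alpha+s}{\beta}{x}\preccurlyeq\pFq{2}{1}{-n,\ \alpha+t}{\beta+t}{x},
$$
and both of these are in $\P_n(\R_{\ge0})$ by the same Table~\ref{tab:2F1} argument. Then, assuming $\pFq{i+1}{j}{-n,\ \bm a}{\bm b}{x}\in\P_n(\R_{\ge0})$, Proposition~\ref{lem:preservinginterlacingMult} (preservation of interlacing under $\boxtimes_n$ by a nonnegatively-rooted polynomial) applied with $q=\pFq{i+1}{j}{-n,\ \bm a}{\bm b}{x}$ transfers this interlacing through the convolution, and Theorem~\ref{thm:multiplicativeConvHG} identifies the two resulting polynomials as $\pFq{i+1}{j+1}{-n,\ \bm a,\ \alpha+s}{\bm b,\ \beta}{x}$ and $\pFq{i+1}{j+1}{-n,\ \bm a,\ \alpha+t}{\bm b,\ \beta+t}{x}$, which is exactly the claim.

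The main obstacle is making sure the boundary cases of the parameter conditions are handled cleanly: the hypothesis $\alpha>\beta+n-1$ is strictly stronger than the ``$\R_{>\beta+n-2}$'' requirement needed merely for real-rootedness of the block, and is precisely what is needed to invoke \eqref{interlacing2F1_1} (which requires $a>n+b$). One must also be careful that $\alpha,\beta$ genuinely satisfy the standing assumption \eqref{assumptionsHGFNew} so that all polynomials involved have exact degree $n$ and the convolution identities of Theorem~\ref{thm:multiplicativeConvHG} apply verbatim; since $\beta>0$ and $\alpha>\beta+n-1>0$, this is automatic. Everything else is a direct citation of the machinery already assembled.
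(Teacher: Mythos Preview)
Your proposal is correct and follows exactly the paper's approach: factor via Theorem~\ref{thm:multiplicativeConvHG} as a $\boxtimes_n$-convolution with the Jacobi block $\pFq{2}{1}{-n,\alpha}{\beta}{x}$, use row~2 of Table~\ref{tab:2F1} for its nonnegative roots, and then invoke Proposition~\ref{prop:realrootedness} together with the Jacobi interlacing \eqref{interlacing2F1_1} and Proposition~\ref{lem:preservinginterlacingMult}. One small slip: for the first implication you want part~(ii) of Proposition~\ref{prop:realrootedness} (one factor in $\P_n(\R)$, the other in $\P_n(\R_{\ge 0})$), not part~(iii).
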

For the last assertion, we used interlacing properties \eqref{interlacing2F1_1}--\eqref{interlacing2F1_3}. Following the same reasoning of Remark \ref{rem:addingbelow}, used to extend Theorem \ref{prop:addingbelow}, we can adapt Theorem \ref{prop:adding.above.below} to yield related results. For example, the previous result corresponds to multiplication with a specific type of Jacobi polynomial, corresponding to row 2 of Table \ref{tab:2F1}, but we can also obtain a similar result using the other rows of Table \ref{tab:2F1}.

If we consider Laguerre, Bessel, and Jacobi polynomials as building blocks, and iteratively apply Theorems \ref{prop:addingbelow}, \ref{prop:convBessel}, or \ref{prop:adding.above.below}, we can directly prove that a large class of hypergeometric polynomials (with several parameters) is real-rooted, or even more, their roots are all positive or all negative. Here are some illustrations.

\begin{theorem} \label{prop:42bis}
For any $i, j\geq 0$, if $b_1,\dots,b_j>0$ and  $a_1,\dots,a_{i}<-n+1$ then the roots of the hypergeometric polynomial
$$p(x):={}_{i+1} \mathcal F_j\left(\begin{matrix} -n, \bm a \\ \bm b 
\end{matrix};x\right),
$$
with $\bm a = \left( a_1,\dots,a_{i} \right)$, $\bm b = \left( b_1,\dots,b_{j} \right)$, are all real and have the same sign. Specifically, if $i$ is even, the roots are all positive, and if $i$ is odd, the roots are all negative. 

Furthermore, if additionally $0\leq t \leq 2$ and $i$ is even, then 
    $$\pFq{i+1}{j}{-n,\ \bm a}{ b_1,\ \dots,\ b_j}{x}\preccurlyeq\pFq{i+1}{j}{-n,\ \bm a}{ b_1+t,\ b_2,\ \dots,\ b_j}{x}, $$
and 
 $$
  \pFq{i+1}{j}{-n,\ a_1-t,\ a_2,\ \dots,\ a_{i}}{ \bm b}{x}
 \preccurlyeq \pFq{i+1}{j}{-n,\ a_1,\ \dots,\ a_{i}}{ \bm b}{x}
.$$
If $i$ is odd, both interlacings should be reversed.
\end{theorem}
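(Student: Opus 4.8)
The idea is to build $p$ from elementary real-rooted blocks via the multiplicative convolution and invoke the root-location and interlacing-preservation results of Section~\ref{sec:realrootednessfreeconv}. First I would use Theorem~\ref{thm:multiplicativeConvHG} to write
\[
p(x)=\pFq{i+1}{j}{-n,\ \bm a}{\bm b}{x}
= \pFq{1}{0}{-n}{\cdot}{x}\ \boxtimes_n\ \left(\bigboxtimes_{s=1}^{i}\pFq{2}{0}{-n,\ a_s}{\cdot}{x}\right)\ \boxtimes_n\ \left(\bigboxtimes_{r=1}^{j}\pFq{1}{1}{-n}{b_r}{x}\right),
\]
where $\bigboxtimes$ denotes the iterated multiplicative convolution (well defined since $\boxtimes_n$ is commutative and associative). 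Now each $\pFq{2}{0}{-n,\ a_s}{\cdot}{x}$ with $a_s<-n+1$ lies in $\P_n(\R_{<0})$ (Table~\ref{tab:1F1}, the Bessel row, via \eqref{dualityLaguerre2}), each $\pFq{1}{1}{-n}{b_r}{x}$ with $b_r>0$ lies in $\P_n(\R_{>0})$ (standard Laguerre), and $\pFq{1}{0}{-n}{\cdot}{x}=(1-x)^n\in\P_n(\R_{>0})$. Thus $p$ is a multiplicative convolution of polynomials each having all roots of one sign.

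To track the sign and real-rootedness through the convolution I would apply Proposition~\ref{prop:realrootedness} together with the ``rule of signs'' stated after it. Convolving the $j$ positive-rooted Laguerre factors with $(1-x)^n$ keeps everything in $\P_n(\R_{>0})$ by part~(iii). Then each of the $i$ negative-rooted Bessel factors flips the sign: by the rule of signs, $\P_n(\R_{\ge 0})\boxtimes_n\P_n(\R_{\le 0})\subset\P_n(\R_{\le 0})$ and $\P_n(\R_{\le 0})\boxtimes_n\P_n(\R_{\le 0})\subset\P_n(\R_{\ge 0})$. Since all the relevant roots are actually nonzero (the Bessel factors have strictly negative roots, and the Laguerre ones strictly positive, and multiplying the leading coefficients never produces a zero because by \eqref{leadingcoeffHGF} $e_0(p)=(-1)^n\raising{\bm a}{n}\neq0$ under \eqref{assumptionsHGFNew}), a clean induction on $i$ gives: the product has all roots positive if $i$ is even and all negative if $i$ is odd. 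This proves the first assertion.

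For the interlacing/monotonicity statements I would isolate one denominator (resp. numerator) parameter. Write $p=\widetilde p\boxtimes_n \pFq{1}{1}{-n}{b_1}{x}$, where $\widetilde p$ collects the remaining $i+j-1$ factors; by the first part (applied with $i$ unchanged and $j$ decreased by one) $\widetilde p\in\P_n(\R_{\ge0})$ when $i$ is even. The interlacing \eqref{interlacing1F1}, $\pFq{1}{1}{-n}{b_1}{x}\preccurlyeq\pFq{1}{1}{-n}{b_1+t}{x}$ for $0\le t\le2$ and $b_1>0$, then feeds into Proposition~\ref{lem:preservinginterlacingMult} (multiplicative case, legitimate since $\widetilde p\in\P_n(\R_{\ge0})$) to yield
\[
\pFq{i+1}{j}{-n,\ \bm a}{b_1,\dots,b_j}{x}\preccurlyeq\pFq{i+1}{j}{-n,\ \bm a}{b_1+t,b_2,\dots,b_j}{x}.
\]
For the numerator statement, isolate the Bessel factor in $a_1$: write $p=\widehat p\boxtimes_n\pFq{2}{0}{-n,\ a_1}{\cdot}{x}$. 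One subtlety is that the Bessel factor has \emph{negative} roots, so one cannot apply Proposition~\ref{lem:preservinginterlacingMult} directly; instead I use Remark~\ref{preserveinterlacing.negativezeros}, which handles convolution with a $q\in\P_n(\R_{\le0})$ and reverses the interlacing. Combining the Bessel interlacing \eqref{interlacing2F0}, $\pFq{2}{0}{-n,\ a_1}{\cdot}{x}\preccurlyeq\pFq{2}{0}{-n,\ a_1-t}{\cdot}{x}$ for $0\le t\le2$ and $a_1<-n+1$, with Remark~\ref{preserveinterlacing.negativezeros} applied to $\widehat p\in\P_n(\R_{\le0})$ (which holds since $\widehat p$ has $i-1$ Bessel factors, so $i-1$ odd, i.e. $i$ even), gives
\[
\pFq{i+1}{j}{-n,\ a_1-t,a_2,\dots,a_i}{\bm b}{x}\preccurlyeq\pFq{i+1}{j}{-n,\ a_1,\dots,a_i}{\bm b}{x}.
\]
When $i$ is odd every sign above flips, which is exactly the stated reversal. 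The main obstacle is bookkeeping the sign reversals correctly: each Bessel block both flips the global sign of the root set and (through Remark~\ref{preserveinterlacing.negativezeros}) flips the direction of interlacing, so one must verify that the parity of $i$ and the parity of the number of Bessel factors left in the ``other'' polynomial line up consistently in both assertions; once that is done, everything reduces to the cited black boxes.
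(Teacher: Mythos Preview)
Your proof is correct and follows essentially the same route as the paper: factorize $p$ via Theorem~\ref{thm:multiplicativeConvHG} into Bessel blocks $\pFq{2}{0}{-n,a_s}{\cdot}{x}\in\P_n(\R_{<0})$ and Laguerre blocks $\pFq{1}{1}{-n}{b_r}{x}\in\P_n(\R_{>0})$, apply the rule of signs to count parity, then isolate one block and use interlacing preservation. The paper phrases the interlacing step as an invocation of Theorems~\ref{prop:addingbelow} and~\ref{prop:convBessel} (with $\gamma=b_1$ and $\gamma=a_1$ respectively), whereas you go one level lower and appeal directly to Proposition~\ref{lem:preservinginterlacingMult} and Remark~\ref{preserveinterlacing.negativezeros}; this is the same argument unwrapped, and in fact your version is slightly more explicit about the sign bookkeeping (the paper's appeal to Theorem~\ref{prop:convBessel} tacitly uses the ``adapted'' form mentioned after that theorem, since for $i$ even the remaining factor $\widehat p$ lies in $\P_n(\R_{\le0})$, not $\P_n(\R_{\ge0})$). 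The extra $\pFq{1}{0}{-n}{\cdot}{x}=(1-x)^n$ factor you include is the $\boxtimes_n$-identity and can simply be dropped.
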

\begin{proof}
By Theorem~\ref{thm:multiplicativeConvHG}, 
$$
p(x)= \pFq{2}{0}{-n,\  a_1}{\cdot }{x} \boxtimes_n  \dots \boxtimes_n \pFq{2}{0}{-n,\  a_i}{\cdot }{x}\boxtimes_n \pFq{1}{1}{-n,}{ b_1}{x} \boxtimes_n  \dots \boxtimes_n \pFq{1}{1}{-n,}{ b_j}{x} .
$$
By Table~\ref{tab:1F1}, rows 2 and 5, the first $i$ polynomials in the product are Bessel polynomials with all negative roots, while the the last $j$ polynomials in the product are Laguerre polynomials with all positive roots. By the rule of signs from Section \ref{sec:realrootednessfreeconv} applied several times, we find that all roots of $p$ have the same sign, and the sign depends on the parity of $i$, the number of Bessel polynomials that we multiply. 

The first interlacing result follows from the same factorization, but grouping all terms except the one with $b_1$:
$$
p(x)= \pFq{1}{1}{-n}{b_1}{x} \boxtimes_n  \pFq{i+1}{j-1}{-n,\ \bm a}{ b_2,\ \dots,\ b_j}{x}.
$$
Then, Theorem \ref{prop:addingbelow} with $\gamma=b_1$ yields the desired result. Similarly, the second interlacing result follows from Theorem \ref{prop:convBessel} with $\gamma=a_1$. Of course, by symmetry, the interlacing results hold when we vary any given parameter in the polynomial, and not only $a_1$ or $b_1$.
\end{proof}

The following proposition is established in a similar way, but now we require the use of Jacobi polynomials.
\begin{theorem} \label{prop:42}
If $j\geq i$, $b_1,\dots,b_j > 0$ and $a_1,\dots,a_{i}\in \rr$ such that $a_s\geq n-1+b_s$ for $s=1,\dots,i$, then the hypergeometric polynomial 
$$p(x)=
\pFq{i+1}{j}{-n,\ \bm a}{ \bm b}{x} ,
$$
with $\bm a = \left( a_1,\dots,a_{i} \right)$, $\bm b = \left( b_1,\dots,b_{j} \right)$, 
has all positive roots.
\end{theorem}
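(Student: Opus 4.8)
The plan is to represent $p$ as a finite free multiplicative convolution of elementary pieces that are already known to have only positive zeros, and then invoke the zero-preservation of $\boxtimes_n$. Since $j\ge i$, I would pair the numerator parameter $a_s$ with the denominator parameter $b_s$ for $s=1,\dots,i$, leaving $j-i$ unpaired denominator parameters $b_{i+1},\dots,b_j$. Setting
$q_s(x):=\pFq{2}{1}{-n, a_s}{b_s}{x}$ for $s=1,\dots,i$ and $r_t(x):=\pFq{1}{1}{-n}{b_t}{x}$ for $t=i+1,\dots,j$, repeated application of Theorem~\ref{thm:multiplicativeConvHG} gives
\[
p(x)= q_1\boxtimes_n\cdots\boxtimes_n q_i\boxtimes_n r_{i+1}\boxtimes_n\cdots\boxtimes_n r_j
\]
(when $j=0$, which forces $i=0$, one has instead $p(x)=(1-x)^n$ by \eqref{Power1F0}, whose only root is positive, so this trivial case is set aside). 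The hypothesis $a_s\ge n-1+b_s>0$ ensures \eqref{assumptionsHGFNew}, so that $p$ and all the $q_s$ genuinely have degree $n$.

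Next I would check that each factor lies in $\P_n(\R_{>0})$. For the Laguerre blocks this is immediate: $b_t>0$, and by \eqref{HGFLaguerre} $r_t$ is a constant multiple of the classical Laguerre polynomial $L_n^{(b_t-1)}$ with $b_t-1>-1$, so $r_t\in\P_n(\R_{>0})$ by row~2 of Table~\ref{tab:1F1}. For the Jacobi blocks $q_s$, the input $b_s>0$ and $a_s\ge n-1+b_s$ yields $a_s>b_s+n-2$, which places the pair $(a_s,b_s)$ in row~2 of Table~\ref{tab:2F1}; hence all zeros of $q_s$ are non-negative, and since $b_s\notin(-\Z_n)$ there is no zero at the origin (cf.\ the caption of Table~\ref{tab:2F1}), so in fact $q_s\in\P_n(\R_{>0})$.

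Finally, applying Proposition~\ref{prop:realrootedness}(iii) to the factorization above (iterated over the $j-1$ convolutions) gives $p\in\P_n(\R_{\ge 0})$, and to upgrade this to $\P_n(\R_{>0})$ I would note that, by \eqref{leadingcoeffHGF}, the constant term of $p$ equals $\raising{\bm b}{n}=\prod_{k=1}^{j}b_k(b_k+1)\cdots(b_k+n-1)>0$, so $0$ is not a zero of $p$. The only step requiring any care is the second one: one must verify that the sharp bound $a_s\ge n-1+b_s$ still lands $(a_s,b_s)$ in the ``interior'' branch $\R_{>b_s+n-2}$ of the real-rootedness region for ${}_{2}\mathcal F_{1}$ in Table~\ref{tab:2F1} — it does, since $n-1+b_s>b_s+n-2$, with no limiting argument needed — and that the strict positivity of every $b_k$ prevents a zero from migrating to the origin; the rest is routine bookkeeping with Theorem~\ref{thm:multiplicativeConvHG} and Proposition~\ref{prop:realrootedness}.
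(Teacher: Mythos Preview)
Your proposal is correct and follows essentially the same approach as the paper: factor $p$ via Theorem~\ref{thm:multiplicativeConvHG} into ${}_2\mathcal F_1$ (Jacobi) blocks for the paired parameters and ${}_1\mathcal F_1$ (Laguerre) blocks for the remaining $b_t$'s, then apply Proposition~\ref{prop:realrootedness}(iii). The paper organizes the argument slightly differently---it first treats the case $i=j$ with only Jacobi blocks and then invokes \eqref{addingBelow} for the extra $b_t$'s---but this is exactly your Laguerre factors in disguise; your explicit check that $p(0)=\raising{\bm b}{n}\neq 0$ to upgrade from $\P_n(\R_{\ge 0})$ to $\P_n(\R_{>0})$ is a nice touch that the paper leaves implicit.
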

\begin{proof}
First, we consider the case of $i=j$. Then by Theorem~\ref{thm:multiplicativeConvHG}, 
$$
p(x):= \pFq{2}{1}{-n,\  a_1}{ b_{1}}{x} \boxtimes_n  \dots \boxtimes_n \pFq{2}{1}{-n,\  a_i}{ b_{i}}{x} 
$$
with $b_1,\dots,b_j,\in (0,\infty)$, and $a_s\geq n-1+b_s$ for $s=1,\dots,i$. By \eqref{HPJacobiPfla1} (see also Table~\ref{tab:2F1}, second row), this is a multiplicative convolution of polynomials with zeros in $(0,1)$, so that all roots of $p$ are positive.

Now, if $j>i$, the assertion follows from what we just established by invoking~\eqref{addingBelow}. 
\end{proof}
\begin{remark}
Similarly to Theorem \ref{prop:42bis}, we can obtain interlacing results by varying the parameters of the polynomial. Since this factorization involves Jacobi polynomials, we need to use Theorem \ref{prop:adding.above.below} and vary the parameters accordingly. 
\end{remark}

Our final observation in this section is that, by taking the reciprocal polynomial, the study of ${}_{i+1}F_j$ polynomials is in a certain sense dual to the study of ${}_{j+1}F_i$ polynomials. 
\begin{remark}
\label{rem:pFq.reversed}
Given a polynomial $p\in \pp_n(\cc)$, its \textbf{reciprocal polynomial} is defined formally by 
\begin{equation}
    \label{eq.def.reversed}
\hat{p}(x):=x^n p(1/x).
\end{equation}
Alternatively, its coefficients are given by $e_j(\hat{p}) = (-1)^n e_{n-j}(p),$ for $j=0, 1, \dots, n.$

It is not difficult to check that when restricted to $\pp_n(\cc\setminus \{0\})$, the reciprocation operation is actually an involution that maps a polynomial with roots $\lambda_1,\dots, \lambda_n$, to a polynomial with roots $1/\lambda_1,\dots, 1/\lambda_n$. In particular, the sets $\pp_n(\rr\setminus \{0\})$, $\pp_n(\rr_{>0})$ and $\pp_n(\rr_{<0})$ are invariant under this operation. Moreover, Lemma \ref{lem:pFq.reversed} has the simple consequence that
$$\pFq{i+1}{j}{-n,\ \bm a}{ \bm b}{x}\qquad \text{ and }\qquad \pFq{j+1}{i}{-n,\ -n -{\bm b} +1}{ -n -{\bm a} +1}{(-1)^{i+j}x}$$
are reciprocal polynomials. This implies that the reciprocation operation defines a bijection from the ${}_{i+1}F_j$ polynomials in $\pp_n(\cc\setminus \{0\})$ to the ${}_{j+1}F_i$ polynomials in $\pp_n(\cc\setminus \{0\})$. More precisely, if we let
$$
p(x)=\pFq{i+1}{j}{-n,\ \bm a}{ \bm b}{x}, \quad q(x)= \pFq{j+1}{i}{-n,\ -n -{\bm b} +1}{ -n -{\bm a} +1}{x},
$$
then
$$
p\in \P_n(\R\setminus\{0\})  \quad \Longleftrightarrow \quad q\in \P_n(\R\setminus\{0\}),
$$
and 
$$
p\in \P_n(\R_{>0})  \quad \Longleftrightarrow \quad q \in \begin{cases}
    \P_n(\R_{>0}) & \text{if $i+j$ is even,} \\ 
     \P_n(\R_{<0}) & \text{if $i+j$ is odd.}
\end{cases}
$$

After this discussion, it should be clear that we can focus our study on polynomials of the type $i\leq j$ and then extrapolate the results to the case $i>j$. Notice that the fact that the properties of the Bessel polynomials follow from those of the Laguerre polynomials is the particular case of this bijection when $i=0$, $j=1$.
\end{remark}

In the following sections, we will discuss in more detail what can be said about ${}_2 F_2$, ${}_3 F_1$ and ${}_3 F_2$ hypergeometric polynomials, since these appear frequently in practical applications, but are much less studied than the ${}_2 F_1$ functions. Once again, we seek to illustrate our approach without aspiring to provide comprehensive results on real-rootedness or interlacing of these polynomials.

\subsection{${}_2 F_2$ and ${}_3 F_1$ Hypergeometric polynomials}\label{sec:2F2real}\

Several conclusions about ${}_2 F_2$ and ${}_3 F_1$ polynomials can be reached using the results in Section \ref{ssec:General.hypergeometric.polynomials}. Some of them are known (and we provide a bibliographic reference whenever available or known to us), and others are apparently new.  

By Remark \ref{rem:pFq.reversed}, the ${}_3 F_1$ polynomials are just reciprocal ${}_2 F_2$ polynomials. Therefore, we will focus on the ${}_2 F_2$ family and discuss some consequences for the ${}_3 F_1$ polynomials at the end of the section.

Table \ref{tab:2F2} summarizes some results that we can obtain directly using additive or multiplicative convolution of our building blocks (e.g. Laguerre, Bessel, and Jacobi). In the following, we provide a brief justification when the result is not trivial. 

\begin{table}[htb]
    \centering
    \begin{tabular}{|c|c|c|c|}
    \hline $\mathbf{a}$ & $\mathbf{b_1}$&$\mathbf{b_2}$ & \textbf{Roots in}  \\
\hline $ \R_{<-n+1}$ & $(- \mathbb Z_n)\cup\R_{> 0}$ & $\R_{>0}$& $\R_{\leq 0}$ \\
\hline $\{b_1+1, b_1+2, \dots\} \cup \R_{>b_1+n-2}  $ & $ (- \mathbb Z_n)\cup\R_{> 0} $& $\R_{>0}$ & $\R_{\geq 0}$   \\
\hline $\R_{<-n+1}$ & $ \R_{<a-n+2} \cup\{a-1, a-2, \dots \}$ & $\R_{>0}$ & $\R_{>0}$  \\
\hline $(-n+1,-n+2)$ & $(- \mathbb Z_n) \cup \R_{>-1}  $& $\R_{>0}$ & $\mathbb{R}$\\
\hline $(-n+1,-n+2)$ & $  \R_{<a-n+2} \cup\{a-1, a-2, \dots \} $& $\R_{>0}$ & $\mathbb{R}$  \\
\hline $\R_{<-n+1} \cup \R_{>b_1+n-2}$  & $(-1,0)$ & $\R_{>0}$& $\R$  \\   \hline 
\hline $k+1/2$  & $2-b_2>0$ or $1-b_2>0$ & $\mathbb{R}_{>0}$ 
& $\mathbb{R}_{> 0}$  \\
\hline $b_1+k-1/2$  & $(b_2+t+1)/2$ & $\mathbb{R}_{>0}$ 
& $\mathbb{R}_{> 0}$  \\
\hline $(b_1+1)/2+k$   & $ 2(b_2-1+t)$ & $\mathbb{R}_{>0}$ 
& $\mathbb{R}_{> 0}$  \\
\hline $b_1 /2+k$   & $ 2(b_2+t)-1$ & $\mathbb{R}_{>0}$ 
& $\mathbb{R}_{> 0}$  \\
\hline $b_2 -1/2 $  & $2b_2-2$  & $(0,1 )$ 
& $\mathbb{R} $  \\
\hline $b_2 -1/2 $  & $2b_2-1$  & $(-1, 0 )$ 
& $\mathbb{R} $  \\
\hline $b_2+k-1/2  $  & $2b_2-2$  & $( 1/2, 1)$ 
& $\mathbb{R} $  \\
\hline $k+1/2$  & $1-b_2$ or $2-b_2$   & $ (-1,0)  $ 
& $\mathbb{R} $  \\
\hline
\end{tabular}
\vspace{2mm}
\caption{Real zeros of $\pFq{2}{2}{-n,\  a}{ b_1,\ b_2}{x}$. The first six rows are a consequence of combining the cases from Table \ref{tab:2F1} and Table \ref{tab:1F1} via the multiplicative convolution identity \eqref{2F2Real1}. For the remaining rows, see Proposition \ref{Prop:410new}. In each appearance in the table above, $k\in \mathbb Z_n $ while $t\in \mathbb Z_n \cup  \rr_{>n-2}$. Recall that we always assume \eqref{assumptionsHGFNew}, so that $  a \notin (-\mathbb Z_n)$, and the polynomial is of degree exactly $n$. Moreover, a zero at $x=0$ appears only when either $ b_i \in (- \mathbb Z_n)$.  }
    \label{tab:2F2}
\end{table}

A particular case of the identity from Theorem~\ref{thm:multiplicativeConvHG} is
\begin{equation} \label{2F2Real1}
    \pFq{2}{2}{-n,\ a}{b_1,\ b_2}{x}=\pFq{2}{1}{-n,\ a}{b_1}{x} \boxtimes_n \pFq{1}{1}{-n}{b_2}{x}.
\end{equation}

Recall that whenever both polynomials on the right-hand side of \eqref{2F2Real1} are in $\P_n(\rr)$ and, additionally, one of them has all its roots of the same sign,  we can conclude that the ${}_2F_2$ polynomial on the left in \eqref{2F2Real1} is also in $\P_n(\rr)$; this is the essence of Theorem \ref{prop:addingbelow} in the case $i=j=1$. Furthermore, we can narrow down the location of its roots to a smaller subset of $\R$ provided that we have additional information on the roots of the two factors on the right.  Therefore, the first six rows of Table \ref{tab:2F2} is a result of combining rows 1--6 of Table \ref{tab:2F1} with row 2 of Table \ref{tab:1F1} via the identity \eqref{2F2Real1}. The other rows are a consequence of the following proposition:
\begin{proposition}
\label{Prop:410new}
Let $n\ge 4$, $k \in \Z_n$, and $t\in \Z_n \cup\rr_{>n-2}$.

The polynomial
\begin{equation*}
    \pFq{2}{2}{-n,\ a}{b_1,\ b_2}{x} \in \P_n(\R_{>0})
\end{equation*}
if $b_2>0$, and additionally, one of the following conditions holds:
\begin{enumerate}[(i)]
  \item \label{Prop:410new1}
   $ a=  k + 1/2$ and
   either $b_1=2-b_2>0$ or $b_1=1-b_2>0$;

 \item \label{Prop:410new2} 
    $ a= b_1 + k -1/2$, and 
$b_1 = (b_2+t+1)/2$;

 \item \label{Prop:410new3}
   $ a= (b_1+1)/2+k  $, and $b_1 =2(b_2-1+t)$;

\item \label{Prop:410new4}
   $ a= b_1 /2+k$ and $b_1 =2(b_2+t)-1$.

\vspace{.2cm}

\hspace{-1.3cm} The polynomial  
\begin{equation*}
    \pFq{2}{2}{-n,\ a}{b_1,\ b_2}{x} \in \P_n(\R)
\end{equation*}
if one of the following conditions holds:

\vspace{.2cm}

    \item \label{Prop:410new5} 
    $ a= b_2  -1/2 $,   
$b_1 = 2b_2 -2$, and $b_2\in (0,1)$;

   \item \label{Prop:410new6}
   $ a=  b_2-1/2$, $b_1=2b_2-1$, and $b_2\in (-1,0)$;
   
  \item \label{Prop:410new7} 
    $ a= b_2 + k -1/2 $,  
$b_1 = 2b_2 -2$, and $b_2\in (1/2,1)$;

 \item \label{Prop:410new8}
   $ a=  k +1/2  $, and   $b_1  + b_2\in \{1,2\}$, if   $b_2\in (-1,0) $.
\end{enumerate}    
\end{proposition}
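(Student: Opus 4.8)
The plan is to represent each of the eight polynomials (or its reciprocal, see Remark~\ref{rem:pFq.reversed}) as a finite free convolution of the ``building blocks'' catalogued in Tables~\ref{tab:1F1} and~\ref{tab:2F1}, and then locate its zeros using Proposition~\ref{prop:realrootedness} and the rule of signs that follows it. The key observation is that ${}_2\mathcal{F}_2$ polynomials are reciprocals of ${}_3\mathcal{F}_1$ polynomials (Remark~\ref{rem:pFq.reversed}, with $i+j=3$ odd, so that $\P_n(\R_{<0})$ on the ${}_3\mathcal{F}_1$ side corresponds to $\P_n(\R_{>0})$ on the ${}_2\mathcal{F}_2$ side, while $\P_n(\R)$ corresponds to $\P_n(\R)$), and that ${}_3\mathcal{F}_1$ polynomials arise from products of Bessel-type and confluent hypergeometric functions: by Example~\ref{exm:2F0.plus.2F0} the additive convolution of two Bessel polynomials $\pFq{2}{0}{-n,\ \alpha}{\cdot}{x}$ and $\pFq{2}{0}{-n,\ \beta}{\cdot}{x}$ equals a constant multiple of $\pFq{4}{2}{-n,\ \alpha,\ \beta,\ \alpha+\beta+n}{\tfrac{\alpha+\beta}{2},\ \tfrac{\alpha+\beta+1}{2}}{x/4}$, and whenever $\beta\in\{\alpha-1,\alpha,\alpha+1\}$ or $\alpha+\beta=-2n+1$ (or one uses a sign-flipped or rescaled argument, as allowed by Remark~\ref{remarkParity}(iii)) a numerator parameter cancels a denominator parameter, collapsing this ${}_4\mathcal{F}_2$ to a ${}_3\mathcal{F}_1$ polynomial. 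Its reciprocal, computed via Lemma~\ref{lem:pFq.reversed}, is then exactly a ${}_2\mathcal{F}_2$ polynomial of the shape listed in items~(\ref{Prop:410new1})--(\ref{Prop:410new8}); Clausen-type and Gauss/Kummer quadratic identities (recognizable by the $\pm\tfrac12$ in the parameters, cf.\ Example~\ref{example39}, and collected e.g.\ in Chapter~2 of~\cite{grinshpan2013generalized}) supply, through Corollary~\ref{cor.additive.conv.as.poruduct.HG} and Theorem~\ref{thm.additive.conv.HG}, the remaining shapes.

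\textbf{Base cases.} I would first treat $k=0$ (and, where it occurs, $t=0$). Each relevant product formula is chosen, and the parameters reparametrized, so that the two Bessel polynomials entering the convolution have parameters $\alpha,\beta$ with a controlled location: for items~(\ref{Prop:410new1})--(\ref{Prop:410new4}) one arranges $\alpha,\beta<-n+1$, so both Bessel polynomials lie in $\P_n(\R_{<0})$ (row~5 of Table~\ref{tab:1F1}); since the additive finite free convolution preserves the half-line $\R_{\le 0}$ (a standard property of $\boxplus_n$, e.g.\ $\lambda_{\max}(p\boxplus_n q)\le\lambda_{\max}(p)+\lambda_{\max}(q)$, see~\cite{marcus}), the convolution, hence also the collapsed ${}_3\mathcal{F}_1$, stays in $\P_n(\R_{<0})$, and its reciprocal is in $\P_n(\R_{>0})$. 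For items~(\ref{Prop:410new5})--(\ref{Prop:410new8}) the construction is the same, but now exactly one of $\alpha,\beta$ is forced into the interval $(-n+1,-n+2)$ --- row~6 of Table~\ref{tab:1F1}, where the Bessel polynomial is real-rooted but of mixed sign --- so the convolution is only guaranteed to lie in $\P_n(\R)$ by Proposition~\ref{prop:realrootedness}(i), and so is its reciprocal; this is precisely why the hypotheses in~(\ref{Prop:410new5})--(\ref{Prop:410new8}) (e.g.\ $b_2\in(0,1)$, $b_2\in(-1,0)$, $b_2\in(1/2,1)$) are two-sided rather than one-sided. In every item the linear relations among $a$, $b_1$, $b_2$ together with the half-integer offsets are exactly the conditions that both trigger the cancellation ${}_4\mathcal{F}_2\rightsquigarrow{}_3\mathcal{F}_1$ and place $\alpha,\beta$ in the prescribed ranges; checking this reduces to solving a small linear system.

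\textbf{Absorbing $k$ and $t$.} The integer shift $k\in\Z_n$ is handled either by iterating the interlacing-preserving Bessel shift~\eqref{interlacing2F0} in unit steps --- all intermediate parameters staying below $-n+1$, so that real-rootedness and the sign are preserved --- or, where the $k$-dependence sits in a single ${}_2\mathcal{F}_1$ factor, by the Jacobi degree-reduction identities~\eqref{2ndreduction2F1}--\eqref{reduction2F14bis}, which peel off a factor $x^{k}$ and/or a power of $(1-x)$ (zeros at $0$ and $1$, harmless for both real-rootedness and the sign pattern) and leave a genuine lower-degree polynomial of the same type; the hypothesis $n\ge 4$ guarantees enough room for these reductions and keeps all the building blocks in the relevant rows of the tables. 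The parameter $t\in\Z_n\cup\R_{>n-2}$ is absorbed by a multiplicative convolution with a suitable Laguerre or Jacobi polynomial, using Theorems~\ref{prop:addingbelow}, \ref{prop:convBessel} and~\ref{prop:adding.above.below}; the admissible set $\R_{>n-2}$ for $t$ is exactly the validity range of the interlacing/monotonicity facts~\eqref{interlacing1F1}, \eqref{interlacing1F1bis}, \eqref{interlacing2F1_1}--\eqref{interlacing2F1_3} invoked there, which incidentally also yields, at no extra cost, the corresponding interlacing statements (in the spirit of the remark following Theorem~\ref{prop:42}).

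\textbf{Main obstacle.} The difficulty is bookkeeping rather than a single hard idea. For each of the eight items one must select the correct classical product or quadratic-transformation formula, run it through Corollary~\ref{cor.additive.conv.as.poruduct.HG} (or Theorem~\ref{thm.additive.conv.HG} directly when the argument is a multiple of $x$ or of $x^2$), apply the reciprocal dictionary of Lemma~\ref{lem:pFq.reversed}, and then track the reparametrization through the $k$- and $t$-reductions, all while verifying that the final sign constraints on the Bessel (or Laguerre, or Jacobi) factors match the stated hypotheses and that the degenerate sub-cases --- a Bessel parameter hitting the endpoint $-n+1$, a denominator parameter falling into $-\Z_n$ (forcing a zero at the origin, cf.\ \eqref{assumptionsHGFNew} and~\eqref{reversedLaguerre}), or the ${}_4\mathcal{F}_2\rightsquigarrow{}_3\mathcal{F}_1$ collapse imposing an extra inequality --- remain consistent with the claimed root location. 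Each such verification is routine but lengthy, and the principal risk is an off-by-one slip in the many parameter shifts.
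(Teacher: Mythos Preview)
Your overall architecture matches the paper's proof: start from Example~\ref{exm:2F0.plus.2F0} (additive convolution of two Bessel polynomials yielding a ${}_4\mathcal F_2$), collapse one numerator/denominator pair by choosing the Bessel parameters so that a cancellation occurs, pass to the reciprocal via Lemma~\ref{lem:pFq.reversed}, and then extend in $k$ and $t$ by multiplicative convolution. The paper actually reciprocates first (getting the ${}_3\mathcal F_3$ in \eqref{eq:3F3_1}) and then cancels, but that order is immaterial.

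Two points need sharpening. First, no Clausen/Kummer identities are used: the four base ${}_2\mathcal F_2$ polynomials all come from the single Bessel product formula of Example~\ref{exm:2F0.plus.2F0}, via the four specialisations $c=d-1$, $c=d$, $c=2-d$, $c=1-d$ of \eqref{eq:3F3_1}; these already produce items~\eqref{Prop:410new5}, \eqref{Prop:410new6} and the $k=t=0$ cases of the rest. Second, your mechanism for absorbing $k$ and $t$ is not quite right. Iterating the Bessel interlacing~\eqref{interlacing2F0} only compares zeros of \emph{different} Bessel polynomials; it does not move the numerator parameter of the ${}_2\mathcal F_2$ by an integer. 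What the paper does instead is a single multiplicative convolution with a ${}_2\mathcal F_1$ that swaps a parameter: identity~\eqref{multIdentityCancellation} with the factor $\pFq{2}{1}{-n,\ d+k-1/2}{d-1/2}{x}$ (this is precisely the polynomial of \eqref{eq.curious.b.bplusk}, hence in $\P_n(\R_{\ge 0})$ for $d>1/2$) shifts the numerator by $k$; identity~\eqref{multIdentityCancellation2} with a factor from row~2 of Table~\ref{tab:2F1} replaces a denominator and introduces $t$. In particular, the admissible set $t\in\Z_n\cup\R_{>n-2}$ is exactly the condition $a\in\{b+1,b+2,\dots\}\cup\R_{>b+n-2}$ from that row, not an artefact of the interlacing windows~\eqref{interlacing1F1}--\eqref{interlacing2F1_3}.
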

Recall that in practice, $b_1$ and $b_2$ are indistinguishable, and their the roles can be interchanged.  
\begin{proof}
In Example \ref{exm:2F0.plus.2F0} we wrote the polynomial (up to a constant term, and a change of variable $x/4\mapsto x$) 
$$
p(x)=\pFq{4}{2}{-n,\ -c-n+1,\ -d-n+1,\ -c-d-n+2}{-\frac{c+d}{2}-n+1,\ -\frac{c+d-1}{2}-n+1}{x}
$$
as an additive convolution of two Bessel polynomials. 
Recall that the Bessel polynomials \eqref{besselPexample311} in Example \ref{exm:2F0.plus.2F0} 
have real roots for $c,d>-1$ and only negative roots for $c,d>0$. Since by \eqref{leadingcoeffHGF}, for this polynomial $p$,
$$
\frac{e_n(p)}{e_0(p)} = \frac{(-1)^n}{4^n} \frac{\raising{c+d+n-1}{n}}{\raising{c}{n} \raising{d}{n}},
$$
we conclude that  for $c,d>-1$, $c, d \neq 0$, an appropriately normalized $p$ has no roots at the origin and $p\in \P_n(\rr)$. Moreover,  $p\in \P_n(\rr_{<0})$ if $c,d>0$.

As a next step, we can use the reciprocal polynomial as in Remark \ref{rem:pFq.reversed} to transfer these results to a $_3 F_3$ polynomial: namely, we get that 
\begin{equation}
    \label{eq:3F3_1}
    \pFq{3}{3}{-n,\ \frac{c+d}{2}, \ \frac{c+d-1}{2}}{c,\ d,\ c+d-1}{x}
\end{equation}
is real rooted for $c,d>-1$, with all roots positive  for $c,d>0$.

Next, we can get a ${}_2 F_2$ function using the cancellation of some parameters in the hypergeometric expression above; obviously, all conclusions about the location of their zeros remain. Referring to the cases enumerated in the statement of the theorem:
\begin{enumerate}[(a)]
    \item Setting in \eqref{eq:3F3_1} $c=d-1$  we conclude that  $
    \pFq{2}{2}{-n,\ d- 1/2}{d,\ 2d-2}{x}
    $
    is real-rooted for $d>0$, $d\neq 1$, with all roots positive for $d>1$.

    \item With  $c=d$ in \eqref{eq:3F3_1}, we get that $
    \pFq{2}{2}{-n,\ d- 1/2}{d,\ 2d-1}{x}
    $
 is real-rooted for $d>-1$, $d\neq 0$, with all roots positive for $d>0$.
    \item With $c=2-d$ in \eqref{eq:3F3_1}, we see that  $
    \pFq{2}{2}{-n,\ 1/2}{d,\ 2-d}{x}
    $
    is real-rooted for $-1<d<3$, $d\neq 0, 2$, with all roots positive for $0<d<2$.
    \item Finally, $c=1-d$ yield that $
    \pFq{2}{2}{-n,\ 1/2}{d,\ 1-d}{x}
    $
    is real-rooted for $-1<d<2$, $c\neq 0, 1$, with all roots positive for $0<d<1$. 
 \end{enumerate}    
 the assertions in (a) and (b) for the real-rooted case yield \eqref{Prop:410new5} and \eqref{Prop:410new6} of the proposition, respectively. All other assertions can be extended to larger families by ``replacing'' a parameter upstairs by using the finite free multiplicative convolution of the polynomials from above with certain ${}_2 F_1$ polynomials from row 1 of Table \ref{tab:2F1},\footnote{\, Obviously, there are some alternative (sometimes more direct) ways to obtain these results.}
\begin{equation}
    \label{multIdentityCancellation}
    \pFq{2}{2}{-n,\ a_1}{b_1,\ b_2}{x}=\pFq{2}{2}{-n,\ a_2}{b_1, \ b_2}{x} \boxtimes_n \pFq{2}{1}{-n, \ a_1 }{a_2 }{x}.
\end{equation}
Namely, using \eqref{multIdentityCancellation} to combine polynomials from the assertions (a) and (b) above with the hypergeometric polynomial 
$\pFq{2}{1}{-n,\ d+k-1/2}{ d-1/2}{x}$, yields that for $k=1,2,\dots,n-1$,
\begin{enumerate} 
    \item[(a')] \label{2F2-1c} 
    $
    \pFq{2}{2}{-n,\ d+k-1/2}{d,\ 2d-2}{x}
    $
    is real-rooted for $d>1/2$, $d\neq 1$, with all roots positive for $d>1$.

    \item[(b')] \label{2F2-2c} 
    $
    \pFq{2}{2}{-n,\ d+k- 1/2}{d,\ 2d-1}{x}
    $
    has all roots positive for $d>1/2$.
\end{enumerate}   
the real-rooted case in (a') yields \eqref{Prop:410new7} from the proposition. 

Analogously, applying \eqref{multIdentityCancellation} to combine polynomials from (c) and (d) above with the hypergeometric polynomial  $\pFq{2}{1}{-n,\ k+1/2}{ 1/2}{x}$, we conclude that
 \begin{enumerate} 
      \item[(c')]  
    $
    \pFq{2}{2}{-n,\ k+1/2}{d,\ 2-d}{x}
    $
    is real-rooted for $-1<d<3$, $d\neq 0, 2$, with all roots positive for $0<d<2$.
    
    \item[(d')]  
    $
    \pFq{2}{2}{-n,\ k+1/2}{d,\ 1-d}{x}
    $
    is real-rooted for $-1<d<2$, $d\neq 0, 1$, with all roots positive for $0<d<1$.
\end{enumerate}  
These yield \eqref{Prop:410new1} of the proposition, in the case of all positive roots, and  \eqref{Prop:410new8}  otherwise.

In a similar fashion, we can replace some parameters downstairs. For instance, using 
\begin{equation}
\label{multIdentityCancellation2}    
\pFq{2}{2}{-n,\ a_1}{b_1,\ b_2}{x}=\pFq{2}{2}{-n,\ a_1}{b_1, \ b_3}{x} \boxtimes_n \pFq{2}{1}{-n, \ b_3 }{b_2 }{x}
\end{equation}
to combine polynomials from (a') and (b') above  with the polynomials $\pFq{2}{1}{-n,\ 2d-2}{ b}{x}$ and $\pFq{2}{1}{-n,\ 2d-1}{ b}{x}$, all of them satisfying the conditions of row 2 in table \ref{tab:2F1}, yields that for $n\geq 4$, $b>0$ and $k\in \Z_n$, the following families of ${}_2\mathcal{F}_2$ polynomials are real-rooted and their zeros all positive:
\begin{enumerate} 
    \item[(a'')] 
    $
    \pFq{2}{2}{-n,\ d+k-1/2}{d,\ b}{x}
    $
     for $2d-1=b+t$, where either $t\in \zz_n$ or $t>n-2$.
\end{enumerate}
This gives us \eqref{Prop:410new2} of the proposition. If we use \eqref{multIdentityCancellation2} to combine polynomials from (a') and (b') above with $\pFq{2}{1}{d}{ b}{x}$ satisfying the conditions of row 2 in table \ref{tab:2F1}, then we find that for $n\geq 4$, $b>0$ and $k\in\Z_n$, the following families of ${}_2\mathcal{F}_2$ polynomials are real-rooted and all their zeros positive:
\begin{enumerate} 
    \item[(b'')] 
    $
    \pFq{2}{2}{-n,\  d+k-1/2}{2d-2,\ b}{x}
    $
    for $d=b+t$, where either $t\in \zz_n$ or $t>n-2$.

    \item[(c'')] 
    $
    \pFq{2}{2}{-n,\  d+k-1/2}{2d-1,\ b}{x}
    $
for $d=b+t$, where either $t\in \zz_n$ or $t>n-2$.
\end{enumerate}
These imply \eqref{Prop:410new3} and \eqref{Prop:410new4}, respectively.
\end{proof}

\begin{remark}
As we mentioned earlier, our approach is not exhaustive and Table \ref{tab:2F2} does not contain all combinations of parameters that produce real-rooted polynomials. However,  we can use our method to exclude some combinations of parameters.  For example, the contrapositive of (ii) in Proposition \ref{prop:realrootedness} states that $p\boxtimes_n q\notin \P_n(\rr),\ q\in \P_n(\rr_{\geq 0}) \ \Rightarrow \ p\notin \P(\rr)$. We can use it with factorization 
\begin{equation}
    \pFq{2}{2}{-n,\ a}{b_1,\ b_2}{x}\boxtimes_n \pFq{2}{0}{-n,\  b_2}{\cdot }{x}=\pFq{2}{1}{-n,\ a}{b_1}{x}.
\end{equation}
Notice that for $b_2 <-n+1$, the second term on the left-hand side is a Bessel polynomial with all negative roots. Thus, if the pair $(a, b_1)$ does not satisfy some of the conditions of Table \ref{tab:2F1} (so that the right-hand side is a Jacobi polynomial with at least one complex root), then $\pFq{2}{2}{-n,\ a}{b_1,\ b_2}{x}$ also has at least one complex root. This kind of argument allows one to find several major regions of parameters where we can assure that the corresponding polynomials are not real-rooted polynomials. For instance, we have that
$$\pFq{2}{2}{-n,\ a}{b_1,\ b_2}{x} \notin \P(\rr),$$
whenever one of the following assumptions holds:
\begin{itemize}
    \item $b_1, b_2 <-n+1$ and $a >0$ or $a<\max\{b_1-n+2,b_2-n+2\}$; or
    \item $b_1 <-n+1$, $a >0$ and  $b_2>a+n-2$.
\end{itemize}
\end{remark}

We can also draw conclusions about the interlacing of the ${}_2\FF_2$ polynomials using \eqref{interlacing1F1}--\eqref{interlacing1F1bis} and \eqref{interlacing2F1_1}--\eqref{interlacing2F1_3}. For instance, applying Theorem \ref{prop:addingbelow} to the polynomials in Table \ref{tab:2F1} we can obtain the following result that covers several important cases:
\begin{corollary}
Let $c \in (0,\infty)$ and let $a,b\in \rr$ be two parameters such that $\pFq{2}{2}{-n, a}{ b}{x}\in \P_n(\rr_{\ge 0})$ (for instance,  $(a,b)$ belong to a case covered in Table \ref{tab:2F1}). Then 
$$
\pFq{2}{2}{-n,  a}{ b, c}{x} \preccurlyeq \pFq{2}{2}{-n,  a}{ b, c+t}{x}, \quad 0\leq t\leq 2.
$$
If $\pFq{2}{2}{-n, a}{ b}{x}\in \P_n(\rr_{\le 0})$, the interlacing should be reversed.
\end{corollary}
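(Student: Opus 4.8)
The plan is to reduce this corollary directly to Theorem~\ref{prop:addingbelow} in the special case $i=j=1$. First I would observe that by the multiplicative convolution identity \eqref{2F2Real1},
$$
\pFq{2}{2}{-n,\ a}{b,\ c}{x}=\pFq{2}{1}{-n,\ a}{b}{x} \boxtimes_n \pFq{1}{1}{-n}{c}{x},
$$
and that the second factor $\pFq{1}{1}{-n}{c}{x}$ is, up to a positive constant, the Laguerre polynomial $L_n^{(c-1)}$ with $c>0$, hence lies in $\P_n(\R_{>0})$ (row~2 of Table~\ref{tab:1F1}). By hypothesis the first factor $\pFq{2}{1}{-n,\ a}{b}{x}$ lies in $\P_n(\R_{\ge 0})$, so assertion~(iii) of Proposition~\ref{prop:realrootedness} already gives $\pFq{2}{2}{-n,\ a}{b,\ c}{x}\in\P_n(\R_{\ge 0})$.

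Next, for the interlacing statement I would apply the second (interlacing) part of Theorem~\ref{prop:addingbelow} verbatim, with the roles: $\bm a=(a)$ (so $i=1$), $\bm b=(b)$ (so $j=1$), and $\gamma=c>0$. The hypothesis of that theorem is exactly that $\pFq{2}{1}{-n,\ a}{b}{x}\in\P_n(\R_{\ge 0})$, which we are assuming; its conclusion, for $0\le t\le 2$, is precisely
$$
\pFq{2}{2}{-n,\ a}{b,\ c}{x}\preccurlyeq \pFq{2}{2}{-n,\ a}{b,\ c+t}{x},
$$
which is what is claimed. So the positive-roots case is an immediate instantiation, requiring no new argument beyond citing the earlier theorem and noting that the examples from Table~\ref{tab:2F1} that land in $\P_n(\R_{\ge 0})$ satisfy the needed hypothesis.

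For the final sentence (the case $\pFq{2}{2}{-n,\ a}{b}{x}\in\P_n(\R_{\le 0})$ with reversed interlacing), I would use the reflection trick already recorded in Remark~\ref{preserveinterlacing.negativezeros}: writing $p(x)\boxtimes_n q(-x)=[p\boxtimes_n q](-x)$ and $p\preccurlyeq q \iff q(-x)\preccurlyeq p(-x)$, one transfers the positive-root interlacing to the negative-root setting, with the interlacing direction flipped. Concretely, if $\pFq{2}{1}{-n,\ a}{b}{x}$ has all roots $\le 0$, then its reflection has all roots $\ge 0$, the $c$-Laguerre factor is unchanged in spirit (apply Proposition~\ref{lem:preservinginterlacingMult} in the form of Remark~\ref{preserveinterlacing.negativezeros}), and one obtains $\pFq{2}{2}{-n,\ a}{b,\ c+t}{x}\preccurlyeq \pFq{2}{2}{-n,\ a}{b,\ c}{x}$.

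There is essentially no obstacle here: the corollary is a direct specialization of Theorem~\ref{prop:addingbelow} ($i=j=1$) packaged together with the classification of which $(a,b)$ pairs in Table~\ref{tab:2F1} give $\P_n(\R_{\ge 0})$ (resp.\ $\P_n(\R_{\le 0})$). The only point requiring a word of care is making sure the hypothesis ``$\pFq{2}{1}{-n,\ a}{b}{x}\in\P_n(\R_{\ge 0})$'' is genuinely available — this is why the statement phrases it as ``for instance, $(a,b)$ belong to a case covered in Table~\ref{tab:2F1}'' rather than attempting an intrinsic characterization — and noting the harmless degenerate subtleties (a root at $0$ when $b\in -\Z_n$, or degree drop), which do not affect the interlacing relation $\preccurlyeq$ since it is defined with non-strict inequalities.
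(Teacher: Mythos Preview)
Your proposal is correct and matches the paper's own approach: the paper simply states that the corollary is obtained by ``applying Theorem~\ref{prop:addingbelow} to the polynomials in Table~\ref{tab:2F1}'', which is exactly the specialization $i=j=1$, $\gamma=c$ that you spell out, and your handling of the $\P_n(\rr_{\le 0})$ case via Remark~\ref{preserveinterlacing.negativezeros} is the intended mechanism. (You are also right to read the hypothesis as a ${}_2\mathcal F_1$ condition---the ``${}_2\mathcal F_2$'' with a single lower parameter in the statement is a typographical slip, as the reference to Table~\ref{tab:2F1} confirms.)
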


\medskip

Several interesting results on real-rootedness and zero interlacing of ${}_2 F_2$ polynomials have been obtained in \cite[Section 2]{johnston2015quasi}. We revisit them next, using our approach, which allows us to obtain some generalizations.

For example, the fact that for $c>0$, $k\in \zz_n$, and $ b+k \notin (-\zz_n)$, with either $b> 0$, $b < -n-k+1$, or $b\in (-\zz_n)$, then 
$$\pFq{2}{2}{-n,  b+k}{ b, c}{x}\in \P_n(\rr_{\geq 0}),
$$
which follows from rows 2 and 3 of Table \ref{tab:2F2}, partially generalizes the statement of \cite[Theorem 2.2]{johnston2015quasi} (where it is claimed that at least $n-k$ roots are distinct and positive). Moreover, in the particular case of $k=1$, identity \eqref{2F2Real1} reads as  
$$\pFq{2}{2}{-n,  b+1}{ b, c}{x}=\pFq{1}{1}{-n}{c}{x}\boxtimes_n \pFq{2}{1}{-n,  b+1}{ b}{x} \in \P_n(\rr),$$
and we know that all roots are positive if $b\notin [-n,0]$. Moreover, by Remark \ref{rem.multiple.roots} we know that the resulting polynomial interlaces $\pFq{1}{1}{-n}{c}{x}$. These claims are precisely the content of \cite[Theorem 2.3]{johnston2015quasi}. Actually, from the interlacing property of the Laguerre polynomials we can also derive that for $0\leq t \leq 2$ the following interlacing holds:
$$\pFq{2}{2}{-n,  b+1}{ b, c}{x} \preccurlyeq \pFq{2}{2}{-n,  b+1}{ b, c+t}{x}.$$

With a similar procedure, for the case $k=2$ we can factorize 
$$\pFq{2}{2}{-n,  b+2}{ b, c}{x}:=\pFq{1}{1}{-n}{c}{x}\boxtimes_n \pFq{2}{1}{-n,  b+2}{ b}{x}.$$
The conclusions of \cite[Theorem 2.4]{johnston2015quasi} then follow from the properties of the ${}_2 F_1$ polynomial on the right and the multiplicative convolution.

\begin{remark}
    The results in \cite{johnston2015quasi} are based on the quasi-orthogonality property of the corresponding hypergeometric polynomials. Several examples show that a finite free convolution of (quasi-)orthogonal polynomials can generate families of orthogonal or multiple orthogonal polynomials. Understanding this property further is an interesting open problem in this field.
\end{remark}

To finish this section, we use Remark \ref{rem:pFq.reversed} to claim that ${}_3 F_1$ polynomials are just reciprocal ${}_2 F_2$ polynomials. To be more precise, a combination of parameters $(a, b_1, b_2)$ satisfying a condition of any row of Table \ref{tab:2F2} implies that 
\begin{equation}
    \label{reversed3F1}
    \pFq{3}{1}{-n, -  b_1-n+1,- b_2-n+1}{-  a-n+1}{x}
\end{equation}
is real-rooted. We illustrate this assertion in the following Corollary that is a result of taking the reciprocal polynomials from Proposition \ref{Prop:410new}:
\begin{corollary} 
\label{Cor:414new}
Let $n\ge 4$, $k \in \Z_n$, and $t\in \Z_n \cup\rr_{>n-2}$.

The polynomial
\begin{equation*}
    \pFq{3}{1}{-n,\ a_1,\ a_2}{\ b}{x} \in \P_n(\R_{<0})
\end{equation*}
if $a_2<-n+1$, and additionally, one of the following conditions holds:
\begin{enumerate}[(i)]
  \item  
   $ b=-n -k +1/2$ and
   either $a_1=-a_2-2n<-n+1$ or $a_1=-a_2-2n+1<-n+1$;

 \item  
    $ b= a_1 - k +1/2$, and 
    $a_1= (a_2-n-t)/2$;

 \item 
   $ b= (a_1-n)/2-k  $, and $a_1=2a_2+n+1-2t$;

\item  
   $ b= (a_1-n+1)/2-k$ and $a_1=2a_2+n -2t $.

\vspace{.2cm}

\hspace{-1.3cm} Moreover, the polynomial 
\begin{equation*}
    \pFq{3}{1}{-n,\ a_1,\ a_2}{\ b}{x} \in \P_n(\R )
\end{equation*}
if one of the following conditions holds:

\vspace{.2cm}

    \item  
    $ b= a_2+1/2 $,   
$a_1 = 2a_2+n +1$, and $a_2\in (-n ,-n+1) $;

   \item  
   $ b= a_2+1/2 $, $a_1 = 2a_2+n $, and $a_2\in (-n+1,-n+2) $;
   
  \item  
    $ b= a_2 - k +1/2$,  
$a_1 = 2a_2+n  +1$, and $a_2\in (-n,-n+1/2)$;

 \item  
   $ b=-n -k +1/2$, and $a_1+a_2+2n\in \{0, 1\}$, if $a_2\in (-n+1,-n+2)$.
\end{enumerate} 
\end{corollary}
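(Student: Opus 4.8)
The statement follows from Proposition~\ref{Prop:410new} by the reciprocal-polynomial correspondence recorded in Remark~\ref{rem:pFq.reversed} (which in turn rests on Lemma~\ref{lem:pFq.reversed}), so the plan is simply to dualize every case of that proposition. For the ${}_2\mathcal{F}_2$ family one has $i=1$ and $j=2$, hence $i+j=3$ is odd; by Remark~\ref{rem:pFq.reversed} this means that
$$q(x)=\pFq{3}{1}{-n,\ a_1,\ a_2}{b}{x} \qquad\text{and}\qquad p(x)=\pFq{2}{2}{-n,\ a}{b_1,\ b_2}{x}$$
are reciprocal polynomials (up to the reflection $x\mapsto -x$ and a nonzero constant) once we set $a:=-b-n+1$, $b_1:=-a_1-n+1$, $b_2:=-a_2-n+1$; consequently $p\in\P_n(\R\setminus\{0\})\Leftrightarrow q\in\P_n(\R\setminus\{0\})$ and, because $i+j$ is odd, $p\in\P_n(\R_{>0})\Leftrightarrow q\in\P_n(\R_{<0})$. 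Thus it suffices to substitute this dictionary into each of the eight cases \eqref{Prop:410new1}--\eqref{Prop:410new8} of Proposition~\ref{Prop:410new} and check that they turn into cases (i)--(viii) of the corollary, with the first four (coming from $p\in\P_n(\R_{>0})$) giving membership in $\P_n(\R_{<0})$ and the last four (from $p\in\P_n(\R)$) giving membership in $\P_n(\R)$.

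Carrying this out amounts to rewriting linear relations and interval constraints, which I would do case by case. For example, in case \eqref{Prop:410new1} the standing hypothesis $b_2>0$ becomes $a_2<-n+1$ (this is exactly the hypothesis attached to the first four cases of the corollary); the relation $a=k+1/2$ becomes $b=-n-k+1/2$; and $b_1=2-b_2>0$, respectively $b_1=1-b_2>0$, becomes $a_1=-a_2-2n<-n+1$, respectively $a_1=-a_2-2n+1<-n+1$, which is precisely case (i). The remaining seven cases are handled the same way: each equation relating $a$, $b_1$, $b_2$ is solved for the corresponding relation among $a_1$, $a_2$, $b$, and each interval constraint on $b_2$ (namely $b_2\in(0,1)$, $(-1,0)$, or $(1/2,1)$) is converted into the corresponding constraint on $a_2$ (namely $a_2\in(-n,-n+1)$, $(-n+1,-n+2)$, or $(-n,-n+1/2)$), reproducing (ii)--(viii).

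Since nothing analytic is involved beyond what is already in Proposition~\ref{Prop:410new} and Remark~\ref{rem:pFq.reversed}, the only genuine difficulty here is clerical: one must track the sign reversal carefully when passing from positive roots of $p$ to negative roots of $q$, and one should observe that in the degenerate situation where some $b_i\in(-\Z_n)$ --- so that $p$ vanishes at the origin and the reciprocal $q$ has degree strictly below $n$ --- the associated parameter $a_i$ also lies in $(-\Z_n)$, so the conclusion remains consistent with the standard-normalization convention \eqref{consequenceconnectionRandL2}; for the generic parameter values in the ranges of the corollary this does not happen and $q$ has degree exactly $n$.
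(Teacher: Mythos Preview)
Your proposal is correct and matches the paper's own approach exactly: the paper states (without a displayed proof) that the corollary is obtained by taking the reciprocal polynomials from Proposition~\ref{Prop:410new}, invoking Remark~\ref{rem:pFq.reversed}. Your write-up simply makes this explicit by recording the dictionary $a=-b-n+1$, $b_1=-a_1-n+1$, $b_2=-a_2-n+1$ and checking the case-by-case translation, which is precisely what the paper leaves to the reader.
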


We summarize some of our results on the real zeros of ${}_3 F_1$ polynomials in Table~\ref{tab:3F1}.

\begin{table}[ht]
    \centering
\begin{tabular}{|c|c|c|c|}
\hline $\mathbf{a_1}$ & $\mathbf{a_2}$& $\mathbf{b}$ & \textbf{Roots in} \\
\hline $ \R_{<-n+1}$ & $\R_{<-n+1}$& $ \R_{> 0}$ &  $\R_{> 0}$ \\
\hline  $ \R_{<-n+1}$ & $ \R_{<-n+1}$ & $  \R_{< a_1-n+2} $&  $\R_{< 0}$   \\
\hline $\R_{> b+n-2}$ & $\R_{<-n+1}$& $   \R_{>0}$& $\R_{<0}$  \\
\hline $\R_{<-n+2}\setminus \{- n+1\}$ & $\R_{<-n+1}$ & $(-1,0)  $&  $\mathbb{R}$\\
\hline $\R_{>b+n-2}$ & $\R_{<-n+1}$ &  $ (-1,0) $& $\mathbb{R}$  \\
\hline $(-n+1,-n+2)$ & $\R_{<-n+1}$ & $\R_{<a_1-n+2} \cup \R_{>0}$ & $\R$  \\  \hline 
\hline $\{-a_2-2n, -a_2-2n+1\} \cap \R_{<-n+1}$  & $\R_{<-n+1}$ & $-n -k +1/2$ 
& $\mathbb{R}_{< 0}$  \\
\hline $(a_2-n-t)/2$  & $\R_{<-n+1}$ & $a_1 - k +1/2$ 
& $\mathbb{R}_{< 0}$  \\
\hline $2a_2+n+1-2t$   & $ \R_{<-n+1}$ & $(a_1-n)/2-k$ 
& $\mathbb{R}_{< 0}$  \\
\hline $2a_2+n -2t$   & $ \R_{<-n+1}$ & $ (a_1-n+1)/2-k$ 
& $\mathbb{R}_{< 0}$  \\
\hline $ 2a_2+n +1 $  & $(-n ,-n+1) $  & $a_2+1/2 $ 
& $\mathbb{R} $  \\
\hline $ 2a_2+n  $  & $(-n+1 ,-n+2)$  & $a_2+1/2  $ 
& $\mathbb{R} $  \\
\hline $2a_2+n +1  $  & $(-n ,-n+1/2)$  & $a_2 - k +1/2$ 
& $\mathbb{R} $  \\
 \hline $-a_2-2n+j$, $j\in \{0,1\}$  & $(-n+1, -n+2)  $   & $ -n -k +1/2$ 
 & $\mathbb{R} $  \\
\hline
\end{tabular}
\vspace{2mm}
\caption{Real zeros of $\pFq{3}{1}{-n, \ a_1, \  a_2}{   b}{x}$. 
The first six rows are a consequence of using \eqref{reversed3F1} with the corresponding rows of Table \ref{tab:2F2}.  For the remaining rows, see Corollary \ref{Cor:414new}. In each appearance in the table above, $k\in \mathbb Z_n $ while $t\in \mathbb Z_n \cup  \rr_{>n-2}$. Recall that we always assume \eqref{assumptionsHGFNew}, so that $  a_i \notin (-\Z_n)$, and the polynomial is of degree exactly $n$. Moreover, a zero at $x=0$ appears only when $b \in (-\Z_n)$.   }
    \label{tab:3F1}
\end{table}

\subsection{${}_3 F_2$ Generalized hypergeometric polynomials} \label{sec:3F2real}

Several results on real-rootedness of ${}_3 F_2$ were obtained in the literature, in particular in \cite{Driver/Jordaan, MR2324318, johnston2006properties,johnston2015quasi}. In this section, we focus on how to establish some generalizations of these results using finite free multiplicative convolution (Theorem~\ref{thm:multiplicativeConvHG}).

As in the previous section, we factorize the ${}_3 \FF_2$ polynomial into the multiplicative convolution of two or more real-rooted polynomials, all but one of them with all roots of the same sign. Obviously, these representations are not unique and depend on how we partition the parameter space $\bm a \times \bm b$, with $\bm a=(a_1, a_2)$ and $\bm b=(b_1, b_2)$, into subsets. 

The most basic option is to represent $\bm a \times \bm b$ as the union of $(\{a_j\}, \{\cdot\})$ and $(\{ \cdot\} , \{ b_j\} )$, with $j=1, 2$, which produces a representation of the ${}_3 \FF_2$ polynomial as ${}_2 \FF_0\boxtimes {}_2 \FF_0\boxtimes {}_1 \FF_1\boxtimes {}_1 \FF_1$. To obtain real rooted polynomials, each parameter must satisfy rather restrictive conditions. In general, we can do better by using other partitions, namely:
\begin{itemize}
    \item $(\{a_1\}, \{b_1\}) \cup (\{a_2\}, \{b_2\})$, which produces a representation ${}_2 \FF_1 \boxtimes {}_2 \FF_1$;
    \item $(\{ a_1\}, \{b_1, b_2\}) \cup (\{a_2\}, \{\cdot\})$, which produces a representation ${}_2 \FF_2 \boxtimes {}_2 \FF_0$;
    \item $(\{ a_1, a_2\}, \{b_1\}) \cup (\{\cdot \}, \{b_2\})$, which produces a representation ${}_3 \FF_1 \boxtimes {}_1 \FF_1$.
\end{itemize}

Let us discuss these three partitions more systematically. 

\textbf{Case 1:} we can generate real-rooted polynomials by using the representation
\begin{equation} \label{3F2Real1}
    \pFq{3}{2}{-n,a_1,a_2}{b_1,b_2}{x}=\pFq{2}{1}{-n,a_1}{b_1}{x} \boxtimes_n \pFq{2}{1}{-n,a_2}{b_2}{x}
\end{equation}
and the entries of Table~\ref{tab:2F1}. Namely, we can combine the parameters of any row of Table~\ref{tab:2F1} with those in the rows corresponding to nonnegative (or nonpositive) zeros (rows 1--3). More precise information on the location of the zeros is obtained if we restrict ourselves to rows 1--3 only.

Since the methodology is straightforward, we are not going to provide a comprehensive list of the outcomes of such representations. Instead, we highlight some of the most interesting combinations of parameters $a_i$ and $ b_i$ for which the polynomial on the left-hand side of \eqref{3F2Real1} is real rooted, and point out how they generalize the already known results. Obviously, the roles of $a_1$ and $a_2$ (as well as $b_1$ and $b_2$) can be freely interchanged. 

For instance, by combining the major intervals of parameters in rows 1--3 from Table~\ref{tab:2F1}  we obtain six different domains of parameters for which the zeros of the polynomials are all real and have the same sign: 
\begin{proposition}
    \label{prop:zeros3F2case1cont}
Consider
 $$
 p(x)= \pFq{3}{2}{-n,a_1,a_2}{b_1,b_2}{x}.
 $$
    \begin{enumerate}[(i)]
        \item if $b_1, b_2>0$, $a_1< -n+1$ and $a_2> \min\{ b_1, b_2\} +n-2$ then $p\in \P_n(\R_{<0})$.  
        \item if $b_1, b_2>0$, $a_1> b_1 + n-2$ and $a_2> b_2+n-2$, then $p\in \P_n(\R_{> 0})$.  
        \item \label{eq.item3F2} if $b_1, b_2>0$, $a_1,a_2< -n+1$ then  $p\in \P_n(\R_{>0})$.  
        \item if $a_1, a_2<-n+1$, $b_1>0$ and $b_2< \min\{ a_1, a_2\} -n+2$ then $p\in \P_n(\R_{\leq 0})$.  
        \item if $a_1, a_2<-n+1$, $b_1< a_1 -n+2$ and $b_2< a_2 -n+2$ then $p\in \P_n(\R_{> 0})$.  
        \item if $a_1<-n+1$, $b_1< a_1 -n+2$, $b_2>0$ and $a_2> b_2+n-2$ then $p\in \P_n(\R_{> 0})$.    
    \end{enumerate}
\end{proposition}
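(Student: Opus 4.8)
\noindent\textit{Proof plan.}
The plan is to deduce everything from the factorization \eqref{3F2Real1} of Theorem~\ref{thm:multiplicativeConvHG}, which expresses $p$ as a finite free multiplicative convolution of two ${}_2\FF_1$ polynomials, combined with the sign information for ${}_2\FF_1$ polynomials collected in Table~\ref{tab:2F1} (specifically its first three rows) and the rule of signs for $\boxtimes_n$ recorded right after Proposition~\ref{prop:realrootedness}. The one point to keep in mind when applying \eqref{3F2Real1} is that the pairing of the numerator parameters with the denominator parameters is not forced: one has both
$$p(x)=\pFq{2}{1}{-n,a_1}{b_1}{x}\boxtimes_n \pFq{2}{1}{-n,a_2}{b_2}{x}=\pFq{2}{1}{-n,a_1}{b_2}{x}\boxtimes_n \pFq{2}{1}{-n,a_2}{b_1}{x},$$
and in items (i) and (iv), where the hypothesis involves $\min\{b_1,b_2\}$ resp.\ $\min\{a_1,a_2\}$, one selects the pairing that makes the relevant inequality hold.

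I would then dispatch the six cases, which correspond exactly to the six unordered pairs (with repetition) of rows from rows~1--3 of Table~\ref{tab:2F1}. In each case I identify, from the hypotheses, the row of Table~\ref{tab:2F1} to which each of the two ${}_2\FF_1$ factors belongs, hence the sign class ($\P_n(\R_{\le 0})$ from row~1, $\P_n(\R_{\ge 0})$ from rows~2 and~3) of its zeros, and then invoke the rule of signs: row~1 $\boxtimes_n$ row~1 lands in $\P_n(\R_{\ge 0})$ (this is (iii)); row~1 $\boxtimes_n$ row~2 and row~1 $\boxtimes_n$ row~3 land in $\P_n(\R_{\le 0})$ ((i) and (iv), respectively, with the pairings chosen as above); row~2 $\boxtimes_n$ row~2, row~2 $\boxtimes_n$ row~3, and row~3 $\boxtimes_n$ row~3 land in $\P_n(\R_{\ge 0})$ ((ii), (vi), and (v), respectively). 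Where the statement asserts strict inequality ($\R_{<0}$ or $\R_{>0}$), I would additionally note that under the hypotheses none of the denominator parameters of the two factors lies in $-\Z_n$, so by \eqref{leadingcoeffHGF} the coefficient $e_n$ of each factor, and hence of $p$ by \eqref{3F2Real1}, is nonzero; thus $0$ is not a root of $p$ and the sign inequality is strict.

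There is no deep step here; the argument is entirely bookkeeping. The place where care is genuinely needed is verifying that the hypotheses of each item really do place both ${}_2\FF_1$ factors into the claimed rows of Table~\ref{tab:2F1}: in particular, that the relevant parameters avoid $-\Z_n$, so that the factors have degree exactly $n$ (for instance, $a_i<-n+1$ forces $a_i\notin -\Z_n$, and a hypothesis such as $b_2<\min\{a_1,a_2\}-n+2$ forces $b_2<-n+1$, hence $b_2\notin -\Z_n$), and that the correct product of sign classes is read off from the rule of signs. Picking the wrong pairing in (i) or (iv), or misreading a row of Table~\ref{tab:2F1}, are the only real pitfalls. \qed
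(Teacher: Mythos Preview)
Your proposal is correct and matches the paper's approach exactly: the paper states the proposition as an immediate consequence of the factorization \eqref{3F2Real1} combined with rows~1--3 of Table~\ref{tab:2F1} and the rule of signs, noting (as you do) that the roles of $a_1,a_2$ and of $b_1,b_2$ may be freely interchanged. Your added remark about strictness via $e_n(p)\neq 0$ is a helpful detail the paper leaves implicit.
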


The result in \eqref{eq.item3F2} generalizes \cite[Theorem 9]{MR2324318}.  We summarize these six main regions in Table~\ref{tab:3F2Case1major}. 

\begin{table}[h]
    \centering
    \begin{tabular}{|c|c|c|c|c|}
\hline $\mathbf{a_1}$ & $\mathbf{a_2}$  & $ \mathbf{b_1}$  & $ \mathbf{b_2}$  & \textbf{Roots in} \\
\hline $ \R_{<-n+1}$ & $ \R_{>\min \{b_1, b_2 \}+n-2}$ & $\rr_{>0}$ & $\R_{> 0}$ & $\R_{< 0}$  \\ 
\hline $ \R_{>b_1+n-2}$ & $ \R_{> b_2  +n-2}$ & $\rr_{>0}$ & $\R_{> 0}$ & $\R_{> 0}$  \\ 
\hline $ \R_{<-n+1}$ & $\R_{<-n+1}$ & $\rr_{>0}$ & $\R_{> 0}$ & $\R_{> 0}$  \\ 
\hline $ \R_{<-n+1}$ & $ \R_{<-n+1}$ & $\rr_{>0}$ & $ \R_{<\min \{a_1, a_2 \}-n+2}$ & $\R_{< 0}$  \\ 
\hline $ \R_{<-n+1}$ & $ \R_{<-n+1}$ & $ \R_{<a_1-n+2}$ & $ \R_{<a_2-n+2}$ & $\R_{> 0}$  \\ 
\hline $ \R_{<-n+1}$ & $ \R_{> b_2  +n-2}$ & $ \R_{<a_1-n+2}$ & $\R_{> 0}$ & $\R_{> 0}$  \\ 
\hline
\end{tabular}
\vspace{2mm}
\caption{Real zeros of $\pFq{3}{2}{-n, \ a_1, \  a_2}{   b_1, \ b_2}{x}$, as described in Propositions \ref{prop:zeros3F2case1cont}.  Since in this case always $
  a_i \notin (-\Z_n)  $, all polynomials are of degree exactly $n$. 
 }
    \label{tab:3F2Case1major}
\end{table}

In addition to these six domains of parameters, we can obtain more admissible combinations using the remaining values from rows 1--3 in Table~\ref{tab:2F1}. Since the detailed explanation is tedious, our exposition here will be more schematic.

Namely, any assertion from Proposition \ref{prop:zeros3F2case1cont} (equivalently, any row from Table \ref{tab:3F2Case1major}) can be extended as follows:
\begin{enumerate} 
    \item[(R1)] by replacing condition $b_i>0$ by $b \in (-\Z_n)$ we arrive at the same conclusion but with possible roots at the origin. 
    \item[(R2)] by replacing condition $b_i<a_i-n+2$ (or equivalently, $a_i>b_i+n-2$) by $b_i\in \{a_i-1,a_i-2,\dots, a_i-n+1\}$ (or equivalently, $a_i \in \{ b_i+1,b_i+2,\dots, b_i+n-1\}$) we get the same conclusion as before. 
\end{enumerate}
This procedure can be iterated with the other set of parameters as long as the conditions are met. 

Finally, we can apply \eqref{3F2Real1}  combining the parameters from rows 1--3 in Table~\ref{tab:2F1} with those in rows 4--6. In this case, we get real-rooted polynomials, although we cannot guarantee that all the roots will be of the same sign.  Once again, any assertion of Proposition \ref{prop:zeros3F2case1cont} (or the one obtained after applying procedures (R1)--(R2)) can be extended as follows:
\begin{enumerate}
    \item[(R3)] by replacing condition  $b_i>0$ by  $b_i>-1$, or
    \item[(R4)] by replacing condition $a_i<-n+1$ by   $a_i<-n+2$,
\end{enumerate}
which yields real-rooted polynomials. Notice however that procedures (R3)--(R4) cannot be iterated: a second replacement amounts to finding the multiplicative convolution of two real-rooted polynomials (and not all roots are necessarily of the same sign), whose outcome is not determined a priori.  

Let us illustrate the considerations above by retrieving some previously known results.

By applying replacement (R2) to parameters from row 1 in Table \ref{tab:3F2Case1major} we conclude that 
$$
\pFq{3}{2}{-n,a_1,b_2+k}{b_1,b_2}{x} \in \P_n(\R_{<0}) \qquad \text{whenever } b_1, b_2>0,\text{ and }  a_1< -n+1.
$$
This result (together with row 1 in Table \ref{tab:3F2Case1major} itself)   generalizes \cite[Theorem 7]{MR2324318}.

Analogously, applying replacement (R2) to parameters from rows 2, 4--6 in Table \ref{tab:3F2Case1major} (and with an appropriate reparametrization) we obtain that for
$$
p(x)=\pFq{3}{2}{-n,a_1,c+k}{b_1,c}{x}, \quad k=1,\dots, n-1, 
$$
\begin{itemize}
    \item $p \in \P_n(\R_{>0}) $  whenever $ b_1,c >0$,  and  $a_1> b_1 + n-2$.
    \item $p \in \P_n(\R_{<0}) $ whenever $b_1>0$,  $c<-n-k+1$,  and $a_1< -n+1$.
    \item $p \in \P_n(\R_{>0})$ whenever $a_1< -n+1$, $c<-n-k+1$, and $b_1< a_1 - n+2$.
    \item $p \in \P_n(\R_{>0}) $ whenever $c >0$,  $a_1< -n+1$,   and    $b_1< a_1 - n+2$.
    \item $p \in \P_n(\R_{>0})$  whenever $b_1>0$,   $c< -n-k+1$,  and $a_1> b_1 + n-2$.
\end{itemize}
These results partially extend \cite[Corollary 3.2]{johnston2015quasi}.

\medskip

\textbf{Case 2:}  we consider the representation 
\begin{equation} \label{3F2Real2}
    \pFq{3}{2}{-n,\ a_1,\ a_2}{b_1,\ b_2}{x}=\pFq{2}{2}{-n,\ a_1}{b_1, \ b_2}{x} \boxtimes_n \pFq{2}{0}{-n,\ a_2}{\cdot }{x},
\end{equation}
combined with the results from Table~\ref{tab:2F2} and the Bessel polynomials, rows 5--6 from Table~\ref{tab:1F1}. Again, we need one of the polynomials to be real-rooted and the other one to have only nonnegative or only nonpositive roots.

Notice that the first 6 rows of Table~\ref{tab:2F2} were obtained using the representation of the ${}_2 F_2$ polynomials as ${}_2 \FF_1\boxtimes {}_1 \FF_1 $. Therefore, a combination of these rows with rows 5--6 of Table ~\ref{tab:1F1} is equivalent to the factorization ${}_2 \FF_1\boxtimes {}_1 \FF_1 \boxtimes {}_2 \FF_0$, which is already included in case 1. Hence, here we focus only on rows 7--15 of Table~\ref{tab:2F2} (Proposition \ref{Prop:410new}), which were obtained using additive convolution. For instance, a multiplicative convolution with row 5 of Table ~\ref{tab:1F1} yields:
\begin{proposition}
\label{prop:zeros3F2Case2Bis}
Let $n\ge 4$, $k \in \Z_n$, and $t\in \Z_n \cup\rr_{>n-2}$.

The polynomial
\begin{equation*}
    \pFq{3}{2}{-n,\ a_1, \ a_2 }{b_1,\ b_2}{x} \in \P_n(\R_{<0})
\end{equation*}
if $a_2<-n+1$, $b_2>0$, and additionally, one of the following conditions holds:
\begin{enumerate}[(i)]
  \item  
   $ a_1=  k + 1/2$ and
   either $b_1=2-b_2>0$ or $b_1=1-b_2>0$;

 \item   \label{iiOfPropzeros3F2Case2Bis}
    $ a_1= b_1 + k -1/2$, and 
$b_1 = (b_2+t+1)/2$;

 \item  
   $ a_1= (b_1+1)/2+k  $, and $b_1 =2(b_2-1+t)$;

\item  
   $ a_1= b_1 /2+k$ and $b_1 =2(b_2+t)-1$.

\vspace{.2cm}

\hspace{-1.3cm} The polynomial  
\begin{equation*}
    \pFq{3}{2}{-n,\ a_1, \ a_2}{b_1,\ b_2}{x} \in \P_n(\R)
\end{equation*}
if $a_2<-n+1$ and one of the following conditions holds:

\vspace{.2cm}

    \item   
    $ a_1= b_2  -1/2 $,   
$b_1 = 2b_2 -2$, and $b_2\in (0,1)$;

   \item  \label{viOfPropzeros3F2Case2Bis}
   $ a_1=  b_2-1/2$, $b_1=2b_2-1$, and $b_2\in (-1,0)$;
   
  \item  
    $ a_1= b_2 + k -1/2 $,  
$b_1 = 2b_2 -2$, and $b_2\in (1/2,1)$;

 \item  
   $ a_1=  k +1/2  $, and   $b_1  + b_2\in \{1,2\}$, if   $b_2\in (-1,0) $.
\end{enumerate}    
\end{proposition}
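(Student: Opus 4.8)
The plan is to obtain every assertion from the factorization
$$
\pFq{3}{2}{-n,\ a_1,\ a_2}{b_1,\ b_2}{x}=\pFq{2}{2}{-n,\ a_1}{b_1,\ b_2}{x} \boxtimes_n \pFq{2}{0}{-n,\ a_2}{\cdot }{x},
$$
recorded in \eqref{3F2Real2} as the particular case of Theorem~\ref{thm:multiplicativeConvHG} corresponding to the partition $(\{a_1\},\{b_1,b_2\})\cup(\{a_2\},\{\cdot\})$, together with the information already gathered about the two factors. For the second factor, the hypothesis $a_2<-n+1$ places us in row~5 of Table~\ref{tab:1F1}: the polynomial $\pFq{2}{0}{-n,\ a_2}{\cdot }{x}$ is a Bessel polynomial with all roots strictly negative, hence it belongs to $\P_n(\R_{<0})\subset\P_n(\R_{\le 0})$, has degree exactly $n$ (since $a_2\notin-\Z_n$), and does not vanish at the origin. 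For the first factor, observe that hypotheses (i)--(iv) of the present statement are literally hypotheses (i)--(iv) of Proposition~\ref{Prop:410new} with $a$ renamed as $a_1$, and since we also assume $b_2>0$ they give $\pFq{2}{2}{-n,\ a_1}{b_1,\ b_2}{x}\in\P_n(\R_{>0})$; likewise (v)--(viii) here are (v)--(viii) of Proposition~\ref{Prop:410new}, which give $\pFq{2}{2}{-n,\ a_1}{b_1,\ b_2}{x}\in\P_n(\R)$. In all cases this ${}_2\mathcal F_2$ factor has degree exactly $n$ as well, since $a_1\notin-\Z_n$.

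It then remains to combine the two factors using the zero-preservation results of Section~\ref{sec:realrootednessfreeconv}. In cases (i)--(iv) we are multiplying an element of $\P_n(\R_{\ge 0})$ by an element of $\P_n(\R_{\le 0})$, so by the ``rule of signs'' stated after Proposition~\ref{prop:realrootedness} the convolution lies in $\P_n(\R_{\le 0})$; since neither factor has a zero at $x=0$, neither does the product, and therefore $p\in\P_n(\R_{<0})$. In cases (v)--(viii) the first factor is only known to be real-rooted; here one replaces the Bessel factor $q(x):=\pFq{2}{0}{-n,\ a_2}{\cdot }{x}$ by $\widetilde q(x):=(-1)^n q(-x)\in\P_n(\R_{\ge 0})$, applies Proposition~\ref{prop:realrootedness}(ii) to get $\pFq{2}{2}{-n,\ a_1}{b_1,\ b_2}{x}\boxtimes_n\widetilde q(x)\in\P_n(\R)$, and uses the identity $r(x)\boxtimes_n q(-x)=[r\boxtimes_n q](-x)$ to conclude that $p$ itself is real-rooted.

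There is no genuine analytic difficulty here, since all the substantive work is already contained in Proposition~\ref{Prop:410new} (and, for the Bessel factor, in the elementary reduction \eqref{dualityLaguerre2} behind Table~\ref{tab:1F1}). The only steps requiring care are bookkeeping: checking that the prescribed parameter relations keep $a_1,a_2$ (and, implicitly, $b_1,b_2$) in the ranges for which the two factors genuinely have degree $n$ and the claimed sign pattern, and handling the passage between $\P_n(\R_{\le 0})$ and $\P_n(\R_{\ge 0})$ via $x\mapsto -x$ correctly in cases (v)--(viii); this last point is the one most likely to be mis-stated if one is not careful. The interchangeability of $b_1$ and $b_2$ noted after the statement is immediate, since these parameters enter the ${}_3\mathcal F_2$ polynomial (and the ${}_2\mathcal F_2$ factor) symmetrically.
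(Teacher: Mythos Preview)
Your proof is correct and is exactly the approach the paper takes: the proposition is stated there as the direct outcome of combining Proposition~\ref{Prop:410new} for the ${}_2\mathcal F_2$ factor with row~5 of Table~\ref{tab:1F1} for the Bessel factor via the factorization~\eqref{3F2Real2}, and you have spelled out the details (the rule of signs for (i)--(iv), the reflection $x\mapsto -x$ to invoke Proposition~\ref{prop:realrootedness}(ii) for (v)--(viii), and the nonvanishing at the origin) carefully and correctly.
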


Once again, we gather these domains of parameters in Table \ref{tab:3F2case2}. 
 
\begin{table}[hb]
    \centering
    \begin{tabular}{|c|c|c|c|c|}
    \hline $\mathbf{a}_1$ &  $\mathbf{a}_2$ & $\mathbf{b_1}$&$\mathbf{b_2}$ & \textbf{Roots in}  \\
\hline $k+1/2$  & $\mathbb{R}_{< -n+1}$ & $2-b_2>0$ or $1-b_2>0$ & $\mathbb{R}_{>0}$ 
& $\mathbb{R}_{< 0}$  \\
\hline $b_1+k-1/2$ & $\mathbb{R}_{< -n+1}$ & $(b_2+t+1)/2$ & $\mathbb{R}_{>0}$ 
& $\mathbb{R}_{< 0}$  \\
\hline $(b_1+1)/2+k$  & $\mathbb{R}_{< -n+1}$ & $ 2(b_2-1+t)$ & $\mathbb{R}_{>0}$ 
& $\mathbb{R}_{< 0}$  \\
\hline $b_1 /2+k$  & $\mathbb{R}_{< -n+1}$ & $ 2(b_2+t)-1$ & $\mathbb{R}_{>0}$ 
& $\mathbb{R}_{< 0}$  \\
\hline $b_2 -1/2$ & $\mathbb{R}_{< -n+1}$ & $2b_2-2$  & $(0,1 )$ 
& $\mathbb{R} $  \\
\hline $b_2 -1/2$ & $\mathbb{R}_{< -n+1}$ & $2b_2-1$  & $(-1, 0 )$ 
& $\mathbb{R} $  \\
\hline $b_2+k-1/2$ & $\mathbb{R}_{< -n+1}$ & $2b_2-2$  & $(1/2,1)$ 
& $\mathbb{R} $  \\
\hline $k+1/2$ & $\mathbb{R}_{< -n+1}$ & $1-b_2$ or $2-b_2$   & $ (-1,0)  $ 
& $\mathbb{R} $  \\
\hline
\end{tabular}
\vspace{2mm}
\caption{Real zeros of $\pFq{3}{2}{-n,\  a_1, a_2}{ b_1,\ b_2}{x}$, as described in Proposition \ref{prop:zeros3F2Case2Bis}. In each appearance in the table above, $k\in \mathbb Z_n $ while $t\in \mathbb Z_n \cup  \rr_{>n-2}$. Recall that we always assume \eqref{assumptionsHGFNew}, so $  a_i \notin (-\Z_n)$, and the polynomial is of degree exactly $n$. Moreover, a zero at $x=0$ appears only when $b_i \in (-\Z_n)$.
  }
    \label{tab:3F2case2}
\end{table}

A particular case of the assertions \eqref{viOfPropzeros3F2Case2Bis} and (\ref{iiOfPropzeros3F2Case2Bis}) (with $k=t=0$) of Proposition \ref{prop:zeros3F2Case2Bis} is a generalization of \cite[Theorem 8]{MR2324318}. Furthermore, taking   $b_1=b_2=1$ and $k=t=0$ in assertion  (\ref{iiOfPropzeros3F2Case2Bis})  of Proposition \ref{prop:zeros3F2Case2Bis}, we conclude that 
$$
p(x)=\pFq{3}{2}{-n,\ 1/2, \ -n\pm 1/2 }{1,\ 1}{x} \in \P_n(\R_{<0});
$$
the real-rootedness of this polynomial was conjectured by B.~Ringeling and W.~Zudilin\footnote{\, Personal communication.}.

\medskip

\textbf{Case 3: }  
we consider the representation 
\begin{equation} \label{3F2Real1times1F1}
    \pFq{3}{2}{-n,\ a_1,\ a_2}{b_1,\ b_2}{x}=\pFq{3}{1}{-n,\ a_1, \ a_2}{b_1}{x} \boxtimes_n \pFq{1}{1}{-n }{b_2 }{x},
\end{equation}
combined with the results from Table~\ref{tab:3F1} and the Laguerre polynomials, rows 2--3 from Table~\ref{tab:1F1}. Notice that if $b_2>0$, it is sufficient for the ${}_1F_1$ polynomial to be real rooted. On the other hand, using Remark \ref{rem:pFq.reversed} and taking the reciprocal polynomials for both terms in \eqref{3F2Real2} we can show that Case 3 is in a certain sense dual to Case 2. Thus, many of the results below can be derived alternatively using this duality. 

For instance, combining the results of Theorem \ref{prop:addingbelow} and Corollary \ref{Cor:414new} using \eqref{3F2Real1times1F1} we get
\begin{proposition}
\label{prop3F2Case3Newversion}
Let $n\ge 4$, $k \in \Z_n$, and $t\in \Z_n \cup\rr_{>n-2}$.

The polynomial
\begin{equation*}
    \pFq{3}{2}{-n,\ a_1,\ a_2}{ b_1, \ b_2}{x} \in \P_n(\R_{<0})
\end{equation*} 
if $a_2<-n+1$, $b_2>0$, and additionally, one of the following conditions holds:
\begin{enumerate}[(i)]
  \item  
   $ b_1=-n -k +1/2$ and
   either $a_1=-a_2-2n<-n+1$ or $a_1=-a_2-2n+1<-n+1$;

 \item  
    $ b_1= a_1 - k +1/2$, and 
    $a_1= (a_2-n-t)/2$;

 \item  
   $ b_1= (a_1-n)/2-k  $, and $a_1=2a_2+n+1-2t$;

\item  
   $ b_1= (a_1-n+1)/2-k$ and $a_1=2a_2+n -2t $.

\vspace{.2cm}

\hspace{-1.3cm} Moreover, the polynomial 
\begin{equation*}
    \pFq{3}{1}{-n,\ a_1,\ a_2}{\ b}{x} \in \P_n(\R )
\end{equation*}
if $b_2>0$ and one of the following conditions holds:

\vspace{.2cm}

    \item  
    $ b_1= a_2+1/2 $,    $a_1 = 2a_2+n +1$, and $a_2\in (-n ,-n+1) $;

   \item  
   $ b_1= a_2+1/2 $, $a_1 = 2a_2+n $, and $a_2\in (-n+1,-n+2) $;
   
  \item  
    $ b_1= a_2 - k +1/2$,  $a_1 = 2a_2+n  +1$, and $a_2\in (-n,-n+1/2)$;

 \item  
   $ b_1=-n -k +1/2$, and $a_1+a_2+2n\in \{0, 1\}$, if   $a_2\in (-n+1,-n+2)$.
\end{enumerate} 
\end{proposition}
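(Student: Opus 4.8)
The plan is to read the result off the multiplicative factorization \eqref{3F2Real1times1F1},
$$
\pFq{3}{2}{-n,\ a_1,\ a_2}{b_1,\ b_2}{x}=\pFq{3}{1}{-n,\ a_1, \ a_2}{b_1}{x} \boxtimes_n \pFq{1}{1}{-n }{b_2 }{x},
$$
which is an instance of Theorem~\ref{thm:multiplicativeConvHG}. Since $b_2>0$ throughout the statement, the second factor is, up to a nonzero constant, the Laguerre polynomial $L_n^{(b_2-1)}$ with $b_2-1>-1$, and by row~2 of Table~\ref{tab:1F1} it lies in $\P_n(\R_{>0})$; in particular it has no zero at the origin. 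Consequently, by Proposition~\ref{prop:realrootedness} the real-rootedness of the left-hand side is controlled by the ${}_3\mathcal F_1$ factor: if $\pFq{3}{1}{-n, a_1, a_2}{b_1}{x}\in\P_n(\R)$ then the product is in $\P_n(\R)$, and --- by the ``rule of signs'' recorded right after Proposition~\ref{prop:realrootedness} --- if moreover $\pFq{3}{1}{-n, a_1, a_2}{b_1}{x}\in\P_n(\R_{<0})$, then the product is in $\P_n(\R_{\le 0})$, and in fact in $\P_n(\R_{<0})$ because neither factor vanishes at $0$, so the product does not either.

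The second step is to notice that the eight hypotheses (i)--(viii) of the present proposition are, after the cosmetic substitution $b\mapsto b_1$, exactly the eight hypotheses (i)--(viii) of Corollary~\ref{Cor:414new} applied to $\pFq{3}{1}{-n, a_1, a_2}{b_1}{x}$, and the extra assumption ``$a_2<-n+1$'' in the first group coincides with the corresponding assumption in Corollary~\ref{Cor:414new}. Hence, under conditions (i)--(iv) together with $a_2<-n+1$, Corollary~\ref{Cor:414new} gives $\pFq{3}{1}{-n, a_1, a_2}{b_1}{x}\in\P_n(\R_{<0})$, so Step~1 yields $\pFq{3}{2}{-n, a_1, a_2}{b_1, b_2}{x}\in\P_n(\R_{<0})$; and under conditions (v)--(viii), Corollary~\ref{Cor:414new} gives $\pFq{3}{1}{-n, a_1, a_2}{b_1}{x}\in\P_n(\R)$, so Proposition~\ref{prop:realrootedness}(ii) yields $\pFq{3}{2}{-n, a_1, a_2}{b_1, b_2}{x}\in\P_n(\R)$, as claimed (the displayed polynomial in the second half of the statement should, of course, read $\pFq{3}{2}{-n, a_1, a_2}{b_1, b_2}{x}$). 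One also records the standing non-degeneracy checks: the ${}_3\mathcal F_1$ block must satisfy \eqref{assumptionsHGFNew}, i.e.\ $a_1,a_2\notin(-\Z_n)$, which is automatic in the first group from $a_2<-n+1$ and follows in the second group from the interval restrictions on $a_2$ together with the explicit expressions for $a_1$; and $b_1\notin\{0,-1,\dots,-n\}$, which holds in each regime because $b_1$ is a half-integer or lies in a prescribed open interval avoiding those integers.

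The main obstacle is bookkeeping rather than analysis: one must verify term by term that the parameter dictionary matches Corollary~\ref{Cor:414new} (which is precisely why the statement was phrased with these combinations), and that the standing hypotheses $a_2<-n+1$, $b_2>0$, and the interval constraints on $a_2$ are exactly what places each building block into the correct row of Table~\ref{tab:3F1} (for the ${}_3\mathcal F_1$ factor) and Table~\ref{tab:1F1} (for the Laguerre factor). No new estimates are needed; the proof rests entirely on the zero-preservation Proposition~\ref{prop:realrootedness} together with the already-established Corollary~\ref{Cor:414new} and Theorem~\ref{thm:multiplicativeConvHG}. One may additionally remark, as in Case~2, that the same statement can be derived from Proposition~\ref{prop:zeros3F2Case2Bis} by reciprocation (Remark~\ref{rem:pFq.reversed}), which serves as a consistency check.
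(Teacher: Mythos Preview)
Your proposal is correct and follows essentially the same route as the paper: the paper states the result as an immediate consequence of combining Corollary~\ref{Cor:414new} with Theorem~\ref{prop:addingbelow} via the factorization~\eqref{3F2Real1times1F1}, which is exactly what you do (invoking Proposition~\ref{prop:realrootedness} directly rather than through Theorem~\ref{prop:addingbelow} amounts to the same thing). Your additional non-degeneracy checks and the observation that the displayed ${}_3\mathcal F_1$ in the second half is a typo for ${}_3\mathcal F_2$ are welcome clarifications.
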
 

We summarize these results in Table~\ref{tab:3F2case3}. 

\begin{table}[hb]
    \centering
    \begin{tabular}{|c|c|c|c|c|}
    \hline $\mathbf{a}_1$ &  $\mathbf{a}_2$ & $\mathbf{b_1}$&$\mathbf{b_2}$ & \textbf{Roots in}  \\
\hline $-a_2-2n+j<-n+1$     & $\mathbb{R}_{< -n+1}$ & $-n-k+1/2$ & $\mathbb{R}_{>0}$ 
& $\mathbb{R}_{< 0}$  \\
\hline $(a_2-n-t)/2$  & $\mathbb{R}_{< -n+1}$ & $a_1-k+1/2$ & $\mathbb{R}_{>0}$ 
& $\mathbb{R}_{< 0}$  \\
\hline $ 2a_2+n+1-2t$  & $\mathbb{R}_{< -n+1}$ & $(a_1-n)/2-k$  & $\mathbb{R}_{>0}$ 
& $\mathbb{R}_{< 0}$  \\
\hline $ 2a_2+n-2t$ & $\mathbb{R}_{< -n+1}$ & $(a_1-n+1)/2-k$  & $\mathbb{R}_{>0}$ 
& $\mathbb{R}_{< 0}$  \\
\hline $2a_2+n+1$ & $(-n, -n+1)$ & $a_2 +1/2$  & $\mathbb{R}_{>0}$ 
& $\mathbb{R} $  \\
\hline $2a_2+n$ & $(-n+1, -n+2)$ &  $a_2 +1/2$   & $\mathbb{R}_{>0}$ 
& $\mathbb{R} $  \\
\hline $2a_2+n+1$ & $(-n, -n+1/2)$ & $a_2-k+1/2$   & $\mathbb{R}_{>0}$ 
& $\mathbb{R} $  \\
 \hline $-a_2-2n+j$   & $(-n+1, -n+2)$ & $-n-k+1/2$    & $ \mathbb{R}_{>0}$ 
 & $\mathbb{R} $  \\
\hline
\end{tabular}
\vspace{2mm}
\caption{Real zeros of $\pFq{3}{2}{-n,\  a_1,\ a_2}{ b_1,\ b_2}{x}$, as described in Proposition \ref{prop3F2Case3Newversion}. In each appearance in the table above, $k\in \mathbb Z_n $, $j\in \{0,1\}$, and $t\in \mathbb Z_n \cup  \rr_{>n-2}$. Recall that we always assume \eqref{assumptionsHGFNew}, so that $  a_i \notin (-\Z_n)$, and the polynomial is of degree exactly $n$. Moreover, a zero at $x=0$ appears only when $b_i \in (-\Z_n)$.
  }
    \label{tab:3F2case3}
\end{table}

As we have mentioned a few times, we do not claim that our approach is universal, and there are several results in the literature that we do not yet know how to prove using our method. To bridge this gap, we could try to use a combination of additive and multiplicative convolutions, similar to what was done in Proposition \ref{Prop:410new}. However, we have not been able to find new combinations of parameters that lead to real-rooted families using this idea.

Also, it is tempting to use the identity 
\begin{equation}
    \label{multIdentityCancellation3}
    \pFq{3}{2}{-n,\ a_1, \ a_2}{b_1,\ b_2}{x}=\pFq{3}{2}{-n,\ a_1, \ a_3}{b_1, \ b_2}{x} \boxtimes_n \pFq{2}{1}{-n, \ a_1 }{a_3 }{x},
\end{equation}
to extend previous assertions to some other combination of parameters. Nevertheless, at this stage, it does not lead to new results. The reason is that all real-rooted  ${}_3\mathcal{F}_2$ polynomials obtained so far are a result of the multiplicative convolution of more elementary ``blocks'', so that by replacing a parameter in a factor we get another factorization that was already considered.

However, we can use \eqref{multIdentityCancellation3} to extend some results in the literature to wider regions of parameters. We finish this section by illustrating it with some examples from \cite{Driver/Jordaan}.

For example, in \cite[Theorem 3.6]{Driver/Jordaan} it was proved  that
$$
\pFq{3}{2}{-n,\ n+1, \ 1/2}{b_1,\ 2-b_1}{x} \in \P_n(0,1)
$$
if $b_1\in (0,2)$. A multiplicative convolution with the polynomial
$$
\pFq{2}{1}{-n, \ a }{n+1 }{x}\in \P_n(\R_{>0})  \quad \text{for } a>2n-1,
$$
satisfying conditions from Row 2 of Table \ref{tab:2F1}, shows that for $a>2n-1$, 
$$
\pFq{3}{2}{-n,\ a, \ 1/2}{b_1,\ 2-b_1}{x} \in \P_n(\R_{>0}).
$$

Analogously, by \cite[Theorem 3.3]{Driver/Jordaan},
$$
\pFq{3}{2}{-n,2b_1+n, \ b_1-1/2}{b_1,\ 2b_1-1}{x} \in \P_n(\R_{>0})
$$
if $b_1>0$. Thus, a multiplicative convolution with the polynomial
$$
\pFq{2}{1}{-n, \ a_1 }{2b_1+n }{x}\in \begin{cases}
     \P_n(\R_{<0})  &\text{for }    a_1 < -n+1, \\
    \P_n(\R_{>0})  &\text{for }    a_1>2(b_1+n-1),
     \end{cases}
$$
satisfying conditions from Rows 1 and 2 of Table \ref{tab:2F1}, shows that for $ a>2(b_1+n-1)$, 
$$
\pFq{3}{2}{-n,a_1, \ b_1-1/2}{b_1,\ 2b_1-1}{x} \in 
\begin{cases}
     \P_n(\R_{<0})  &\text{for }    a_1 < -n+1, \, b_1>0,\\
    \P_n(\R_{>0})  &\text{for }    a_1 >2(b_1+n-1), \, b_1>0.
     \end{cases}
$$
 
Finally, by \cite[Theorem 3.4]{Driver/Jordaan},
$$
\pFq{3}{2}{-n,  2b_1+n-1, \ a_2}{b_1,\ b_2}{x} \in \P_n(\R_{>0}) 
$$
if $a_2=b_1-1/2$, $b_2=2b_1-2$, and $b_1>1$. 
Taking a multiplicative convolution with the polynomial
$$
\pFq{2}{1}{-n, \ a_1 }{2b_1+n-1 }{x}\in \begin{cases}
     \P_n(\R_{<0})  &\text{for }    a_1 < -n+1, \\
    \P_n(\R_{>0})  &\text{for }    a_1 >2(b_1+n)-3,
     \end{cases}
$$
satisfying conditions from Rows 1 and 2 of Table \ref{tab:2F1}, it shows that
$$
\pFq{3}{2}{-n,\ a_1, \ a_2}{b_1,\ b_2}{x} \in \begin{cases}
     \P_n(\R_{<0})  &\text{for }    a_1 < -n+1, \\
    \P_n(\R_{>0})  &\text{for }    a_1>2(b_1+n)-3,
     \end{cases}
$$
again, if $a_2=b_1-1/2$, $b_2=2b_1-2$, and $b_1>1$.

We summarize these final results in Table~\ref{tab:3F2case3extension}. 

\begin{table}[h]
    \centering
    \begin{tabular}{|c|c|c|c|c|}
    \hline $\mathbf{a}_1$ &  $\mathbf{a}_2$ & $\mathbf{b_1}$&$\mathbf{b_2}$ & \textbf{Roots in}  \\
     \hline
$\R_{>2n-1} $ & $1/2$  & $(0,2)$ & $2-b_1$ & $\R_{>0} $  \\
    \hline $  \R_{>2(b_1+n-1)}$  & $ b_1-1/2$ & $\rr_{>0}$   & $2b_1-1$ & $\R_{>0}  $  \\ 
 \hline $  \R_{<-n+1}$  & $ b_1-1/2$ & $\rr_{>0}$   & $2b_1-1$ & $\R_{<0}  $  \\ 
  \hline $  \R_{>2(b_1+n)-3}$ & $ b_1-1/2$ & $\rr_{>1}$   & $2b_1-2$ & $\R_{>0}  $  \\ 
 \hline $  \R_{<-n+1}$ & $ b_1-1/2$  & $\rr_{>1}$   & $2b_1-2$ & $\R_{<0}  $  \\ 
 \hline
\end{tabular}
\vspace{2mm}
\caption{Some additional cases of real zeros of $\pFq{3}{2}{-n,\  a_1,\ a_2}{ b_1,\ b_2}{x}$.  Recall that we always assume \eqref{assumptionsHGFNew}, so that $
  a_i \notin (-\Z_n)$, and the polynomial is of degree exactly $n$.  
  }
    \label{tab:3F2case3extension}
\end{table}

\section{Finite free probability and asymptotics} \label{sec:free.prob.asymototics} 

\subsection{Free probability}
\label{ssec:free} \ 

The goal of this section is to briefly explain how we can recast the previous results in the framework of free probability.
Free probability is a theory that studies non-commutative random variables, and it is especially useful in the study of the spectra of large random matrices. 

There are several natural parallels between the commutative and non-commutative theories. Since the concept of independence in classical probability theory is commutative in nature,  it is replaced by the notion of ``freeness'' or free independence, which is better suited to non-commutative random variables. Moreover, the central operations are the free additive $\boxplus$ and the free multiplicative convolution $\boxtimes$ of the measures which naturally correspond to the sum and multiplication of free random variables. The study of the free convolutions $\boxplus$ and $\boxtimes$ can be addressed either using the original Voiculescu's analytic tools such as $R$-transform and $S$-transform, or by the combinatorial theory developed by Nica and Speicher that makes use of free cumulants and noncrossing set partitions. Throughout this section, we assume that the reader has some familiarity with the theory of free probability; the standard references are \cite{DanKenAndu} for the analytical perspective and \cite{nica2006lectures} for the combinatorics perspective.

The connection between the convolutions of polynomials and free probability (reason for the name of "finite free" convolutions) was first noticed by Marcus, Spielman, and Srivastava in \cite{MR4408504}, when they used Voiculescu's $R$-transform and $S$-transform to improve the bounds on the largest root of a convolution of two real-rooted polynomials. This connection was explored further in \cite{marcus}, where Marcus defined a finite $R$-transform and $S$-transform that are related to Voiculescu's transforms in the limit. Using finite free cumulants, Arizmendi and Perales \cite{arizmendi2018cumulants}, showed that finite free additive convolution becomes a free additive convolution. This was later proved for the multiplicative convolution by Arizmendi, Garza-Vargas and Perales \cite{arizmendi2021finite}.

There is a natural way to associate a probability measure with a polynomial: given a polynomial $p$ of degree $n$ and roots $\lambda_j(p)$, $j=1, \dots, n$ (not necessarily all distinct),  its (normalized) \textbf{zero counting measure} (also known in this context as the \textbf{empirical root distribution} of $p$) is
\begin{equation}
    \label{def:zerocountingmeasure}
    \mu (p):=\frac{1}{n}\sum_{j=1}^n \delta_{\lambda_j(p)},
\end{equation}
where $\delta_z$ is the Dirac delta (unit mass) placed at the point $z$. The corresponding moments of $\mu(p)$ (which we also call the moments of $p$, stretching the terminology a bit) are
$$
m_k(p):=\frac{1}{n}\sum_{j=1}^n \lambda_j^k(p)=\int x^k \, d\mu_p, \quad k=0, 1, 2,\dots
$$

As mentioned above, the connection between finite and standard free probability is revealed in the asymptotic regime, when we let the degree $n\to \infty$. We say that the sequence of polynomials $ \mathfrak p= \left(p_n\right)_{n=1}^{\infty}$ such that each $p_n$ is real-rooted and of degree exactly $n$ \textbf{(weakly) converges} (or converges in moments) if there is a probability measure $\nu(\mathfrak p)$ on $\R$ with all its moments finite such that
$$
\lim_{n\to\infty} m_k(p_n)=m_k(\nu (\mathfrak p)), \qquad  k= 0, 1,2,\dots
$$
Note that if the moment problem for $\nu (\mathfrak p)$ is determined, this implies the weak-* convergence of the sequence $\mu (p_n)$ to $\nu (\mathfrak p)$.

\begin{proposition}[Corollary 5.5 in \cite{arizmendi2018cumulants}, and Theorem 1.4 in \cite{arizmendi2021finite}]
    \label{prop:finiteAsymptotics}
    Let $\mathfrak p:=\left(p_n\right)_{n=1}^{\infty}$ and $\mathfrak  q:=\left(q_n\right)_{n=1}^{\infty}$ be two sequences of real-rooted polynomials as above, and let $\nu(\mathfrak p)$ and $\nu(\mathfrak q)$ be two compactly supported probability Borel measures on $\R$ such that $\mathfrak p$ (respectively, $\mathfrak q$) weakly converges to $\nu(\mathfrak p)$ (respectively, $\nu(\mathfrak q)$). Then
    \begin{enumerate}
        \item[(i)]  $\left(p_n \boxplus_n q_n\right)_{n=1}^{\infty}$ weakly converges to $\nu(\mathfrak p) \boxplus \nu(\mathfrak q)$.
        \item[(ii)] if, additionally, for all sufficiently large $n$, $p_n, q_n\subset \pp_n(\rr_{>0})$ then $\left(p_n \boxtimes_n q_n\right)_{n=1}^{\infty}$ weakly converges to $\nu(\mathfrak p) \boxtimes \nu(\mathfrak q)$.
    \end{enumerate}
\end{proposition}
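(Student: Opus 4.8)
The strategy is to pass everything through the \emph{finite free cumulants}, which were introduced for exactly this purpose. Recall from \cite{arizmendi2018cumulants} that to a (monic) polynomial $p$ of degree $n$ one attaches cumulants $\kappa_1^{(n)}(p),\dots,\kappa_n^{(n)}(p)$, obtained from the coefficients $e_k(p)$ (equivalently, from the moments $m_1(p),\dots,m_k(p)$) by a M\"obius-type inversion over the lattice of noncrossing partitions, carrying explicit $n$-dependent weights. Two structural identities drive the argument: additivity under $\boxplus_n$, namely $\kappa_k^{(n)}(p\boxplus_n q)=\kappa_k^{(n)}(p)+\kappa_k^{(n)}(q)$ for every $n\ge k$ \cite{arizmendi2018cumulants}; and the multiplicative law under $\boxtimes_n$ from \cite{arizmendi2021finite}, which expresses $\kappa_k^{(n)}(p\boxtimes_n q)$ through the cumulants of $p$ and $q$ summed over pairs of noncrossing partitions with weights that, as $n\to\infty$, degenerate to the Nica--Speicher formula for $\kappa_k(\mu\boxtimes\nu)$. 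Since the zero counting measure ignores the leading coefficient, one may take all polynomials monic throughout.

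First I would isolate the asymptotic lemma: if $(p_n)_{n}$ is a sequence of real-rooted polynomials with $\deg p_n=n$ that weakly converges to a compactly supported measure $\mu$, then for each fixed $k$,
$$
\lim_{n\to\infty}\kappa_k^{(n)}(p_n)=\kappa_k(\mu),
$$
the $k$-th free cumulant of $\mu$. Indeed, once $k$ is fixed, $\kappa_k^{(n)}(p_n)$ is a universal polynomial in $m_1(p_n),\dots,m_k(p_n)$ whose coefficients are rational in $n$ and converge, as $n\to\infty$, to the coefficients of the classical moment-to-free-cumulant polynomial (the normalizing ratios of factorials tend to $1$); since $m_j(p_n)\to m_j(\mu)$ for $j\le k$ by hypothesis, the claim follows by continuity. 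Running the same computation in reverse shows that $m_k(p_n)$ is, for fixed $k$, a polynomial with convergent coefficients in $\kappa_1^{(n)}(p_n),\dots,\kappa_k^{(n)}(p_n)$ whose limit is the classical free-cumulant-to-moment polynomial.

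Then I would assemble the two cases. \emph{Additive:} by additivity and the lemma, $\kappa_k^{(n)}(p_n\boxplus_n q_n)=\kappa_k^{(n)}(p_n)+\kappa_k^{(n)}(q_n)\to\kappa_k(\nu(\mathfrak p))+\kappa_k(\nu(\mathfrak q))=\kappa_k(\nu(\mathfrak p)\boxplus\nu(\mathfrak q))$ for each $k$; applying the reverse half of the lemma to the sequence $(p_n\boxplus_n q_n)_n$ gives $m_k(p_n\boxplus_n q_n)\to m_k(\nu(\mathfrak p)\boxplus\nu(\mathfrak q))$ for all $k$. Since $\nu(\mathfrak p)$ and $\nu(\mathfrak q)$ are compactly supported, so is $\nu(\mathfrak p)\boxplus\nu(\mathfrak q)$, hence it is determined by its moments and the moment convergence upgrades to weak convergence. \emph{Multiplicative:} one argues verbatim, using the multiplicative law of \cite{arizmendi2021finite} in place of additivity and invoking the hypothesis $p_n,q_n\in\pp_n(\rr_{>0})$, which guarantees that the first moments are positive (so the multiplicative cumulant/$S$-transform machinery is non-degenerate), that $\nu(\mathfrak p),\nu(\mathfrak q)$ are supported on $[0,\infty)$, and that $\nu(\mathfrak p)\boxtimes\nu(\mathfrak q)$ is well defined and compactly supported.

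The main obstacle is the asymptotic lemma, and within it the uniform control of the $n$-dependent combinatorial weights: one must check that, for fixed $k$, all of the finitely many weights occurring in the finite-cumulant formulas and in the multiplicative law converge to their classical counterparts. Once that uniformity is established, the rest is continuity of polynomials together with the standard fact that a compactly supported measure is moment-determined. An alternative route replaces the combinatorial bookkeeping with Marcus's finite $R$- and $S$-transforms \cite{marcus}, which converge to Voiculescu's transforms; the architecture of the proof is unchanged.
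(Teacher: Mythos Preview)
Your sketch is essentially the argument from the cited references \cite{arizmendi2018cumulants,arizmendi2021finite}, and it is correct in outline. Note, however, that the paper does not prove this proposition at all: it is simply quoted from the literature (as indicated in the proposition heading) and used as a black box, so there is no ``paper's own proof'' to compare against. Your reconstruction via finite free cumulants, their additivity under $\boxplus_n$, the multiplicative law under $\boxtimes_n$, and the convergence of the $n$-dependent weights to the classical free-cumulant formulas is precisely the route taken in those sources.
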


These results imply that, in the limit $n\to \infty$, we can replace the finite free convolution with the standard free convolution of measures. Thus, by combining this property with the results of this paper, we can systematically study the asymptotics of the root counting measures of families of hypergeometric polynomials. 

In the rest of this section, we illustrate these ideas in the simplest cases. A deeper analysis (in particular, with applications in approximation theory) is one of the goals of future work.

\subsection{Parameter rescaling}
\label{ssec:interpretation} \

In order to obtain nontrivial sequences of weakly converging hypergeometric polynomials, we need to allow the parameters $\bm a$ and $\bm b$ to depend on degree $n$. To simplify the presentation, we introduce the following notation:
\begin{notation}
Given $i,j,n\in \nn$, $\bm a =(a_1, \dots, a_i)\in \R^i$ and $\bm b =(b_1, \dots, b_j)\in \R^j$, we denote by $\HGP{n}{\bm b}{\bm a }(x)$ the unique monic polynomial of degree $n$ with coefficients in representation \eqref{monicP} given by
$$
e_k\left(\HGP{n}{\bm b}{\bm a}\right):=\binom{n}{k} \frac{\falling{\bm b n}{k}}{\falling{\bm a n}{k}}, \qquad \text{for }k=1,\dots,n.
$$
\end{notation}
In order to avoid indeterminacy, in this section we assume that
$$
a_s\not \in \left\{\tfrac{1}{n},\tfrac{2}{n}, \dots, \tfrac{n-1}{n}\right \}, \quad s=1, \dots, i.
$$

There is a direct connection of the polynomials we just introduced with the  hypergeometric polynomials in standard normalization:
\begin{equation}
\begin{split}
\HGP{n}{\bm b}{\bm a }(x) & =\frac{(-1)^n}{\falling{\bm a n}{n}} \pFq{i+1}{j}{-n, \bm a n -n+1}{\bm b n -n+1}{x} \\
& =\frac{(-1)^n\falling{\bm b n}{n}}{\falling{\bm a n}{n}} \HGF{i+1}{j}{-n, \bm a n -n+1}{\bm b n -n+1}{x} ,
\end{split}
\end{equation}
where $\bm c n -n+1$ means that we multiply each entry of $\bm c$ by $n$ and then  add $-n+1$. 

With the new notation, the simplest families of real rooted polynomials look as follows:

\begin{description}
    \item[Identity for the multiplicative convolution] $\HGP{n}{\bm a}{\bm a}(x)=\HGP{n}{-}{-}(x)=(x-1)^n$.

     \item[Identity for the additive convolution] $\HGP{n}{0}{\bm a}(x)=x^n$.

     \item[Laguerre polynomials] $\HGP{n}{b}{-}$,
     \begin{itemize}
         \item $\HGP{n}{b}{-}\in \P(\rr_{> 0})$ when $b>1-\frac{1}{n}$. 
        \item $\HGP{n}{b}{-}\in \P(\rr_{\geq 0})$ when $b\in\{\frac{1}{n},\frac{2}{n}, \dots, \frac{n-1}{n}\}$, with a multiplicity of $(1-b)n$ at $0$.
        \item $\HGP{n}{b}{-}\in \P(\rr)$ when $b\in (\frac{n-2}{n},\frac{n-1}{n})$.
     \end{itemize}

     \item[Bessel polynomials] $\HGP{n}{-}{a}$,
     \begin{itemize}
         \item $\HGP{n}{-}{a}\in \P(\rr_{< 0})$ when $a<0$.
        \item $\HGP{n}{-}{a}\in \P(\rr)$ when $a\in (0,\frac{1}{n})$.
     \end{itemize}

     \item[Jacobi polynomials] $\HGP{n}{b}{a}$,
     \begin{itemize}
         \item $\HGP{n}{b}{a}\in \P([0,1])$ when $b >1$ and $a>b+1$.
        \item $\HGP{n}{b}{a}\in \P(\rr_{<0})$ when $b>1$ and $a<0$.
        \item $\HGP{n}{b}{a}\in \P(\rr_{>0})$ when $a<0$ and $b<a-1$. 
     \end{itemize}
\end{description}
Take note that the case of Jacobi polynomials does not cover all the combination of parameters that lead to real-rooted polynomials; the reader is referred to Table \ref{tab:2F1} for further details. 

One combination that is particularly interesting corresponds to polynomials with only roots at 1 and 0:
$$
\HGP{n}{k/n}{1}(x) =(x-1)^{k}x^{n-k}, \qquad \text{for }k=0,1,2,\dots, n.
$$

\medskip

\begin{remark}
    In the realm of finite free probability, the Laguerre polynomials were first studied by Marcus \cite[Section 6.2.3]{marcus} using the finite $R$-transform and later in \cite{arizmendi2018cumulants} using finite free cumulants.  To our knowledge, the families of Bessel and Jacobi polynomials have not been studied in this context, except for some particular cases, such as Gegenbauer (or ultraspherical) polynomials that appeared in \cite[Section 6]{gribinski2022rectangular}. 
\end{remark}

\medskip

Notice that our previous results can be easily rewritten in the new notation. For instance, consider tuples $\bm a_1,\bm a_2,\bm a_3,\bm b_1,\bm b_2,\bm b_3 $ of sizes $i_1,i_2,i_3,j_1,j_2,j_3$, respectively, then two reciprocal polynomials from Remark \ref{rem:pFq.reversed} are of the form
\begin{equation}
\label{eq.reversed.freeprob}
\HGP{n}{\bm b}{\bm a }(x) \qquad\text{and}\qquad \HGP{n}{-\bm a+1-1/n}{-\bm b +1-1/n }\left((-1)^{i+j}x\right).  
\end{equation}

The multiplicative convolution (Theorem \ref{thm:multiplicativeConvHG}) works in exactly the same way:
\begin{equation}
\label{eq:multiplicative.rephrased}
\HGP{n}{\bm b_1}{\bm a_1 } \boxtimes_n \HGP{n}{\bm b_2}{\bm a_2 }=\HGP{n}{\bm b_1,\ \bm b_2 }{\bm a_1,\ \bm a_2}.
\end{equation}
And the additive convolution, specifically Corollary \ref{cor.additive.conv.as.poruduct.HG}, can be rephrased as follows:  assume that  the following factorization holds,
\begin{equation}
\label{eq:additive.rephrased.1}  
\HGF{j_1}{i_1}{-n\bm  b_1}{-n\bm a_1}{x}  \HGF{j_2}{i_2}{-n\bm b_2}{-n\bm a_2}{ x}=\HGF{j_3}{i_3}{-n\bm b_3}{-n\bm a_3}{x}.
\end{equation}
Then, considering the signs $s_l=(-1)^{i_l+j_l+1}$ for $l=1,2,3$, we have that
\begin{equation}
 \label{eq:additive.rephrased.2}   
\HGP{n}{\bm b_1}{\bm a_1 }(s_1 x) \boxplus_n \HGP{n}{\bm b_2}{\bm a_2 }(s_2 x) = \HGP{n}{\bm b_3}{\bm a_3 }(s_3 x).
\end{equation}

\medskip

The same applies to examples from Section \ref{sec:additive} that provide nontrivial cases with interesting interpretation from the point of view of finite free probability:
\begin{itemize}
\item From Example \ref{example37} it follows that 
$$
\HGP{n}{b_1}{-} \boxplus_n \HGP{n}{b_2}{-} = \HGP{n}{b_1+b_2}{-}, \qquad  b_1, b_2\in \rr.
$$

The Laguerre polynomials are one of the basic families of polynomials studied within the framework of finite free probability. This result is a direct consequence of the finite $R$-transform of the Laguerre polynomials calculated by Marcus \cite[Section 6.2.3]{marcus}. Equivalently, this result follows from the fact that all finite free cumulants of a Laguerre polynomial $\HGP{n}{b}{-}$ are all equal to $b$ \cite[Example 6.2]{arizmendi2018cumulants}. 

\item Example \ref{example38} we obtain
$$\HGP{n}{a_1+a_2-b}{-} \boxplus_n \HGP{n}{b-a_1,\ b-a_2}{b} = \HGP{n}{a_1,\ a_2}{b}, \qquad   a_1, a_2, b\in \rr.$$

\item Example \ref{example39} yields
$$\HGP{n}{a,\ b}{a+b-\tfrac{1}{2n}} ^{(\boxplus_n) 2} = \HGP{n}{2a,\ 2b,\ a+b}{a+b-\tfrac{1}{2n},\ 2a+2b}, \qquad  a, b\in \rr.$$

\item By Example \ref{exm:2F0.plus.2F0}, 
$$\HGP{n}{-}{2a} \boxplus_n \HGP{n}{-}{2b} = \text{Dil}_{4}\ \HGP{n}{a+b,\ a+b-\tfrac{1}{2n}}{2a,\ 2b,\ 2a+2b-\tfrac{1}{n}}, \qquad   a, b\in \rr,$$
where 
\begin{equation}
    \label{Defrescaling}
    \text{Dil}_{s} p(x):=s^n p(x/s), \quad s>0.
\end{equation}
\end{itemize}

In what follows, we derive some asymptotic formulas for the zero distribution of hypergeometric polynomials that can be represented in terms of some more elementary ``building blocks''. Thus, we start by discussing the zero distribution of the most basic sequences of real-rooted polynomials.

\subsection{Asymptotic results and new insights in free probability}\ 

Proposition \ref{prop:finiteAsymptotics} allows us to infer the asymptotic zero distribution of a sequence of polynomials that can be represented as a finite free convolution of simpler components, such as Laguerre, Bessel and Jacobi polynomials. We use the following notation for reparameterized polynomials:
\begin{align*}
\lag^{(a)}_n
&:= \text{Dil}_{1/n} \HGP{n}{a}{-} =\frac{1}{n^n} \HGP{n}{a}{-} (nx),\\
\bes^{(a)}_n(x)
&:= \text{Dil}_{n}  \HGP{n}{-}{a} =n^n \HGP{n}{-}{a} (x/n),\\
\jac^{(b,a)}_{n}(x)
&:= \HGP{n}{b}{a} (x), 
\end{align*}
where the dilation operator is defined in \eqref{Defrescaling}. 

Rescaling is needed in the case of Laguerre and Bessel polynomials, where otherwise the zeros would not be uniformly bounded (and weak compactness of the zero-counting measures is not guaranteed). With this definition, actually all three sequences, of Laguerre $\lag^{(b)}:=\left(\lag^{(b)}_n\right)_{n=1}^{\infty}$, Bessel $\bes^{(a)}:=\left(\bes^{(a)}_n\right)_{n=1}^{\infty}$ and Jacobi $\jac^{(b,a)}:=\left(\jac^{(b,a)}_n\right)_{n=1}^{\infty}$ polynomials, whenever real-rooted, are weakly converging. Their limiting measures are well known and can be computed using standard arguments from the theory of orthogonal polynomials:
\begin{itemize}
    \item Laguerre polynomials $\lag^{(b)}:=\left(\lag^{(b)}_n\right)_{n=1}^{\infty}$: for $b >1$, the limiting measure  is $\nu(\lag^{(b)})=\mu_{\text{MP}_b}$, the Marchenko-Pastur law with parameter $b$, which is an absolutely continuous probability measure on $[r_-, r_+]$, with
$$d\mu_{\text{MP}_b}= \frac{1}{2\pi} \frac{\sqrt{(r_+-x)(x-r_-)}}{x} dx, \qquad \text{where} \qquad r_\pm= b+1 \pm 2\sqrt{b}.$$
This distribution has been rediscovered many times and can be obtained from different perspectives: as the equilibrium measure on $\R_+$ in presence of an external field (see \cite[Ch.~IV]{MR1485778}), from the integral representation of the Laguerre polynomials \cite{MR1662685, MR1342385, MR1245742}, or from their differential equation \cite{MR1317299, MR1914357}. 

For $b\in (0,1)$, formulas \eqref{reductionLaguerre}--\eqref{HGFLaguerre} suggest that in the limit we get the Marchenko-Pastur distribution with an additional atom (mass point or Dirac delta) at $x=0$. This is indeed the case, as it can be easily established by either of the methods mentioned above. 

\item Bessel polynomials is $\bes^{(a)} :=\left(\bes^{(a)}_n\right)_{n=1}^{\infty}  $: for $a <0$,  the limiting measure $\mu_{\text{RMP}_a}$ is the reciprocal of a Marchenko-Pastur law of parameter $1-a$:
$$d\mu_{\text{RMP}_a}= \frac{-a}{2\pi} \frac{\sqrt{(r_+-x)(x-r_-)}}{x^2} dx, \qquad \text{where} \qquad r_\pm= \frac{1}{a-2 \pm 2\sqrt{1-a}},$$
which is a simple consequence of their connection with the Laguerre polynomials.

\item Jacobi polynomials $\jac^{(b,a)}  := \left(\jac^{(b,a)}_{n} \right)_{n=1}^\infty$: their asymptotic zero distribution $\mu_{b,a}:=\nu(\jac^{(b,a)})$ depends on which of the three major parameters regions  we are considering:
\begin{enumerate}[(J1)]
    \item When $b >1$ and $a>b+1$,  
\begin{equation*}
    d\mu_{b,a}=\frac{a}{4\pi}\frac{\sqrt{(r_+-x)(x-r_-)}}{x(1-x)}dx, \qquad \text{ where} \qquad  r_{\pm}=\left(\frac{\sqrt{a-b}\pm \sqrt{(a-1)b}}{a} \right)^2.
\end{equation*}

\item When $b >1$ and $a<0$,
\begin{equation*}
    d\mu_{b,a}=\frac{-a}{4\pi}\frac{x-1}{x}\sqrt{(r_+-x)(x-r_-)}dx, \qquad \text{ where} \qquad r_\pm=- \left(\frac{\sqrt{1-a}\mp \sqrt{b(b-a)}}{\sqrt{(1-a)(b-a)}\pm \sqrt{b}} \right)^2.
\end{equation*}

\item When $a<0$ and $b<a-1$. 
\begin{equation}
\label{eq.jacobi.limit.3}
    d\mu_{b,a}=\frac{-a x}{4\pi}\frac{\sqrt{(r_+-x)(x-r_-)}}{x-1}dx, \qquad \text{ where} \qquad r_\pm=\left(\frac{b-1}{\sqrt{(a-1)b}\mp \sqrt{a-b}} \right)^2.
\end{equation}
\end{enumerate}
\end{itemize}
As in the case of the Laguerre polynomials, these results follow considering either the weighed equilibrium problem for the logarithmic potential in an external field \cite[Ch.~IV]{MR1485778}), their integral representation \cite{MR1342385}, their differential equation \cite{MR1317299, MR1914357}, or even from their orthogonality relations \cite{MR2149265} combined with the Riemann-Hilbert method \cite{MR2124460}.

The distribution $\mu_{b,a}$ in case (J1) has already been studied in the realm of free probability in \cite[Definition 3.10]{yoshida2020remarks}.\footnote{\, We thank Katsunori Fujie and Yuki Ueda for mentioning to the third author this reference and the possible connection.} In that work, for $c,d>1$, the free beta distribution is given by $f\beta(c,d)=\mu_{c,c+d}$. This is hardly a surprise, since a direct consequence of equation \eqref{eq:multiplicative.rephrased} is the following identity:
$$\jac_n^{(c,c+d)}\boxtimes_n \lag_n^{(c+d)}= \lag_n^{(c)},
$$
which can be informally restated as that the Jacobi polynomials can be obtained as a quotient (in the free convolution sense) of two Laguerre polynomials.  By letting $n\to \infty$ we see that Yoshida's free beta distribution satisfies
$$f\beta(a,b) \boxtimes \text{MP}_{a+b}= \text{MP}_{a},$$
where as before, $\text{MP}_c$ denotes the Marchenko-Pastur distribution of parameter $c$. This is consistent with the fact that the free beta distribution can be obtained as a quotient of variables distributed according to a Marchenko-Pastur of different parameters.\footnote{\, We did not find an explicit formula but this is inferred implicitly in \cite{yoshida2020remarks} due to the relation between the free beta and the free beta prime distributions.}

In \cite{yoshida2020remarks}, the parameters are also allowed to be in the larger set $c,d>0$ (instead of $c,d>1$) with the additional condition that $c+d>1$, in which case, as the formulas \eqref{reductionJacobi1}--\eqref{reductionJacobi3new} for Jacobi polynomials suggest, the distribution can have atoms. Once again, this fact is rigorously established using any of the asymptotic methods mentioned above. 

After the previous discussion, one should be convinced that the Jacobi polynomials are the finite free analogue of the free beta distribution. This parallel can be observed also in random matrix theory. It is well known that the Hermite polynomials are tied to the study of the Gaussian Orthogonal Ensemble and the Laguerre polynomials are related to the real Wishart matrices (for a discussion of this in the realm of finite free probability see \cite[Section 5]{arizmendi2021finite}). In the same spirit, the Jacobi polynomials are related to the Jacobi ensembles, which are precisely those that can be constructed by taking the quotient of two Wishart ensembles. We refer the reader to \cite{collins2005product} for a detailed study of eigenvalues of Jacobi ensembles using Jacobi polynomials and free probability.  

\medskip

Our previous analysis combined with the results from Sections \ref{sec:convolutionHGP}--\ref{sec:compilation}, allows us to write the asymptotic zero distribution of diverse families of real-rooted hypergeometric polynomials in terms of free convolution of explicit distributions (Marchenko-Pastur, reciprocal Marchenko-Pastur, and Free Beta) enumerated above. We present some examples without going into explicit calculations, starting with the sequences of polynomials 
\begin{equation} \label{sequencerescaled}
    p:=\HGP{n}{ \bm b}{\bm a}, \quad \text{with } \bm a = \left( a_1,\dots,a_{i} \right) , \; \bm b = \left( b_1,\dots,b_{j} \right).
\end{equation} 

\begin{itemize}
    \item If $b_1,\dots,b_j>1$ and $a_1,\dots,a_{i}<0$, then the sequence \eqref{sequencerescaled} is weakly converging. By Theorem \ref{prop:42bis}, its asymptotic zero distribution can be written as
$$
\nu(p)=\mu_{\text{RMP}a_1}\boxtimes  \dots \boxtimes \mu_{\text{RMP}a_i}\boxtimes \mu_{\text{MP} b_1} \boxtimes \dots \boxtimes \mu_{\text{MP} b_j}.
$$

\item If $j\geq i$, $b_1,\dots,b_j>0$, and $a_1,\dots,a_{i}\in \rr$ such that $a_s\geq b_s+1$ for $s=1,\dots,i$, then the sequence \eqref{sequencerescaled} is weakly converging. By Theorem \ref{prop:42}, its asymptotic zero distribution can be written as
$$
\nu(p)=f\beta(b_1, a_1-b_1)\boxtimes  \dots \boxtimes f\beta(b_i, a_i-b_i)\boxtimes \mu_{\text{MP} b_{i+1}} \boxtimes  \dots \boxtimes \mu_{\text{MP} b_j}.
$$
\end{itemize}

Other examples of asymptotic distributions can be obtained from the multiplicative convolution discussed at the end of Section \ref{ssec:interpretation}:
\begin{itemize}
\item For $a_1, a_2, b\in \rr$ that satisfy the following conditions $a_1,a_2>1$, $b>a_1+1$, $b>a_2+1$, and $a_1+a_2-b>1$, we have the following relation between  real rooted Laguerre and Jacobi polynomials:
$$\HGP{n}{a_1+a_2-b}{-} \boxplus_n \left(\HGP{n}{b-a_1}{b} \boxtimes_n \HGP{n}{b-a_2}{-}\right) = (\HGP{n}{a_1}{b} \boxtimes_n \HGP{n}{a_2}{-}).$$

In the limit, this translates into a relation between the Marchenko-Pastur distribution and the free beta distribution:
$$\mu_{\text{MP}a_1+a_2-b} \boxplus (f\beta(b-a_1,a_1) \boxtimes \mu_{\text{MP}b-a_2}) = f\beta(a_1,b-a_1) \boxtimes \mu_{\text{MP}a_2}.$$

\item 
For $a, b\in \rr$ such that $a,b>1$, $a>b+1$, the following identity in terms of the real-rooted Laguerre and Jacobi polynomials holds
$$\left( \HGP{n}{a}{a+b-\tfrac{1}{2n}} \boxtimes_n \HGP{n}{b}{-}  \right) ^{(\boxplus_n) 2}  = \HGP{n}{2b}{a+b-\tfrac{1}{2n}} \boxtimes_n \HGP{n}{2a}{2a+2b} \boxtimes_n  \HGP{n}{a+b}{-}.$$

In the limit it becomes  
$$ 
(f\beta(a,b)\boxtimes \mu_{\text{MP}b} )^{\boxplus 2} = f\beta(2b,a-b) \boxtimes f\beta(2a,2b) \boxtimes \mu_{\text{MP}a+b}.
$$

\item For $a<0$ and $b<a-1$, we have the following relation between real-rooted Bessel and Jacobi polynomials:
$$\HGP{n}{-}{2a} \boxplus_n \HGP{n}{-}{2b} = \HGP{n}{a+b}{2a}\boxtimes_n \HGP{n}{a+b-\tfrac{1}{2n}}{2a+2b-\tfrac{1}{n}} \boxtimes_n \HGP{n}{-}{2b}.$$

Leaving $n\to \infty$ this yields the following identity:
$$\mu_{\text{RMP}a} \boxplus \mu_{\text{RMP}b} = \mu_{a+b,2a} \boxtimes \mu_{a+b, 2a+2b} \boxtimes \mu_{\text{RMP}2b},$$
where $\mu_{\text{RMP}c}$ stands for the reciprocal Marchenko-Pastur distribution of parameter $c$ and $\mu_{c,d}$ is the distribution obtained in Equation \eqref{eq.jacobi.limit.3}.
\end{itemize}

 \section*{Acknowledgments}

The first author was partially supported by Simons Foundation Collaboration Grants for Mathematicians (grant 710499).
He also acknowledges the support of the project PID2021-124472NB-I00, funded by MCIN/AEI/10.13039/501100011033 and by ``ERDF A way of making Europe'', as well as the support of of Junta de Andaluc\'{\i}a (research group FQM-229 and Instituto Interuniversitario Carlos I de F\'{\i}sica Te\'orica y Computacional). 

The third author was partially supported by the Simons Foundation via Michael Anshelevich's grant. He expresses his gratitude for the warm hospitality and stimulating atmosphere at Baylor University.  

This work has greatly benefited from our discussions with multiple colleagues: Octavio Arizmendi, Kathy Driver, Katsunori Fujie, Jorge Garza-Vargas, Kerstin Jordaan, Dmitry Karp, Vladimir Kostov, Ana Loureiro, Boris Shapiro,  Yuki Ueda, Wadim Zudilin, to mention a few (this list is incomplete). The discussions that originated this project started at a Brazos Analysis Seminar, and we are grateful to the organizers for giving us this opportunity.


 \end{document}